\documentclass[11pt]{amsart}
\usepackage{amsfonts} 
\usepackage{amssymb}
\usepackage{amsthm}
\usepackage{amsmath} 
\usepackage{mathrsfs}
\usepackage{mathtools}
\usepackage{dsfont}
\usepackage{esint}
\usepackage{ulem}
\usepackage{marginnote}
\usepackage{array}
\usepackage{algorithm}
\usepackage{algpseudocode}
\usepackage{soul}
\usepackage{comment}

\usepackage{booktabs} 

\usepackage{MnSymbol}

\usepackage{longtable}

\usepackage[all]{xy}

\usepackage[english]{babel} 
\usepackage[left=3.5cm,right=3.5cm,top=3.5cm,bottom=3cm,headsep=0.7cm]{geometry}
\usepackage{xargs}

\usepackage{csquotes}
\usepackage{stmaryrd}

\usepackage{pgf,tikz}
\usetikzlibrary{positioning, calc, arrows.meta}
\usetikzlibrary{arrows}
\usepackage{subcaption}
\usepackage{graphicx}
\usepackage[parfill]{parskip}

\usepackage{enumerate}
\usepackage{enumitem}

\usepackage{hyperref}
\usepackage[nameinlink]{cleveref}
\usepackage{mathtools}
\hypersetup{
	colorlinks,
	linkcolor={blue!80!black},
	citecolor={blue!80!black}
}

\theoremstyle{definition}
\newtheorem{definition}{Definition}[section]
\newtheorem{openproblem}{Open Problem}[section]

\theoremstyle{plain}
\newtheorem{theorem}{Theorem}[section]
\newtheorem{proposition}{Proposition}[section]
\newtheorem{lemma}{Lemma}[section]
\newtheorem{corollary}{Corollary}[section]

\theoremstyle{remark}
\newtheorem{remark}{Remark}[section]

\newcommand{\SCI}{\operatorname{SCI}}
\newcommand{\SCIG}{\operatorname{SCI}_{\mathrm G}}

\newcommand{\fq}{\mathrm{fq}}
\newcommand{\raw}{\mathrm{raw}}
\newcommand{\geom}{\mathrm{geom}}
\newcommand{\rep}{\mathrm{rep}}
\newcommand{\cont}{\mathrm{cont}}
\newcommand{\Bor}{\mathrm{Bor}}
\newcommand{\TTE}{\mathrm{TTE}}
\newcommand{\diag}{\mathrm{diag}}
\newcommand{\Graph}{\mathrm{graph}}
\newcommand{\gen}{\mathrm{gen}}
\newcommand{\Sp}{\sigma}

\newcommand{\cK}{\mathcal K}
\newcommand{\cO}{\mathcal O}

\newcommand{\bN}{\mathbb N}
\newcommand{\bC}{\mathbb C}
\newcommand{\bR}{\mathbb R}
\newcommand{\bQ}{\mathbb Q}

\newcommand{\leGfq}{\le_{\mathrm G,\fq}}
\newcommand{\lemfq}[1]{\le_{#1,\fq}}
\newcommand{\Nlemfq}[1]{\not{\le}_{#1,\fq}}

\newcommand{\Psd}{\mathcal P^{\sigma}_{J,\diag}}
\newcommand{\Psg}{\mathcal P^{\sigma}_{J,\gen}}
\newcommand{\Psr}{\mathcal P^{\sigma}_{J,\Graph}}
\newcommand{\Ppd}{\mathcal P^{\sigma_\varepsilon}_{J,\varepsilon,\diag}}
\newcommand{\Ppg}{\mathcal P^{\sigma_\varepsilon}_{J,\varepsilon,\gen}}
\newcommand{\Ppr}{\mathcal P^{\sigma_\varepsilon}_{J,\varepsilon,\Graph}}

\newcommand{\BorD}{\mathrm{Bor}_{D}}
\newcommand{\BorcD}{\mathrm{Bor}_{E,\vartheta;D=\mathrm{cont}}}
\newcommand{\Borfull}{\mathrm{Bor}_{\mathrm{full}}}
\newcommand{\GeomTTE}{\geom^{\TTE}}
\newcommand{\GeomCont}{\geom^{\cont}}
\newcommand{\RepTTE}{\rep^{\TTE}}
\newcommand{\RepCont}{\rep^{\cont}}

\title[\bf Finite-Query Collapse and Modal Exact Bases in the SCI Hierarchy]{\bf Finite-Query Collapse and Modal Exact Bases in the SCI Hierarchy}

\begin{document}
\author[C.~Sorg]{Christopher Sorg$^1$}
\address[C.~Sorg]{
	\textup{Chair for Theoretical computer science, mathematics, and operations research}
	\newline \indent
	\textup{Department of Computer Science} \newline \indent
	\textup{University of the Bundeswehr Munich}
	\newline \indent
	\textup{85577 Neubiberg, Germany}}

\email{{\href{mailto:chr.sorg@unibw.de}{\textcolor{blue}{\texttt{chr.sorg@unibw.de}}}}
}

\footnotetext[1]{Inf1, University of the Bundeswehr Munich, Werner-Heisenberg-Weg 39, 85577 Neubiberg, Germany}

\begin{abstract}
	We study the exact-basis problem for Solvability Complexity Index (SCI) computational problem families through finite-query transports. A raw finite-query reduction permits arbitrary encodings and finite transcript reconstructions, with only a continuous output decoder. For the Colbrook-Hansen (CH23) singleton-window spectral/pseudospectral block, this raw preorder collapses the expected two-source structure: the diagonal exact spectral and fixed-\(\varepsilon\) pseudospectral sources are raw- and continuous-finite-query equivalent, and, for computable \(\varepsilon\) under the evaluation-name representations, TTE-finite-query equivalent, so the six-problem ambient is raw-principal. 
	We then introduce modal finite-query preorders, whose admissibility conditions may restrict encodings, decoders, reconstructions, uniformity, and geometric naturality. We also characterize TTE finite-query transport as computable point transport with a uniform finite interface trace; after forgetting the trace this gives strong Weihrauch reducibility, and the implication is strict. 
	
	Under a CH23 geometric modality generated by representation inclusions, unitary and graph relabelings, and neutral stabilizations, the same ambient has exactly two minimal exact sources. This gives a calibrated reformulation of the exact-basis problem: natural SCI families should be classified by modality-indexed exact bases and refinement maps, not by one raw preorder alone.
\end{abstract}

\maketitle
\begin{center}\small
	\textbf{Keywords:} solvability complexity index; finite-query transport; degree theory; modal reducibility; operator spectra; pseudospectra; computable spectral theory; Weihrauch reducibility
\end{center}

\tableofcontents

\section{Introduction}
The Solvability Complexity Index (SCI) measures the number of successive limiting procedures required to solve a computational problem from finite information. In the raw type-\(G\) setting, an SCI computational problem is a tuple
\[
\mathcal P=(\Xi,\Omega,(\mathcal M,d),\Lambda),
\]
where \(\Omega\) is the instance set, \(\Lambda\) is the evaluation interface, and \(\Xi:\Omega\to \mathcal M\) is the target map; see \cite[Def.~2.4, 2.6, 2.10 and~2.19]{Sorg26Foundations}. The spectral SCI hierarchy was introduced and developed in connection with infinite-dimensional spectral computation, where sharp lower bounds express finite-information obstructions rather than algorithmic inefficiency, see \cite{hansen2011solvability,ben2015computing,ColbrookHansen22}.

For individual problems, exactness means $\SCIG(\mathcal P)=k$. For families, however, one must distinguish between the existence of a sharp witness, pointwise exactness of every member, and worst-case sharpness. The witness-sharpness framework of \cite{Sorg26Witness} addresses this distinction using finite-query evaluation reductions. In its raw form, a reduction $\mathcal S \leGfq \mathcal P$ consists of an encoding \(E\), a continuous decoder \(D\), and finite reconstructions of each target evaluation of \(E(A)\) from source evaluations of \(A\); see \cite[Def.~4.8]{Sorg26Witness}. Such reductions pull back type-\(G\) towers and hence preserve lower bounds, i.e.
\[
\mathcal S\leGfq \mathcal P \quad\Longrightarrow\quad \SCIG(\mathcal S) \le \SCIG(\mathcal P)
\]
by \cite[Lem.~4.9 and Thm.~4.10]{Sorg26Witness}. This naturally leads to an exact-basis problem: for a natural \(k\)-bounded ambient \(\mathcal U\), can the exact layer
\[
\mathcal O_k(\mathcal U)=\{\mathcal P\in \mathcal U : \SCIG(\mathcal P)=k\}
\]
be generated from a small set of canonical exact sources?

This paper shows that the raw finite-query preorder is not the final invariant for this question. The obstruction already appears in the CH23 singleton-window spectral
package. Fix a compact interval \(J\subset\mathbb R\) with non-empty interior and \(\varepsilon>0\). Let
\[
\mathcal P^\sigma_{J,\diag}
\]
be the diagonal singleton-window problem asking whether
\[
\sigma(A)\cap\{z\}=\varnothing
\]
for $z\in J$, and let
\[
\mathcal P^{\sigma_\varepsilon}_{J,\varepsilon,\diag}
\]
be the corresponding fixed-\(\varepsilon\) pseudospectral problem. These are singleton restrictions of the CH23 decision problems \(\Xi_3\) and \(\Xi_4\), whose sharp height
classification follows from \cite[Thm.~3.10 and Rem.~3.11]{ColbrookHansen22}. Geometrically, one expects exact spectrum and fixed-scale pseudospectrum to give different sources. Raw finite-query transport nevertheless collapses this distinction. We prove
\[
\mathcal P^\sigma_{J,\diag} \equiv_{\raw,\mathrm{fq}}
\mathcal P^{\sigma_\varepsilon}_{J,\varepsilon,\diag}.
\]
The proof constructs new diagonal target operators whose spectral accumulation patterns encode the source predicate. Consequently, the full six-problem CH23 ambient
\[
\mathcal U^\sharp_{2,J,\varepsilon} =
\{
\mathcal P^\sigma_{J,\diag}, \mathcal P^\sigma_{J,\gen}, \mathcal P^\sigma_{J,\Graph},
\mathcal P^{\sigma_\varepsilon}_{J,\varepsilon,\diag},
\mathcal P^{\sigma_\varepsilon}_{J,\varepsilon,\gen},
\mathcal P^{\sigma_\varepsilon}_{J,\varepsilon,\Graph}
\}
\]
is raw-principal, i.e.
\[
\operatorname{MinDeg}^{\raw}_2(\mathcal U^\sharp_{2,J,\varepsilon}) =
\{[\mathcal P^\sigma_{J,\diag}]_{\raw}\}.
\]
Thus the originally expected raw two-source theorem block is false.

The collapse is not merely caused by discontinuity. The same construction gives continuous finite-query transports and, under the natural evaluation-name representations, TTE-computable finite-query transports, i.e.
\[
\mathcal P^\sigma_{J,\diag} \equiv_{\cont,\mathrm{fq}}
\mathcal P^{\sigma_\varepsilon}_{J,\varepsilon,\diag},
\qquad
\mathcal P^\sigma_{J,\diag} \equiv_{\TTE,\mathrm{fq}}
\mathcal P^{\sigma_\varepsilon}_{J,\varepsilon,\diag}.
\]
Here the TTE assertion is made for computable \(\varepsilon\), or equivalently relative to \(\varepsilon\) if \(\varepsilon\) is treated as a fixed oracle parameter. $\TTE$ denotes a TTE-computable finite-query transport modality on SCI interfaces, not ordinary Weihrauch reducibility.  In \cref{subsec:TTEfqtransportsW} we prove a precise comparison theorem: TTE finite-query transport is equivalent to computable point transport with a uniform finite interface trace. Forgetting this trace yields a strong Weihrauch reduction between represented target maps, but the converse fails even when the represented target maps are strongly Weihrauch equivalent. This is the transport-side analogue of the uniformity issue in the comparison between SCI and Weihrauch theory, where raw type-\(G\) towers are not directly comparable with represented-space computability without a pure \(\mathcal R\)-\(\lim\) normal form; see \cite[Rem.~2.20, Def.~A.21 and Thm.~A.22]{Sorg26Foundations}. For background on represented spaces and Weihrauch reducibility, see e.g. \cite{weihrauch2000computable,BrattkaPauly2018,BrattkaGherardiPauly2021}.

We therefore introduce modal finite-query transports. A modality \(\mathfrak m\) specifies which encodings, decoders, finite transcript reconstructions, and uniformity or
naturality requirements are admissible. The raw, continuous, Borel, TTE, representation-preserving, and geometric preorders are examples. Under the CH23 geometric modality generated by representation inclusions, unitary relabelings, graph relabelings, and neutral stabilizations, the expected two-source structure is recovered:
\[
\operatorname{MinDeg}^{\geom^{\mathrm{CH23}}_{J,\varepsilon}}_2
(\mathcal U^\sharp_{2,J,\varepsilon}) =
\{
[\mathcal P^\sigma_{J,\diag}]_{\geom}, [\mathcal P^{\sigma_\varepsilon}_{J,\varepsilon,\diag}]_{\geom}
\}.
\]
Thus the same ambient is raw-principal but geometrically two-source. This is the central calibration result of the paper.

The final section reformulates the exact-basis problem accordingly. For raw-sound modalities
\[
\mathfrak m\preceq\raw,
\]
one obtains modal exact degrees, modal exact bases, and refinement maps
\[
\pi_{\mathfrak m\to\mathfrak n} : D^{\mathfrak m}_k(\mathcal U) \to D^{\mathfrak n}_k(\mathcal U),
\]
where $\mathfrak m\preceq\mathfrak n$. The corrected open problem to \cite[Open problem~1]{Sorg26Witness} is not to find one raw exact basis for every natural family, but to classify the modal degree profile
\[
\mathfrak m\longmapsto D^{\mathfrak m}_k(\mathcal U)
\]
and its compatible minimal-support data. Philosophically, the raw problem still has ordinary set-theoretic meaning, but computational and geometric meaning are modality-relative: TTE interprets the raw SCI object inside represented-space computability, while the geometric modality interprets it through natural operator-theoretic transformations. This framework-relative reading is compatible with Tarskian semantic truth, Carnap's distinction between internal questions and choice of framework, and Quine's warning against reducing meaning to a single equivalence relation, see \cite{Tarski1944,Carnap1950,quine2000two}.

\textbf{Structure of the paper.}
\Cref{sec:raw-layer} recalls the raw SCI layer and the original exact-basis boundary. \Cref{sec:CH23-test-case} introduces the CH23 singleton-window test case. \Cref{sec:raw-collapse} proves the raw collapse. \Cref{sec:modal-transports} develops modal finite-query transports and proves the trace comparison between TTE finite-query transport and strong Weihrauch reducibility. \Cref{sec:regularity-collapse} proves continuous and TTE finite-query collapse. \Cref{sec:modal-degree-machinery} develops modal exact-degree machinery. \Cref{sec:geometric-recovery} proves the geometric two-source theorem. \Cref{sec:corrected-open-problem} formulates the corrected modal exact-basis problem and gives an outlook.

\textbf{Notation.} Throughout, \(\mathbb N:=\{1,2,3,\dots\}\) and \(\mathbb N_0:=\{0,1,2,\dots\}\).

\section{The Raw SCI Layer And The Original Exact-Basis Boundary}\label{sec:raw-layer}
This section fixes the formal layer used in the rest of the paper.  The definitions are compatible with the foundational SCI framework of \cite{Sorg26Foundations} and the finite-query witness-sharpness framework of \cite{Sorg26Witness}. The main point is that the raw layer is extensional: it records finite dependence on evaluation data, but it does not by itself impose Type-2, Weihrauch, BSS, Borel, continuous, or geometric implementation requirements.

\subsection{SCI Computational Problems, General Algorithms, And Raw Type-G Towers}
\begin{definition}[SCI computational problem]\label{def:sci-problem}
	An SCI computational problem is a quadruple
	\[
	\mathcal P=(\Xi,\Omega,(\mathcal M,d),\Lambda),
	\]
	where \(\Omega\) is a set called the input class, \((\mathcal M,d)\) is an metric space called the output metric space, \(\Xi : \Omega\to \mathcal M\) is the target map, an \(\Lambda\) is an (complex-valued) evaluation interface. The usual consistency condition reads as
	\[
	\Xi(A)\ne \Xi(B) \quad\Longrightarrow\quad
	\exists f\in\Lambda\text{ such that } f(A)\ne f(B).
	\]
\end{definition}

Thus an SCI computational problem separates three pieces of data: the mathematical instances \(\Omega\), the information interface \(\Lambda\), and the target quantity \(\Xi\).  The consistency condition says that the interface is not allowed to identify two inputs with different target values. All later transport notions will act on this interface level.

The first algorithmic notion is deliberately weak. A general algorithm may choose its finite query set depending on the input, but once that finite transcript is fixed, both the output and the chosen query set must be determined by it.

\begin{definition}[General algorithm]\label{def:general-algorithm}
	Let \(\mathcal P=(\Xi,\Omega,(\mathcal M,d),\Lambda)\) be an SCI computational problem. A general algorithm for \(\mathcal P\) is a pair
	\[
	(\Gamma,\Lambda_\Gamma),
	\]
	where \(\Gamma:\Omega\to \mathcal M\) and \(\Lambda_\Gamma:\Omega\to[\Lambda]^{<\omega}\) assigns to each input a finite set of queried evaluations, such that for all \(A,B\in\Omega\)
	\begin{enumerate}[label=\textup{(G\arabic*)}]
		\item if \(f(B)=f(A)\) for every \(f\in\Lambda_\Gamma(A)\), then \(\Gamma(B)=\Gamma(A)\);
		\item if \(f(B)=f(A)\) for every \(f\in\Lambda_\Gamma(A)\), then \(\Lambda_\Gamma(B)=\Lambda_\Gamma(A)\).
	\end{enumerate}
\end{definition}

\begin{definition}[Raw type-G SCI]\label{def:raw-scig}
	Let \(\mathcal P=(\Xi,\Omega,(\mathcal M,d),\Lambda)\). A raw type-G tower of height \(0\) is a general algorithm \(\Gamma\) such that \(\Gamma(A)=\Xi(A)\) for all \(A\in\Omega\). For \(k\in \mathbb{N}\), a raw type-G tower of height \(k\) is a family of general algorithms
	\[
	\Gamma_{n_k,\ldots,n_1} : \Omega \to \mathcal M,
	\]
	where $n_1,\ldots,n_k)\in \bN^k$, such that, for every \(A\in\Omega\),
	\[
	\Xi(A) =
	\lim_{n_k\to\infty} \cdots \lim_{n_1\to\infty} \Gamma_{n_k,\ldots,n_1}(A)
	\]
	exists in \((\mathcal M,d)\). The raw type-G SCI is then defined by
	\[
	\SCIG(\mathcal P) := \min\{k\in\bN_0 : \mathcal P \text{ admits a raw type-G tower of height }k\},
	\]
	with value \(\infty\) if no such tower exists.
\end{definition}

The case \(k=0\) is exact finite-information solvability. Higher \(k\) allow successive semantic limits of such finite-information procedures. This is the raw type-\(G\) layer: it measures limiting depth, not computable implementability of the whole indexed tower.

\begin{remark}[Extensionality of the raw layer]\label{rem:raw-extensional}
	The raw type-G definitions impose finite-information dependence at the deepest algorithmic level, but they do not require the indexed tower table to be Type-2 computable, BSS-computable, Borel, continuous, or uniformly implemented. This is the source of the raw collapses below and precisely the extensionality issue emphasized in \cite[Rem.~2.20]{Sorg26Foundations}.
\end{remark}

\subsection{Raw Finite-Query Transport}
Finite-query transport is the reduction notion used to move lower bounds from one SCI computational problem to another. The target problem \(\mathcal P\) is allowed to be queried only through its interface, but each such target query must be reproducible from finitely many source queries.

\begin{definition}[Raw finite-query evaluation reduction]\label{def:raw-fq}
	Let
	\[
	\mathcal S=(\Psi,\Sigma,(\mathcal N,\rho),\Lambda_{\mathcal S}), \qquad
	\mathcal P=(\Xi,\Omega,(\mathcal M,d),\Lambda_{\mathcal P})
	\]
	be SCI computational problems. We write
	\[
	\mathcal S\le_{\raw,\fq} \mathcal P
	\]
	if there exist
	\begin{enumerate}[label=\textup{(F\arabic*)}]
		\item an encoding map
		\[
		E:\Sigma\to\Omega;
		\]
		\item a continuous decoder
		\[
		D:(\mathcal M,d) \to (\mathcal N,\rho);
		\]
		\item for every \(f\in\Lambda_{\mathcal P}\), \(m_f \in \mathbb{N}_0\), source evaluations
		\[
		\gamma_{f,1},\ldots,\gamma_{f,m_f}\in\Lambda_{\mathcal S},
		\]
		and a reconstruction map
		\[
		\vartheta_f: \operatorname{im}(\gamma_{f,1},\ldots,\gamma_{f,m_f}) \to \operatorname{im}(f),
		\]
		where for \(m_f=0\) the image of the set of source evaluations is the one-point set,
	\end{enumerate}
	such that for every \(A\in\Sigma\),
	\[
	D(\Xi(E(A)))=\Psi(A),
	\]
	and, for every \(f\in\Lambda_{\mathcal P}\),
	\[
	f(E(A)) = \vartheta_f\bigl( \gamma_{f,1}(A),\ldots,\gamma_{f,m_f}(A) \bigr).
	\]
	This is the zero-query-constant variant of the finite-query evaluation reduction of \cite[Def.~4.8]{Sorg26Witness}. In the notation of that paper this is \(\le_{\mathrm G,\fq}\). We write \(\le_{\raw,\fq}\) here to emphasize that this is the raw modality.
\end{definition}

Compared with \cite[Def.~4.8]{Sorg26Witness}, we allow \(m_f=0\) for constant reconstructions. This is equivalent to the original positive-query convention whenever
one fixes a dummy source evaluation; it is technically more convenient for the constant compact-window and zero off-diagonal reconstructions used below.

\begin{remark}[What is restricted in the raw preorder]\label{rem:raw-restrictions}
	In \cref{def:raw-fq}, only the decoder is required to be continuous. The encoding \(E\) is an arbitrary set-theoretic map, and the finite reconstruction maps \(\vartheta_f\) are arbitrary maps on finite transcript ranges. Thus raw finite-query transport is a finite-information coding relation, not a geometric or computability-theoretic reduction by itself.
\end{remark}

The distinction between the continuous decoder and the unrestricted encoding is central below. The decoder must commute with limits, which is why raw finite-query transport is
sound for type-\(G\) lower-bound transfer. The encoding, however, may still manufacture a new instance in a way that is not geometrically natural.

\subsection{Exact Layers And The Raw Exact-Basis Problem}
The raw exact-basis problem concerns the exact height-\(k\) part of an ambient class. The ambient \(\mathcal U\) should be thought of as a finite or structured package of related SCI computational problems, rather than a single problem.

\begin{definition}[Raw exact layer and raw exact degrees]\label{def:raw-exact-layer}
Let \(\mathcal U\) be a family of SCI computational problems, and let \(k\in\mathbb N\). The raw exact layer is
\[
\mathcal O_k(\mathcal U) := \{\mathcal P\in \mathcal U : \SCIG(\mathcal P)=k\}.
\]
For \(\mathcal P, \mathcal Q \in \mathcal O_k(\mathcal U)\), write
\[
\mathcal P\equiv_{\raw,\mathrm{fq}} \mathcal Q
\]
if
\[
\mathcal P\le_{\raw,\mathrm{fq}} \mathcal Q \quad\text{and}\quad \mathcal Q\le_{\raw,\mathrm{fq}} \mathcal P.
\]
The raw exact degree set is
\[
D^{\raw}_k(\mathcal U) := \mathcal O_k(\mathcal U) / {\equiv_{\raw,\mathrm{fq}}}.
\]
We write
\[
[\mathcal P]_{\raw}
\]
for the raw exact degree of \(\mathcal P\), and define
\[
[\mathcal P]_{\raw} \preceq [\mathcal Q]_{\raw} \quad:\Longleftrightarrow\quad \mathcal P\le_{\raw,\mathrm{fq}} \mathcal Q.
\]
\end{definition}

\begin{definition}[Raw minimal exact degrees]\label{def:raw-minimal-exact-degrees}
A degree $d\in D^{\raw}_k(\mathcal U)$ is raw-minimal if there is no $e\in D^{\raw}_k(\mathcal U)$ with $e\prec d$. The set of raw-minimal exact degrees is denoted by
\[
\operatorname{MinDeg}^{\raw}_k(\mathcal U).
\]
\end{definition}

If a single raw-minimal degree lies below every exact-height member of \(\mathcal U\), then \(\mathcal U\) is raw-principal. If several incomparable minimal degrees are needed, then \(\mathcal U\) has a non-principal exact basis. The CH23 example below shows that the raw answer can be coarser than the geometrically expected one.

\section{The CH23 Singleton-Window Spectral/Pseudospectral Test Case}\label{sec:CH23-test-case}
We now introduce the test case that exposes the limitation of the raw preorder. It is small enough to analyze completely, but it comes from the work in \cite{ColbrookHansen22}.

\subsection{The Diagonal Singleton-Window Sources}
We recall the diagonal class \(\Omega_D\), which is the diagonal subclass of the \(\ell^2(\mathbb N)\) operator class from \cite[Sec.~3.2.1]{ColbrookHansen22}.

\begin{definition}[Diagonal operator class]\label{def:diag-class}
	Let \(\Omega_{\diag}\) be the class of maximal closed diagonal operators on \(\ell^2(\mathbb N)\). Thus \(A\in\Omega_{\diag}\) means that there is a sequence
	\[
	a=(a_j)_{j\in\mathbb N}\subseteq \mathbb C
	\]
	such that
	\[
	\mathcal D(A) =
	\left\{
	x=(x_j)_{j\in\mathbb N}\in\ell^2(\mathbb N) : (a_jx_j)_{j\in\mathbb N}\in\ell^2(\mathbb N)
	\right\},
	\]
	and
	\[
	(Ax)_j=a_jx_j.
	\]
	Equivalently,
	\[
	Ae_j=a_je_j
	\]
	for $j\in\mathbb N$. The diagonal coefficient evaluations are
	\[
	\mu_j(A):=a_j.
	\]
	For compatibility with matrix-entry interfaces one may also include
	\[
	\mu_{ij}(A):=\langle Ae_j,e_i\rangle
	=
	\begin{cases}
		a_j,&i=j,\\
		0,&i\neq j.
	\end{cases}
	\]
\end{definition}

This maximal diagonal convention ensures that spectral accumulation of the diagonal entries is part of the spectrum. That accumulation phenomenon is exactly what the raw collapse construction will exploit.

\begin{definition}[Singleton windows]\label{def:singleton-windows}
	Fix a compact interval \(J\subset\bR\) with nonempty interior.  Let
	\[
	\cK_{\mathrm{sgl}}(J)
	:=
	\{\{z\}:z\in J\}.
	\]
	For each \(K=\{z\}\in\cK_{\mathrm{sgl}}(J)\), fix rational approximants
	\[
	r_n(K)\in\bQ+i\bQ, \qquad
	|r_n(K)-z|\le 2^{-(n+1)}.
	\]
	The compact-window evaluations are
	\[
	\rho_n(A,K):=r_n(K).
	\]
\end{definition}

The compact input is restricted to singletons only to isolate the sharpest local spectral-window question. The rational approximants \(r_n(K)\) are part of the represented finite information about the singleton window.

There are now two decision problems on the same diagonal instance space and with the same compressed interface. They differ only in the target: exact spectral intersection versus fixed-\(\varepsilon\) pseudospectral intersection.

\begin{definition}[Diagonal spectral and pseudospectral singleton problems]\label{def:diag-problems}
	The diagonal singleton exact spectral problem is
	\[
	\Psd :=
	(\Xi_\sigma,\Omega_{\diag}\times\cK_{\mathrm{sgl}}(J), (\{0,1\},d_{\mathrm{disc}}),\Lambda_{\diag}),
	\]
	where \(1\) means ``Yes'' and
	\[
	\Xi_\sigma(A,K) :=
	\begin{cases}
		1,&\Sp(A)\cap K=\varnothing,\\
		0,&\Sp(A)\cap K\ne\varnothing.
	\end{cases}
	\]
	For fixed \(\varepsilon>0\), the diagonal singleton fixed-\(\varepsilon\) pseudospectral problem is
	\[
	\Ppd :=
	(\Xi_{\sigma_\varepsilon},\Omega_{\diag}\times\cK_{\mathrm{sgl}}(J), (\{0,1\},d_{\mathrm{disc}}),\Lambda_{\diag}),
	\]
	where
	\[
	\Xi_{\sigma_\varepsilon}(A,K) :=
	\begin{cases}
		1,&\Sp_\varepsilon(A)\cap K=\varnothing,\\
		0,&\Sp_\varepsilon(A)\cap K\ne\varnothing.
	\end{cases}
	\]
	The diagonal interface used for these two diagonal singleton problems is the compressed interface
	\[
	\Lambda_{\diag} := \{\mu_j:j\in\mathbb N\} \cup \{\rho_n : n\in\mathbb N\},
	\]
	where
	\[
	\mu_j(A,K):=a_j \quad\text{if}\quad Ae_j=a_je_j,
	\]
	and
	\[
	\rho_n(A,K):=r_n(K).
	\]
	When a diagonal operator is transported into a general or graph CH23 representation, the additional target-side matrix-entry, support, or dispersion evaluations are
	reconstructed from this compressed diagonal interface by constants and diagonal coefficient queries. These are the singleton-window restrictions of the decision problems \(\Xi_3\) and \(\Xi_4\) from \cite[Thm.~3.10]{ColbrookHansen22}.
\end{definition}

The following elementary diagonal test translates both decision problems into distance conditions on the diagonal values. It is the local spectral calculation underlying all collapse and separation arguments in the paper.

Throughout the CH23 singleton-window block we use the closed pseudospectrum convention of \cite[§3, p.~9]{ColbrookHansen22},
\[
\sigma_\varepsilon(A) = \overline{\{z\in\mathbb C:\|(A-zI)^{-1}\|>\varepsilon^{-1}\}} .
\]
For comparison, the open matrix convention and its normal-matrix formula are discussed in \cite[§2, pp.~13-19, (2.1)-(2.17), Thm.~2.2]{trefethen2020spectra}, while
the operator definitions are given in \cite[§4, p.~31, (4.3)-(4.5), Thm.~4.3]{trefethen2020spectra}.

\begin{lemma}[Diagonal spectral and pseudospectral tests]\label{lem:diag-tests}
	Let \(A\in\Omega_{\diag}\), and write
	\[
	Ae_j=a_je_j.
	\]
	Let
	\[
	K=\{z\}\subseteq \mathbb C.
	\]
	Then
	\[
	\sigma(A)=\overline{\{a_j:j\in\mathbb N\}},
	\]
	and therefore
	\[
	\sigma(A)\cap K\neq\varnothing \quad\Longleftrightarrow\quad \inf_{j\in\mathbb N}|a_j-z|=0.
	\]
	Moreover, for every \(\varepsilon>0\),
	\[
	\sigma_\varepsilon(A)\cap K\neq\varnothing \quad\Longleftrightarrow\quad \inf_{j\in\mathbb N}|a_j-z|\le\varepsilon.
	\]
	Equivalently,
	\[
	\sigma_\varepsilon(A)\cap K=\varnothing \quad\Longleftrightarrow\quad \inf_{j\in\mathbb N}|a_j-z|>\varepsilon.
	\]
\end{lemma}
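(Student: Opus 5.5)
We first compute the spectrum of a maximal diagonal operator, then derive the resolvent norm formula, and finally translate both into distance conditions.

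\textbf{Spectrum.} Fix \(z\in\mathbb C\) and set \(\delta(z):=\inf_{j}|a_j-z|\).

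Suppose \(\delta(z)>0\). Define the bounded diagonal operator \(R\) by \(Re_j=(a_j-z)^{-1}e_j\); then \(\|R\|=\sup_j|a_j-z|^{-1}=\delta(z)^{-1}\). We check that \(R\) is a two-sided inverse of \(A-zI\) on the correct domain. For \(y\in\ell^2\), the sequence \(a_j(Ry)_j=y_j+z(Ry)_j\) lies in \(\ell^2\), so \(Ry\in\mathcal D(A)\) and \((A-zI)Ry=y\). Conversely, \(R(A-zI)x=x\) for \(x\in\mathcal D(A)\). Hence \(z\in\rho(A)\) and \(\|(A-zI)^{-1}\|=\delta(z)^{-1}\).

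Suppose \(\delta(z)=0\). If \(z=a_j\) for some \(j\), then \(e_j\) lies in the kernel of \(A-zI\). Otherwise pick \(j_k\) with \(|a_{j_k}-z|\to0\); the unit vectors \(e_{j_k}\) satisfy \(\|(A-zI)e_{j_k}\|\to0\), so \(A-zI\) has no bounded inverse. Hence \(z\in\sigma(A)\).

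Together these give \(\sigma(A)=\{z:\delta(z)=0\}=\overline{\{a_j\}}\). The singleton statement follows immediately, since \(\sigma(A)\cap\{z\}\neq\varnothing\) iff \(z\in\sigma(A)\) iff \(\delta(z)=0\).

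\textbf{Pseudospectrum.} Use the convention \(\|(A-wI)^{-1}\|=\infty\) for \(w\in\sigma(A)\). By the computation above, the open set
\[
U:=\{w:\|(A-wI)^{-1}\|>\varepsilon^{-1}\}
\]
equals \(\{w:\delta(w)<\varepsilon\}\). This includes the spectrum, where \(\delta=0\).

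We claim \(\overline U=\{w:\delta(w)\le\varepsilon\}\).

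\emph{Inclusion \(\subseteq\).} The function \(\delta\) is \(1\)-Lipschitz, since it is an infimum of \(1\)-Lipschitz functions. So \(\{\delta\le\varepsilon\}\) is closed and contains \(U\), hence contains \(\overline U\).

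\emph{Inclusion \(\supseteq\); this is the main obstacle.} Take \(w\) with \(\delta(w)=\varepsilon\). We must show \(w\in\overline U\). For any \(\eta>0\) there is \(j\) with \(|a_j-w|<\varepsilon+\eta\). Move \(w\) toward \(a_j\) along the segment to a point \(w'\) with \(|w'-w|\le 2\eta\) and \(|a_j-w'|<\varepsilon\). Then \(w'\in U\), and \(w'\to w\) as \(\eta\to0\). The case \(\varepsilon>0\) guarantees the segment step is possible.

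Applying this to \(w=z\) gives the pseudospectral equivalence, and its negation is \(\delta(z)>\varepsilon\), which is the stated equivalent form.
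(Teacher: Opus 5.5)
Your proof is correct and follows the same route as the paper. You use the bounded diagonal resolvent off $\overline{\{a_j\}}$, eigenvectors or approximate eigenvectors on it, the formula $\|(A-wI)^{-1}\|=\operatorname{dist}(w,\sigma(A))^{-1}$, and then the closed-convention pseudospectrum; the only difference is that you explicitly justify the closure identity $\overline{\{\delta<\varepsilon\}}=\{\delta\le\varepsilon\}$ (Lipschitz continuity plus the segment argument), which the paper merely asserts.
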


\begin{proof}
Let
\[
S:=\overline{\{a_j : j\in\mathbb N\}}.
\]
We first prove that \(\sigma(A)=S\). If \(\lambda\notin S\), then
\[
d_\lambda := \inf_{j\in\mathbb N}|a_j-\lambda|>0 .
\]
Define \(R_\lambda:\ell^2(\mathbb N)\to\ell^2(\mathbb N)\) by
\[
(R_\lambda y)_j=(a_j-\lambda)^{-1}y_j .
\]
Then \(R_\lambda\) is bounded and
\[
\|R_\lambda\|=\sup_{j\in\mathbb N}|a_j-\lambda|^{-1} = d_\lambda^{-1}.
\]
Moreover \(R_\lambda y\in \mathcal D(A)\), since
\[
a_j(a_j-\lambda)^{-1} = 1+\lambda(a_j-\lambda)^{-1}
\]
is uniformly bounded in \(j\). Hence \(R_\lambda=(A-\lambda I)^{-1}\), so \(\lambda\in\rho(A)\).

Conversely, let \(\lambda\in S\). If \(\lambda=a_j\) for some \(j\), then \(A-\lambda I\) has a non-trivial kernel, so \(\lambda\in\sigma(A)\). Otherwise there are indices \(j_n\) such that \(a_{j_n}\to\lambda\). Since \(\|e_{j_n}\|=1\) and
\[
\|(A-\lambda I)e_{j_n}\|=|a_{j_n}-\lambda|\to 0,
\]
the operator \(A-\lambda I\) cannot have a bounded inverse. Thus \(\lambda\in\sigma(A)\). This proves
\[
\sigma(A)=\overline{\{a_j : j\in\mathbb N\}}.
\]

For \(w\in\rho(A)\), the preceding computation gives
\[
\|(A-wI)^{-1}\| = \frac{1}{\inf_{j\in\mathbb N}|a_j-w|} = \frac{1}{\operatorname{dist}(w,\sigma(A))}.
\]
Therefore the open resolvent-level pseudospectrum is
\[
\{w:\operatorname{dist}(w,\sigma(A))<\varepsilon\},
\]
and the closed CH23 convention gives
\[
\sigma_\varepsilon(A) = \{w:\operatorname{dist}(w,\sigma(A))\le \varepsilon\}.
\]
For \(K=\{z\}\), this is exactly
\[
z\in\sigma_\varepsilon(A) \quad\Longleftrightarrow\quad \inf_{j\in\mathbb N}|a_j-z|\le \varepsilon .
\]
\end{proof}

Thus, on diagonal singleton inputs, exact spectral intersection is the condition that the diagonal values approach the singleton, while fixed-\(\varepsilon\) pseudospectral
intersection is the condition that they approach it within distance \(\varepsilon\). Raw finite-query transport can encode either threshold into a new diagonal accumulation pattern.

\subsection{The Six-Problem CH23 Ambient}
The diagonal problems are the sources. To obtain the full theorem-block ambient, we add the corresponding CH23 general \(\ell^2(\mathbb N)\) and graph representations.

\begin{definition}[The six-problem ambient]\label{def:six-ambient}
	Let
	\[
	\mathcal U^{\sigma,\mathrm{sgl}}_J := \{\Psd,\Psg,\Psr\},
	\]
	where \(\Psg\) and \(\Psr\) are the CH23 singleton-window exact spectral decision problems on the corresponding general \(\ell^2(\bN)\) and graph representation classes. Let
	\[
	\mathcal U^{\sigma_\varepsilon,\mathrm{sgl}}_{J,\varepsilon} := \{\Ppd,\Ppg,\Ppr\},
	\]
	where \(\Ppg\) and \(\Ppr\) are the corresponding fixed-\(\varepsilon\) pseudospectral problems. The six-problem ambient is then defined by
	\[
	\mathcal U^{\sharp}_{2,J,\varepsilon} := \mathcal U^{\sigma,\mathrm{sgl}}_J \cup \mathcal U^{\sigma_\varepsilon,\mathrm{sgl}}_{J,\varepsilon}.
	\]
	The intended block sources are
	\[
	\mathcal S^\sigma_J := \Psd, \qquad
	\mathcal S^{\sigma_\varepsilon}_{J,\varepsilon} := \Ppd.
	\]
\end{definition}

For the later representation embeddings, we recall only the pieces of the CH23 interfaces that must be finitely reconstructed from diagonal data.

\begin{definition}[General and graph singleton-window interfaces]\label{def:gen-graph-singleton-interfaces}
	The general \( \ell^2(\mathbb N)\) singleton-window problems are the CH23 problems on the bounded-dispersion \( \ell^2(\mathbb N)\)-operator class. Their evaluation interface contains
	\[
	m_{ij}(A,K):=\langle Ae_j,e_i\rangle,
	\]
	for $i,j\in\mathbb N$, the CH23 bounded-dispersion witness data, and the compact-window approximants
	\[
	\rho_n(A,K):=r_n(K).
	\]
	
	For the graph problem, fix a connected countable graph
	\[
	\mathcal G=(V,E_{\mathcal G}), \qquad V=\{v_1,v_2,\ldots\}.
	\]
	The CH23 graph interface contains coefficient evaluations
	\[
	\alpha_{ij}(A,K) := \alpha_A(v_i,v_j),
	\]
	local finite-support data, and the compact-window approximants \(\rho_n(A,K)\).
	
	The targets are respectively
	\[
	(A,K)\mapsto 1_{\{\sigma(A)\cap K=\varnothing\}}
	\]
	and
	\[
	(A,K)\mapsto 1_{\{\sigma_{\varepsilon}(A)\cap K=\varnothing\}}.
	\]
	All these problems use the same convention that \(1\) means ``Yes'' and \(0\) means ``No''.
\end{definition}

\begin{theorem}[Height input imported from CH23]\label{thm:height-input}
	For every $\mathcal P\in \mathcal U^{\sharp}_{2,J,\varepsilon}$, one has
	\[
	\SCIG(\mathcal P)=2.
	\]
\end{theorem}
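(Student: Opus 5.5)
For each of the six problems in \(\mathcal U^\sharp_{2,J,\varepsilon}\) we prove \(\SCIG(\mathcal P)\le 2\) and \(\SCIG(\mathcal P)\ge 2\). Since the statement is labelled as imported, the upper bounds come from \cite[Thm.~3.10 and Rem.~3.11]{ColbrookHansen22}. For the lower bounds we run the CH23 diagonal argument on the diagonal singleton sources and then transport the bounds to the general and graph problems.

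\textbf{Upper bounds.} By \cite[Thm.~3.10]{ColbrookHansen22}, the decision problems \(\Xi_3\) and \(\Xi_4\) lie in \(\Sigma_2\), hence have towers of height \(2\), on the general bounded-dispersion class and on the graph class. Restricting the compact input to singletons \(\{z\}\), \(z\in J\), keeps these towers valid, since the towers only use the approximants \(\rho_n\). For the diagonal problems there is an explicit tower via \cref{lem:diag-tests}. For the spectral problem set
\[
\Gamma_{n_2,n_1}(A,K)=1_{\{\min_{j\le n_1}|a_j-r_{n_1}(K)|>2^{-n_2}\}}.
\]
The inner limit in \(n_1\) stabilises to \(1_{\{\inf_j|a_j-z|>2^{-n_2}\}}\), up to boundary cases. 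We avoid the boundary by replacing the sharp threshold with a nested-interval threshold \(2^{-n_2}\pm 2^{-n_1}\) that is eventually decided. The outer limit in \(n_2\) then gives \(1_{\{\inf_j|a_j-z|>0\}}\). The pseudospectral problem is handled the same way with the threshold \(\varepsilon+2^{-n_2}\). These are general algorithms, because each uses only finitely many \(\mu_j\) and \(\rho_n\).

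\textbf{Lower bounds on the diagonal sources.} Suppose \(\Gamma_n\) is a height-one tower for \(\Psd\). Fix an interior point \(z\in J\) and build a diagonal sequence adversarially, as in CH23. Start with entries bounded away from \(z\), for example \(a_j=z+1\); then \(\Gamma_n\to 1\), so some \(n_1\) has \(\Gamma_{n_1}=1\). This value is determined by a finite transcript, by (G1). Freeze that transcript and append entries \(z+1/k\). For the modified sequence the answer is \(0\), so some later \(n_2>n_1\) has \(\Gamma_{n_2}=0\). Freeze again and append \(z+1\) entries, making the accumulated inf positive. To make this work we need the answer to flip infinitely often, and we get this by letting a diagonal limit object alternate between accumulating at \(z\) and staying away from \(z\). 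The cleanest version interleaves as follows: in even stages we add entries approaching \(z\) only up to a distance \(\delta_m\), which keeps the answer ``Yes''; in odd stages we force ``No'' using closer entries. The limit sequence must have \(\inf=0\), yet \(\Gamma_n\) alternates, so it cannot converge. The main obstacle is choosing the stage construction so that both intermediate instances have the intended answers and the limit instance is well defined. To handle this we use (G1) and (G2) to freeze the queried indices and the \(\rho_n\) values, and we only modify coefficients of larger index. The pseudospectral case is the same argument with entries at distances \(\varepsilon+1/k\) and \(\varepsilon-\) something. Here the flips are between \(\inf>\varepsilon\) and \(\inf\le\varepsilon\), so the same Baire-type alternation shows \(\Ppd\notin\Delta_2\).

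\textbf{Transfer to the general and graph problems.} We show \(\Psd\le_{\raw,\fq}\Psg\) and \(\Psd\le_{\raw,\fq}\Psr\), using the identity encoding into the general class and a diagonal labelling of the graph, with the identity decoder. Every matrix entry is then a \(\mu_j\) or the constant \(0\). The dispersion and support witnesses of a diagonal operator are constants, so they need zero queries. Lower bounds then pass by \cite[Lem.~4.9 and Thm.~4.10]{Sorg26Witness}, and the same holds for the \(\varepsilon\)-versions. Combining the upper and lower bounds gives \(\SCIG(\mathcal P)=2\) for all six problems.
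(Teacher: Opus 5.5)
Your proof is correct, but it obtains the lower bounds by a genuinely different route from the paper.

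The paper imports both bounds for all six problems. It takes $\notin\Delta^G_2$ and $\in\Pi^A_2$ for $\Xi_3,\Xi_4$ on the diagonal, general and graph domains from \cite[Thm.~3.10]{ColbrookHansen22}. It then uses \cite[Rem.~3.11]{ColbrookHansen22} to guarantee that the lower bounds survive restriction to singleton windows.

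You instead prove $\SCIG\ge 2$ by hand only for $\Psd$ and $\Ppd$, using the freezing adversary. You alternate a tail $z+1$ with a tail $z+1/j$, freeze each finite query range and use (G1), so that $\Gamma_{n_m}$ oscillates on the limit instance. You then transfer the bound to the general and graph problems through the raw embeddings of \cref{lem:raw-within-block-representation-embeddings} and the monotonicity of \cref{lem:raw-fq-monotonicity}. Neither of these uses \cref{thm:height-input}, so there is no circularity.

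Your route makes the one delicate point self-contained: restricting to singletons could a priori lower the SCI, and you check directly that it does not. It also shows that the height-two lower bound is purely a diagonal accumulation phenomenon. It has two costs. The general and graph upper bounds are still imported. And the lower bounds now depend on the embeddings being legitimate, i.e.\ on every maximal diagonal operator being an admissible general and graph CH23 instance with constant dispersion and support data. The paper's citation is shorter and treats all six problems uniformly, but it rests entirely on the scope of the cited statements.

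A few small repairs.
\begin{enumerate}
\item In the pseudospectral adversary, never freeze an entry at distance $\varepsilon-\delta'$ from $z$. That pins the answer to $0$ and kills the alternation. Instead, play a tail such as $z+\varepsilon+1/j$ against a tail such as $z+2\varepsilon$. The first accumulates at distance exactly $\varepsilon$ and so gives answer $0$ under the closed convention of \cref{lem:diag-tests}.
\item For your explicit diagonal tower, a concrete boundary fix is to set $\Gamma_{n_2,n_1}=1$ iff $\min_{j\le n_1}|a_j-r_{n_1}(K)|+2^{-(n_1+1)}\ge 2^{-n_2}$ (resp.\ $\ge\varepsilon+2^{-n_2}$). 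With $\delta=\inf_j|a_j-z|$, its inner limit is exactly $1_{\{\delta\ge 2^{-n_2}\}}$ (resp.\ $1_{\{\delta\ge\varepsilon+2^{-n_2}\}}$).
\item The paper quotes CH23 as giving $\Pi^A_2$ rather than $\Sigma_2$; this is immaterial for the height.
\end{enumerate}
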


\begin{proof}
	The six problems in $\mathcal U^{\sharp}_{2,J,\varepsilon}$	are the singleton-window restrictions of the CH23 decision problems
	\[
	\Xi_3(A,K)=1_{\{\sigma(A)\cap K=\varnothing\}}, \qquad
	\Xi_4(A,K)=1_{\{\sigma_\varepsilon(A)\cap K=\varnothing\}},
	\]
	on the diagonal, general \(\ell^2(\mathbb N)\), and graph domains.
	
	By \cite[Thm.~3.10]{ColbrookHansen22} for \(j=3,4\) the corresponding compact-intersection decision problems on the diagonal, general, and graph domains satisfy the sharp
	classification
	\[
	\notin \Delta^G_2 \qquad\text{and}\qquad \in \Pi^A_2.
	\]
	The paragraph following \cite[Thm.~3.10]{ColbrookHansen22} states that the lower bounds remain valid when the compact sets are restricted to a fixed compact subset of \(\mathbb R\), and by \cite[Rem.~3.11]{ColbrookHansen22} the same classification remains valid for singleton compact sets $K=\{z\}$.
	
	By the conventions in \cite[Def.~5.5-5.8]{ColbrookHansen22}, membership in $\Delta^G_{m+1}$ means general type-\(G\) SCI at most \(m\). Hence
	\[
	\mathcal P\notin\Delta^G_2
	\]
	implies
	\[
	\operatorname{SCI}_G(\mathcal P)>1.
	\]
	Thus every one of the six singleton-window problems has
	\[
	\operatorname{SCI}_G(\mathcal P)\ge2.
	\]
	
	On the other hand, the inclusion $\mathcal P\in\Pi^A_2$	gives an arithmetic tower of height at most \(2\), hence in particular a general type-\(G\) tower of height at most \(2\), since arithmetic towers are special cases of general towers. Therefore
	\[
	\operatorname{SCI}_G(\mathcal P)=2
	\]
	for every
	\[
	\mathcal P\in \mathcal U^{\sharp}_{2,J,\varepsilon}.
	\]
\end{proof}

Consequently the six-problem ambient is a level-\(2\) exact-basis test. Any collapse or separation below happens inside the same exact SCI layer.

\section{The Raw Collapse Obstruction}\label{sec:raw-collapse}
We now test the raw exact-basis expectation. The result is negative: the raw preorder identifies the two diagonal sources by allowing artificial diagonal encodings whose accumulation points store the source predicate.

\subsection{The Diagonal Spectral/Pseudospectral Raw Collapse}
The proof of this result uses two soft accumulation encodings. The first turns exact spectral intersection into membership of a fixed point in an \(\varepsilon\)-pseudospectrum.  The second turns fixed-\(\varepsilon\) pseudospectral intersection into exact spectral accumulation.

\begin{theorem}[Raw collapse of the diagonal singleton sources]\label{thm:raw-collapse}
	One has
	\[
	\Psd\equiv_{\raw,\fq}\Ppd .
	\]
	Moreover, the coordinate reconstruction maps in the proof can be chosen continuous; the collapse is not caused merely by discontinuous finite-transcript maps.
\end{theorem}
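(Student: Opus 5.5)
The plan is to prove both directions, $\Psd\le_{\raw,\fq}\Ppd$ and $\Ppd\le_{\raw,\fq}\Psd$, by explicit encodings $E(A,K)=(B,K)$. Each encoding keeps the singleton window $K=\{z\}$ unchanged and replaces the diagonal operator $A$ by a new maximal closed diagonal operator $B$. The diagonal of $B$ is indexed by pairs $(j,n)\in\bN\times\bN$, transferred to $\bN$ by a fixed bijection $\beta:\bN\to\bN\times\bN$.

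In both directions the decoder is $D=\mathrm{id}_{\{0,1\}}$, which is trivially continuous. The window evaluations of the target are reconstructed by $\rho_n(E(A,K))=\rho_n(A,K)$, a one-query identity reconstruction. Each target diagonal evaluation $\mu_k(E(A,K))$ with $\beta(k)=(j,n)$ will be a continuous function of exactly the two source evaluations $\mu_j(A,K)=a_j$ and $\rho_n(A,K)=r_n$. Hence condition (F3) holds with $m_f=2$ and continuous $\vartheta_f$, which also gives the ``moreover'' clause. Correctness will then follow from \cref{lem:diag-tests}, by computing $\inf_k|b_k-z|$ in terms of $\delta:=\inf_j|a_j-z|$. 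The key point is that every error introduced by using $r_n$ instead of $z$ is at most $2^{-(n+1)}$, and these errors disappear when we take the infimum over $n$.

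\emph{Direction $\Psd\le\Ppd$.} For $\beta(k)=(j,n)$, set
\[
b_k:=r_n+\varepsilon+|a_j-r_n|+2^{-n}.
\]
Here $r_n$ is a complex number and the remaining terms are a positive real shift. Using $|r_n-z|\le 2^{-(n+1)}$ twice, the triangle inequality gives
\[
\varepsilon+|a_j-z|\;\le\;|b_k-z|\;\le\;\varepsilon+|a_j-z|+2^{1-n}.
\]
Taking the infimum over $(j,n)$ yields $\inf_k|b_k-z|=\varepsilon+\delta$. By \cref{lem:diag-tests}, $\sigma_\varepsilon(B)\cap K\neq\varnothing$ holds iff $\varepsilon+\delta\le\varepsilon$, i.e. iff $\delta=0$, i.e. iff $\sigma(A)\cap K\ne\varnothing$. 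So $\Xi_{\sigma_\varepsilon}(E(A,K))=\Xi_\sigma(A,K)$.

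\emph{Direction $\Ppd\le\Psd$.} For $\beta(k)=(j,n)$, set
\[
b_k:=r_n+\max\{0,|a_j-r_n|-\varepsilon\}.
\]
Then $|b_k-z|$ lies within $2^{-(n+1)}$ of $\max\{0,|a_j-r_n|-\varepsilon\}$. That quantity in turn lies within $2^{-(n+1)}$ of $\max\{0,|a_j-z|-\varepsilon\}$, because $t\mapsto\max\{0,t-\varepsilon\}$ is $1$-Lipschitz. Taking the infimum over $(j,n)$ gives $\inf_k|b_k-z|=\max\{0,\delta-\varepsilon\}$. This vanishes iff $\delta\le\varepsilon$. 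By \cref{lem:diag-tests} again, $\sigma(B)\cap K\ne\varnothing$ holds iff $\sigma_\varepsilon(A)\cap K\ne\varnothing$.

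\emph{Main obstacle.} The delicate step is that the encoding must depend on $z$, yet $z$ itself is never available from finitely many evaluations; only the approximants $r_n$ are. The exact thresholds, $=0$ versus $\le\varepsilon$, must nevertheless be hit exactly, not merely up to an error. Two devices handle this.
\begin{itemize}
\item Interleaving all precisions $n$ into the diagonal makes every approximation error disappear in the infimum.
\item In the first direction, the extra term $2^{-n}$ guarantees that $|b_k-z|\ge\varepsilon+|a_j-z|$ holds exactly, not just up to a small error. This prevents a spurious value $\le\varepsilon$ when $\delta>0$.
\end{itemize}
Finally, I will check that $B$ is a legitimate element of $\Omega_{\diag}$, i.e. an arbitrary complex sequence defines a maximal closed diagonal operator, and that $K$ stays in $\cK_{\mathrm{sgl}}(J)$. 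Both are immediate because the window is unchanged.
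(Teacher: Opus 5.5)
\textbf{Your second direction does not work as written.} The first direction is fine: the guard $2^{-n}$ makes $|b_k-z|\ge\varepsilon+|a_j-z|$ exact, so $\inf_k|b_k-z|=\varepsilon+\delta$. The failure is in $\Ppd\le_{\raw,\fq}\Psd$, at the step ``taking the infimum over $(j,n)$ gives $\inf_k|b_k-z|=\max\{0,\delta-\varepsilon\}$''.

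Your two-sided estimate $\bigl||b_k-z|-\max\{0,|a_j-z|-\varepsilon\}\bigr|\le 2^{-n}$ controls the infimum only from above, by using large $n$. From below it gives nothing useful, because the infimum also runs over the coarse levels $n$, where the error is not small. Approximation errors do not ``disappear in the infimum'' on that side. Since the threshold in this direction is $0$, a coarse level can produce an exact hit.

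Here is a concrete failure. Suppose that for some $n$ the fixed approximant $r_n=r_n(K)$ is real with $r_n<z$. This happens, e.g., for irrational $z$ with dyadic truncations as approximants, and nothing in \cref{def:singleton-windows} excludes it. Take $A=aI$ with $a:=2r_n-z-\varepsilon$.
\begin{itemize}
\item $|a-z|=\varepsilon+2(z-r_n)>\varepsilon$, so $\Xi_{\sigma_\varepsilon}(A,K)=1$.
\item $|a-r_n|=\varepsilon+(z-r_n)$, so $\max\{0,|a-r_n|-\varepsilon\}=z-r_n$ and $b_{(j,n)}=z$ for every $j$.
\item Hence $z$ is an eigenvalue of $B$ and $\Xi_\sigma(B,K)=0$.
\end{itemize}
A numerical instance is $\varepsilon=1$, $z=1/\sqrt2\in J$, $r_1(K)=1/2$, $a=-z$.

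\textbf{Repair.} Use the same device as in your first direction and set $b_k:=r_n+\max\{0,|a_j-r_n|-\varepsilon\}+2^{-n}$. The added quantity is a nonnegative real plus $2^{-n}$, so
\[
|b_k-z|\ge\max\{0,|a_j-r_n|-\varepsilon\}+2^{-(n+1)}\ge\max\{0,|a_j-z|-\varepsilon\}
\]
holds exactly. On the other side, $|b_k-z|\le\max\{0,|a_j-z|-\varepsilon\}+2^{1-n}$. Hence $\inf_k|b_k-z|=\max\{0,\delta-\varepsilon\}$. The rest of your argument then goes through, including continuity of $(u,v)\mapsto v+\max\{0,|u-v|-\varepsilon\}+2^{-n}$.

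\textbf{Comparison with the paper.} With this fix, your route is a valid alternative. The paper instead sends every instance to a fixed window $L=\{x_0\}$ and centres the new diagonal at the exact point $x_0$ with a positive real shift, e.g. $c_{p,j}=x_0+\max\{0,|a_j-r_{p+2}(K)|-\varepsilon\}+1/p$. On coarse levels, $c_{p,j}-x_0\ge 1/p$ is then a plain real inequality. The paper argues by accumulation and a split into $p<p_0$ and $p\ge p_0$, rather than computing the infimum exactly. Your version keeps the input window, so the window reconstruction is the identity and no fixed (computable) $x_0$ with its own approximants is needed. The price is that you must control the complex error $r_n-z$ in the centre on every level, which is exactly what the guard term does.
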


\begin{proof}
	We prove both raw finite-query reductions.
	
	Fix once and for all
	\[
	x_0\in J
	\]
	and a singleton
	\[
	L:=\{x_0\}.
	\]
	Fix also a bijection
	\[
	\nu:\mathbb N\to\mathbb N \times\mathbb N_0, \qquad r\mapsto(p(r),j(r)).
	\]
	This identifies
	\[
	\ell^2(\mathbb N) \cong \ell^2(\mathbb N \times\mathbb N_0)
	\]
	by a fixed unitary relabeling.
	
	\smallskip
	\noindent
	\textbf{Step 1: A raw transport from exact spectrum to fixed-\(\varepsilon\) pseudospectrum:}
	Let
	\[
	(A,K)\in\Omega_{\diag}(J), \qquad K=\{z\}, \qquad Ae_j=a_je_j.
	\]
	Define a diagonal operator
	\[
	B=B(A,K)
	\]
	on
	\[
	\ell^2(\mathbb N \times\mathbb N_0)
	\]
	by
	\[
	Be_{p,j}=b_{p,j} e_{p,j},
	\]
	where
	\[
	b_{p,j} := x_0+\varepsilon+|a_j-r_{p+2}(K)|+\frac1p.
	\]
	Equivalently, after the fixed relabeling \(\nu\), \(B\) is a diagonal operator on \(\ell^2(\mathbb N)\).
	
	This \(B\) is the maximal diagonal operator with diagonal entries \(b_{p,j}\). Hence it is closed and densely defined. Therefore
	\[
	B\in\Omega_{\diag}.
	\]
	Define
	\[
	E_{\sigma\to\varepsilon}(A,K):=(B(A,K),L)
	\]
	and let the decoder be
	\[
	D=\operatorname{id}_{\{0,1\}}.
	\]
	
	We prove the output identity. Put
	\[
	\delta:=\inf_j|a_j-z|.
	\]
	By \cref{lem:diag-tests},
	\[
	\Xi_\sigma(A,K)=0 \quad\Longleftrightarrow\quad \delta=0.
	\]
	
	Assume first that \(\delta=0\). For each \(p\), choose \(j_p\) such that
	\[
	|a_{j_p}-z|<\frac1p.
	\]
	Since
	\[
	|r_{p+2}(K)-z|\le2^{-(p+3)},
	\]
	we have
	\[
	|a_{j_p}-r_{p+2}(K)| \le |a_{j_p}-z|+|z-r_{p+2}(K)| < \frac1p+2^{-(p+3)} \longrightarrow 0.
	\]
	Hence
	\[
	b_{p,j_p} \longrightarrow x_0+\varepsilon.
	\]
	Thus
	\[
	x_0+\varepsilon\in\sigma(B).
	\]
	Consequently
	\[
	\operatorname{dist}(x_0,\sigma(B))\le\varepsilon.
	\]
	Since \(B\) is diagonal normal,
	\[
	x_0\in\sigma_\varepsilon(B).
	\]
	Therefore
	\[
	\Xi_{\sigma_\varepsilon}(B,L)=0=\Xi_\sigma(A,K).
	\]
	
	Assume next that \(\delta>0\). Choose \(p_0\) such that for every \(p\ge p_0\),
	\[
	|z-r_{p+2}(K)|<\frac{\delta}{2}.
	\]
	Then for \(p\ge p_0\) and every \(j\),
	\[
	|a_j-r_{p+2}(K)|\ge |a_j-z|-|z-r_{p+2}(K)| \ge \frac{\delta}{2}.
	\]
	Hence
	\[
	|b_{p,j}-x_0| = \varepsilon+|a_j-r_{p+2}(K)|+\frac1p \ge \varepsilon+\frac{\delta}{2}
	\]
	for $p\ge p_0$.	For the finitely many levels \(p<p_0\),
	\[
	|b_{p,j}-x_0| \ge \varepsilon+\frac1p \ge \varepsilon+\min_{1\le p<p_0}\frac1p
	\]
	if \(p_0>1\); if \(p_0=1\) there are no such levels. Thus there exists an $\eta>0$, such that
	\[
	|b_{p,j}-x_0|\ge\varepsilon+\eta.
	\]
	Therefore
	\[
	\operatorname{dist}(x_0,\sigma(B))>\varepsilon,
	\]
	so
	\[
	x_0\notin\sigma_\varepsilon(B).
	\]
	Thus
	\[
	\Xi_{\sigma_\varepsilon}(B,L)=1=\Xi_\sigma(A,K).
	\]
	
	We now verify finite-query reconstruction. A target diagonal coefficient at coordinate \(r\), with
	\[
	\nu(r)=(p,j),
	\]
	is
	\[
	b_{p,j} = x_0+\varepsilon+|\mu_j(A,K)-\rho_{p+2}(A,K)|+\frac1p,
	\]
	so it is reconstructed from the finite transcript
	\[
	(\mu_j,\rho_{p+2})
	\]
	by the map
	\[
	(u,v)\mapsto x_0+\varepsilon+|u-v|+\frac1p.
	\]
	The compact-window approximants of \(L\) are fixed constants. Hence every target evaluation in the compressed diagonal interface is finitely reconstructed from source evaluations and
	\[
	\mathcal P^\sigma_{J,\diag} \le_{\raw,\mathrm{fq}} \mathcal P^{\sigma_\varepsilon}_{J,\varepsilon,\diag}.
	\]
	
	\smallskip
	\noindent
	\textbf{Step 2: A raw transport from fixed-\(\varepsilon\) pseudospectrum to exact spectrum:}
	Let again
	\[
	(A,K)\in\Omega_{\diag}(J), \qquad K=\{z\}, \qquad Ae_j=a_je_j.
	\]
	Define a diagonal operator
	\[
	C=C(A,K)
	\]
	by
	\[
	Ce_{p,j}=c_{p,j}e_{p,j},
	\]
	where
	\[
	c_{p,j} := x_0+\max\{0,|a_j-r_{p+2}(K)|-\varepsilon\}+\frac1p.
	\]
	As above, \(C\) is a maximal closed diagonal operator. Hence
	\[
	C\in\Omega_{\diag}.
	\]
	Define
	\[
	E_{\varepsilon\to\sigma}(A,K) := (C(A,K),L),
	\]
	and again take
	\[
	D=\operatorname{id}_{\{0,1\}}.
	\]
	
	Put
	\[
	\delta:=\inf_j|a_j-z|.
	\]
	By \cref{lem:diag-tests},
	\[
	\Xi_{\sigma_\varepsilon}(A,K)=0 \quad\Longleftrightarrow\quad \delta\le\varepsilon.
	\]
	
	Assume first that \(\delta\le\varepsilon\). For every \(p\), choose \(j_p\) such that
	\[
	|a_{j_p}-z|<\varepsilon+\frac1p.
	\]
	Then
	\[
	|a_{j_p}-r_{p+2}(K)|-\varepsilon \le |a_{j_p}-z|+|z-r_{p+2}(K)|-\varepsilon < \frac1p+2^{-(p+3)}.
	\]
	Therefore
	\[
	0\le \max\{0,|a_{j_p}-r_{p+2}(K)|-\varepsilon\} < \frac1p+2^{-(p+3)}.
	\]
	Hence
	\[
	c_{p,j_p}\longrightarrow x_0.
	\]
	Thus
	\[
	x_0\in\sigma(C),
	\]
	and consequently
	\[
	\Xi_\sigma(C,L)=0=\Xi_{\sigma_\varepsilon}(A,K).
	\]
	
	Assume next that \(\delta>\varepsilon\). Choose \(p_0\) such that for all \(p\ge p_0\),
	\[
	|z-r_{p+2}(K)|<\frac{\delta-\varepsilon}{2}.
	\]
	Then for \(p\ge p_0\) and every \(j\),
	\[
	|a_j-r_{p+2}(K)|-\varepsilon \ge |a_j-z|-|z-r_{p+2}(K)|-\varepsilon \ge \frac{\delta-\varepsilon}{2}.
	\]
	Hence
	\[
	|c_{p,j}-x_0| \ge \frac{\delta-\varepsilon}{2}
	\]
	for $p\ge p_0$.	For the finitely many levels \(p<p_0\),
	\[
	|c_{p,j}-x_0|\ge \frac1p.
	\]
	Thus there is \(\eta>0\) such that
	\[
	|c_{p,j}-x_0|\ge\eta.
	\]
	Therefore
	\[
	x_0\notin\sigma(C),
	\]
	so
	\[
	\Xi_\sigma(C,L)=1=\Xi_{\sigma_\varepsilon}(A,K).
	\]
	
	The finite-query reconstruction is again explicit. A target diagonal coefficient at coordinate \(r\), with \(\nu(r)=(p,j)\), is
	\[
	c_{p,j} = x_0+\max\{0,|\mu_j(A,K)-\rho_{p+2}(A,K)|-\varepsilon\}+\frac1p.
	\]
	It is reconstructed from
	\[
	(\mu_j,\rho_{p+2})
	\]
	by the map
	\[
	(u,v)\mapsto x_0+\max\{0,|u-v|-\varepsilon\}+\frac1p.
	\]
	The compact-window target evaluations are the fixed approximants of \(L\). Hence every target evaluation in the compressed diagonal interface is finitely reconstructed from source evaluations and
	\[
	\mathcal P^{\sigma_\varepsilon}_{J,\varepsilon,\diag} \le_{\raw,\mathrm{fq}} \mathcal P^\sigma_{J,\diag}.
	\]
	
	Combining the two reductions gives
	\[
	\mathcal P^\sigma_{J,\diag} \equiv_{\raw,\mathrm{fq}} \mathcal P^{\sigma_\varepsilon}_{J,\varepsilon,\diag}.
	\]
\end{proof}

This equivalence is the basic obstruction. It does not say that the two spectral notions are geometrically the same. It says that the raw finite-query preorder is able to code one decision predicate into a newly manufactured diagonal spectral accumulation pattern.

\subsection{Raw Principalization Of The Six-Problem Ambient}
The diagonal collapse becomes a collapse of the full six-problem ambient because the diagonal representation embeds into the general and graph CH23 representations by finite-query transports.

By the CH23 definition of the dispersion data \cite[§3.2.1, (3.9)]{ColbrookHansen22}
\[
D_{f,n}(A)= \max\{\|(I-P_{f(n)})AP_n\|,\|(I-P_{f(n)})A^*P_n\|\},
\]
a diagonal operator satisfies \(D_{n,n}(A)=0\). Hence the target-side dispersion information may be reconstructed by the constant choices \(f(n)=n\) and \(c_n=2^{-n}\).

\begin{lemma}[Raw within-block representation embeddings]\label{lem:raw-within-block-representation-embeddings}
	One has
	\[
	\mathcal P^\sigma_{J,\diag} \le_{\raw,\mathrm{fq}} \mathcal P^\sigma_{J,\gen}, \qquad
	\mathcal P^\sigma_{J,\diag} \le_{\raw,\mathrm{fq}} \mathcal P^\sigma_{J,\Graph},
	\]
	and
	\[
	\mathcal P^{\sigma_\varepsilon}_{J,\varepsilon,\diag} \le_{\raw,\mathrm{fq}}
	\mathcal P^{\sigma_\varepsilon}_{J,\varepsilon,\gen}, \qquad
	\mathcal P^{\sigma_\varepsilon}_{J,\varepsilon,\diag} \le_{\raw,\mathrm{fq}} \mathcal P^{\sigma_\varepsilon}_{J,\varepsilon,\Graph}.
	\]
\end{lemma}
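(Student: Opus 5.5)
The plan is to prove all four reductions with one construction: the identity-type representation inclusion. The spectral and pseudospectral cases are handled in parallel, since the target predicate does not affect the transport.

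\textbf{Encoding into the general class.} For \(\Psd\le_{\raw,\fq}\Psg\), take
\[
E(A,K):=(A,K).
\]
Here the maximal diagonal operator \(A\) is regarded as an element of the CH23 bounded-dispersion \(\ell^2(\mathbb N)\) class. This membership holds because \(\operatorname{span}\{e_j\}\) is a core, and because
\[
(I-P_n)AP_n=0=(I-P_n)A^*P_n.
\]
Hence \(D_{n,n}(A)=0\), so the dispersion witness \(f(n)=n\), \(c_n=2^{-n}\) works uniformly.

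The decoder is \(D=\operatorname{id}_{\{0,1\}}\). The output identity \(\Xi(E(A,K))=\Xi(A,K)\) is immediate, because the operator and the window are unchanged. This uses the same spectrum and, by \cref{lem:diag-tests}, the same closed \(\varepsilon\)-pseudospectrum.

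The reconstructions are as follows:
\[
m_{jj}=\vartheta(\mu_j),\quad \vartheta=\mathrm{id},
\]
\[
m_{ij}=0 \ \text{for } i\ne j \quad (m_f=0,\ \text{a constant reconstruction}),
\]
\[
\rho_n \ \text{reconstructed from } \rho_n \ \text{itself}.
\]
The dispersion witness data are constants, which is exactly where the zero-query convention of \cref{def:raw-fq} is used.

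\textbf{Encoding into the graph class.} For \(\mathcal P_{J,\Graph}\), I fix the connected graph \(\mathcal G\) with enumeration \(v_1,v_2,\dots\) and transport \(A\) by the unitary relabeling \(e_j\mapsto \delta_{v_j}\). This gives the operator with
\[
\alpha(v_i,v_j)=a_j\,\delta_{ij}.
\]
Unitary equivalence preserves both the spectrum and the pseudospectrum, so again \(D=\mathrm{id}\). The reconstructions are:
\begin{itemize}
\item \(\alpha_{jj}\) from \(\mu_j\);
\item \(\alpha_{ij}=0\) for \(i\ne j\);
\item the local finite-support data, which are constant because a diagonal operator has support \(\{v_j\}\), within any graph neighbourhood.
\end{itemize}
If the CH23 graph class requires coefficients supported on edges together with a bound on the diagonal part, I will check that a pure diagonal (loop) coefficient is admissible. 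Alternatively, I would use the CH23 convention that graph operators are of the form multiplication plus adjacency-weighted terms, with the adjacency weights set to zero.

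\textbf{Main obstacle.} The main obstacle is the bookkeeping of the exact CH23 interface for the general and graph classes: making sure every target evaluation, including the dispersion and support witnesses, is a function of finitely many source evaluations. The plan is to show that all such witnesses are instance-independent for diagonal inputs and hence constant. This is only a matter of checking definitions, not of analysis, since the decision values are preserved exactly.
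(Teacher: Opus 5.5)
Your proposal is correct and follows essentially the same route as the paper. Into the general class you use the identity representation inclusion with the constant dispersion witness $f(n)=n$, $c_n=2^{-n}$, and reconstruct matrix entries from $\mu_j$ or the constant $0$; into the graph class you use the unitary $e_j\mapsto \mathbf 1_{v_j}$ with $\alpha(v_i,v_j)=a_j\delta_{ij}$ and constant local supports $\{v_j\}$; in both cases the decoder is the identity and unitary invariance of $\sigma$ and $\sigma_\varepsilon$ gives the output identity. Your caveat about whether purely diagonal graph coefficients are admissible in the CH23 graph class is a point the paper's proof also takes for granted rather than checks.
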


\begin{proof}
	We prove the spectral statements; the pseudospectral statements are identical, using unitary invariance of the \(\varepsilon\)-pseudospectrum.
	
	For diagonal-to-general, use the encoding
	\[
	E(A,K):=(A,K),
	\]
	where the same diagonal operator is viewed as an element of the CH23 general \(\ell^2(\mathbb N)\)-operator class. The decoder is
	\[
	D=\operatorname{id}_{\{0,1\}}.
	\]
	The target identity holds because the underlying operator and compact input are unchanged, i.e.
	\[
	\sigma(E(A,K))=\sigma(A).
	\]
	A target matrix-entry evaluation satisfies
	\[
	\langle Ae_j,e_i\rangle =
	\begin{cases}
		\mu_j(A,K),&i=j,\\
		0,&i\neq j.
	\end{cases}
	\]
	Thus each matrix entry is reconstructed from either one diagonal source evaluation or a constant. Compact-window approximants are passed through unchanged. For diagonal
	operators, the bounded-dispersion auxiliary data can be chosen canonically, for instance with
	\[
	f(n)=n, \qquad c_n=2^{-n},
	\]
	because
	\[
	(I-P_n)AP_n=0, \qquad (I-P_n)A^*P_n=0.
	\]
	These auxiliary target evaluations are therefore reconstructed by constants. Hence
	\[
	\mathcal P^\sigma_{J,\diag}\le_{\raw,\mathrm{fq}} \mathcal P^\sigma_{J,\gen}.
	\]
	
	For diagonal-to-graph, fix a connected countable graph
	\[
	\mathcal G=(V,E_{\mathcal G}), \qquad V=\{v_1,v_2,\ldots\},
	\]
	and the unitary
	\[
	U:\ell^2(\mathbb N)\to\ell^2(V), \qquad Ue_j=\mathbf 1_{v_j}.
	\]
	Encode then
	\[
	E(A,K):=(UAU^{-1},K).
	\]
	If
	\[
	Ae_j=a_je_j,
	\]
	then
	\[
	UAU^{-1}\mathbf 1_{v_j}=a_j\mathbf 1_{v_j}.
	\]
	Thus the graph coefficient is
	\[
	\alpha(v_i,v_j) =
	\begin{cases}
		a_j,&i=j,\\
		0,&i\neq j.
	\end{cases}
	\]
	Local support sets may be chosen as
	\[
	S_{v_j}:=\{v_j\}.
	\]
	Therefore all graph coefficient and local-support evaluations are reconstructed from diagonal source evaluations and constants. Compact-window approximants are unchanged. The target identity follows from
	\[
	\sigma(UAU^{-1})=\sigma(A).
	\]
	Hence
	\[
	\mathcal P^\sigma_{J,\diag}\le_{\raw,\mathrm{fq}} \mathcal P^\sigma_{J,\Graph}.
	\]
	For the fixed-\(\varepsilon\) pseudospectral block, the same encodings work because
	\[
	\sigma_\varepsilon(UAU^{-1}) = \sigma_\varepsilon(A),
	\]
	and because representation inclusion does not change the operator. This proves the two pseudospectral reductions.
\end{proof}

The block-stabilization perspective in this lemma is parallel to \cite[Prop.~5.14]{Sorg26Witness} as a two-sided finite-query transport for singleton-window block-diagonal stabilization is proved there.

\begin{corollary}[Raw principal collapse of the six-problem ambient]\label{cor:raw-principal}
	The six-problem ambient is raw-principal at height \(2\), i.e.
	\[
	\operatorname{MinDeg}^{\raw}_2(\mathcal U^{\sharp}_{2,J,\varepsilon}) = \{[\Psd]_{\raw}\} = \{[\Ppd]_{\raw}\}.
	\]
\end{corollary}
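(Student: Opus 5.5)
By \cref{thm:height-input}, every member of $\mathcal U^{\sharp}_{2,J,\varepsilon}$ has $\SCIG=2$. Hence $\mathcal O_2(\mathcal U^{\sharp}_{2,J,\varepsilon})$ is the whole six-problem ambient, and $D^{\raw}_2$ is its quotient by $\equiv_{\raw,\fq}$. The plan is to show that the degree $d_0:=[\Psd]_{\raw}$ lies below every degree in this quotient. Minimality and uniqueness of the minimal degree then follow formally.

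\textbf{Step 1: $d_0$ lies below all six degrees.} For the three spectral problems I use $\Psd\le_{\raw,\fq}\Psd$, which holds via the identity encoding, the identity decoder and single-query reconstructions. I also use the two spectral embeddings of \cref{lem:raw-within-block-representation-embeddings}. For the three pseudospectral problems, \cref{thm:raw-collapse} gives $\Psd\le_{\raw,\fq}\Ppd$. Composing with the pseudospectral embeddings of \cref{lem:raw-within-block-representation-embeddings} then gives $\Psd\le_{\raw,\fq}\Ppg$ and $\Psd\le_{\raw,\fq}\Ppr$.

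This composition needs transitivity of $\le_{\raw,\fq}$, and I will record it briefly. Given $(E_1,D_1,\vartheta^1)$ for $\mathcal S\le\mathcal Q$ and $(E_2,D_2,\vartheta^2)$ for $\mathcal Q\le\mathcal P$, take $E=E_2\circ E_1$ and $D=D_1\circ D_2$; the decoder $D$ is continuous as a composite of continuous maps. The output identity is $D_1(D_2(\Xi(E_2E_1A)))=D_1(\Phi(E_1A))=\Psi(A)$. For the queries, each target evaluation $f$ is first written via $\vartheta^2_f$ from finitely many $\mathcal Q$-evaluations $\gamma_{f,i}$ of $E_1(A)$. Each $\gamma_{f,i}(E_1A)$ is in turn reconstructed by $\vartheta^1_{\gamma_{f,i}}$ from finitely many source evaluations. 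Concatenating these finite source lists and composing the reconstruction maps, restricted to the relevant images, gives the reconstruction for $f$. The $m_f=0$ case is handled by constants. Reflexivity is the identity reduction.

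\textbf{Step 2: minimality and uniqueness.} Suppose $e\prec d_0$ with $e=[\mathcal Q]_{\raw}$. By Step 1 we also have $d_0\preceq e$, so $e=d_0$, a contradiction. Hence $d_0$ is minimal. If $d$ is any minimal degree, then $d_0\preceq d$, and minimality of $d$ forces $d_0\not\prec d$, hence $d=d_0$. I need $\preceq$ on $D^{\raw}_2$ to be a partial order, which is immediate because it is a preorder quotiented by its induced equivalence. Finally, \cref{thm:raw-collapse} gives $[\Psd]_{\raw}=[\Ppd]_{\raw}$, which yields the second displayed equality.

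\textbf{Main obstacle.} The only non-formal point is transitivity, specifically composing the reconstructions. One has to check that $\vartheta^2_f$, defined on $\operatorname{im}(\gamma_{f,1},\dots,\gamma_{f,m_f})$ over all of $\Omega_{\mathcal Q}$, is applied to tuples that really lie in the image on $E_1(\Sigma)$. This holds trivially, since $E_1(\Sigma)\subseteq\Omega_{\mathcal Q}$. The composite reconstruction should therefore be defined on the image of the concatenated source tuple.
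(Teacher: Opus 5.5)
Your proposal is correct and follows the paper's proof essentially step for step. You use the height input to identify the exact layer with the whole ambient, then the raw collapse together with the within-block embeddings (composed by transitivity) to place $[\Psd]_{\raw}$ below every degree, and finally the formal minimality argument. The only differences are that you verify transitivity of $\le_{\raw,\fq}$ by hand instead of invoking \cref{prop:modal-preorder} for the admissible raw modality, and that you explicitly check that $[\Psd]_{\raw}$ is itself minimal, which the paper leaves implicit.
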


\begin{proof}
	By \cref{thm:height-input}, every member of $\mathcal U^\sharp_{2,J,\varepsilon}$ has exact raw height \(2\). By \cref{thm:raw-collapse},
	\[
	\mathcal P^\sigma_{J,\diag} \equiv_{\raw,\mathrm{fq}} \mathcal P^{\sigma_\varepsilon}_{J,\varepsilon,\diag}.
	\]
	By \cref{lem:raw-within-block-representation-embeddings},
	\[
	\mathcal P^\sigma_{J,\diag} \le_{\raw,\mathrm{fq}} \mathcal P^\sigma_{J,\gen}, \qquad
	\mathcal P^\sigma_{J,\diag} \le_{\raw,\mathrm{fq}} \mathcal P^\sigma_{J,\Graph},
	\]
	and
	\[
	\mathcal P^{\sigma_\varepsilon}_{J,\varepsilon,\diag} \le_{\raw,\mathrm{fq}}
	\mathcal P^{\sigma_\varepsilon}_{J,\varepsilon,\gen}, \qquad
	\mathcal P^{\sigma_\varepsilon}_{J,\varepsilon,\diag} \le_{\raw,\mathrm{fq}}
	\mathcal P^{\sigma_\varepsilon}_{J,\varepsilon,\Graph}.
	\]
	Using
	\[
	\mathcal P^\sigma_{J,\diag} \equiv_{\raw,\mathrm{fq}}
	\mathcal P^{\sigma_\varepsilon}_{J,\varepsilon,\diag},
	\]
	we conclude that
	\[
	\mathcal P^\sigma_{J,\diag} \le_{\raw,\mathrm{fq}} \mathcal P
	\]
	for $\mathcal P\in \mathcal U^\sharp_{2,J,\varepsilon}$. Now let
	\[
	[\mathcal Q]_{\raw} \in D^{\raw}_2(\mathcal U^\sharp_{2,J,\varepsilon})
	\]
	be raw-minimal. Since
	\[
	\mathcal P^\sigma_{J,\diag} \le_{\raw,\mathrm{fq}} \mathcal Q,
	\]
	we have
	\[
	[\mathcal P^\sigma_{J,\diag}]_{\raw} \preceq [\mathcal Q]_{\raw}.
	\]
	By minimality of \([\mathcal Q]_{\raw}\), this forces
	\[
	[\mathcal Q]_{\raw} = [\mathcal P^\sigma_{J,\diag}]_{\raw}.
	\]
	Hence
	\[
	\operatorname{MinDeg}^{\raw}_2(\mathcal U^\sharp_{2,J,\varepsilon}) = \{[\mathcal P^\sigma_{J,\diag}]_{\raw}\}.
	\]
	The equality
	\[
	[\mathcal P^\sigma_{J,\diag}]_{\raw} = [\mathcal P^{\sigma_\varepsilon}_{J,\varepsilon,\diag}]_{\raw}
	\]
	follows directly from \cref{thm:raw-collapse}.
\end{proof}

Thus the original raw two-source theorem-block picture cannot be correct. The next sections refine the preorder rather than changing the CH23 problems themselves.

\section{Modal Finite-Query Transports}\label{sec:modal-transports}
The raw collapse in \cref{thm:raw-collapse} shows that the reduction notion must remember more than finite codability. We therefore introduce modalities: admissibility conditions on encodings, decoders, transcript reconstructions, and uniformity or naturality requirements.

\subsection{Typed Interfaces And Modal Finite-Query Transport}
\begin{definition}[Typed SCI problem]\label{def:typed-problem}
	A typed SCI computational problem is a tuple
	\[
	\mathbf P=(\Xi_P,\Omega_P,\mathcal Y_P,\Lambda_P), \qquad \mathcal Y_P=(Y_P,d_P),
	\]
	where each evaluation \(\lambda\in\Lambda_P\) has a specified value space \(V_\lambda\) and is a map
	\[
	\lambda:\Omega_P\to V_\lambda.
	\]
	The untyped complex-valued case is obtained by taking \(V_\lambda=\bC\) for all \(\lambda\).
\end{definition}

The typed formulation is only of bookkeeping nature. It allows matrix entries, compact approximants, graph supports, natural-number data, and finite rational objects to be
handled by one transport definition.

Now the picture points to that a finite-query transport never sees the whole interface at once. It sees finite transcripts, so the modality must say which maps on such finite transcripts are allowed.

\begin{definition}[Finite transcript]\label{def:finite-transcript}
Let \(\mathbf S\) be a typed SCI computational problem and let
\[
\vec\gamma=(\gamma_1,\ldots,\gamma_m), \qquad m\in\mathbb N_0, \qquad \gamma_i\in\Lambda_S.
\]
If \(m\ge1\), the transcript map is
\[
\vec\gamma:\Omega_S\to V_{\gamma_1}\times\cdots\times V_{\gamma_m}, \qquad
x\mapsto(\gamma_1(x),\ldots,\gamma_m(x)).
\]
If \(m=0\), the transcript value space is the one-point space
\[
V_{\emptyset}:=\{\ast\},
\]
and
\[
\vec\gamma:\Omega_S\to\{\ast\}
\]
is the constant map. The transcript image is denoted by
\[
\operatorname{im}(\vec\gamma).
\]
A reconstruction of a target evaluation \(\lambda\in\Lambda_P\) from this transcript is a map
\[
\vartheta:\operatorname{im}(\vec\gamma)\to V_\lambda.
\]
\end{definition}

Here one might ask, if the zero-length transcript convention is necessary. To avoid complicated notation for constant reconstructions, such as fixed compact-window approximants or zero off-diagonal entries, this is a matter of notational convenience rather than logical necessity.

A modality specifies here three admissibility classes: encodings, decoders, and transcript reconstructions. Different choices produce different semantic worlds over the same raw
SCI computational problems as we will see in the following.

\begin{definition}[Finite-query transport modality]\label{def:modality}
	A finite-query transport modality \(\mathfrak m\) assigns, for every ordered pair of typed SCI problems \((\mathbf S, \mathbf P)\):
	\begin{enumerate}[label=\textup{(M\arabic*)}]
		\item an admissible encoding class
		\[
		\mathcal E_{\mathfrak m}(\mathbf S, \mathbf P) \subseteq \{E:\Omega_S\to\Omega_P\};
		\]
		\item an admissible decoder class
		\[
		\mathcal D_{\mathfrak m}(\mathbf P, \mathbf S) \subseteq \{D:Y_P\to Y_S\};
		\]
		\item for every \(\lambda\in\Lambda_P\) and every finite source transcript \(\vec\gamma\), a class of admissible reconstruction maps
		\[
		\Theta_{\mathfrak m}(\lambda;\vec\gamma) \subseteq \{\vartheta:\operatorname{im}(\vec\gamma)\to V_\lambda\}.
		\]
	\end{enumerate}
\end{definition}

For this definition to give a meaningful degree theory, admissible transports must compose. The next definition isolates the exact closure assumptions needed for this.

\begin{definition}[Modal finite-query transport]\label{def:modal-fq}
	Let \(\mathbf S, \mathbf P\) be typed SCI problems and let \(\mathfrak m\) be a finite-query transport modality. We write
	\[
	\mathbf S\lemfq{\mathfrak m} \mathbf P
	\]
	if there exist
	\[
	E\in\mathcal E_{\mathfrak m}(\mathbf S, \mathbf P),\qquad
	D\in\mathcal D_{\mathfrak m}(\mathbf P, \mathbf S),
	\]
	and, for every \(\lambda\in\Lambda_P\), a number
	\[
	m_\lambda\in\mathbb N_0,
	\]
	source evaluations
	\[
	\gamma_{\lambda,1},\ldots,\gamma_{\lambda,m_\lambda}\in\Lambda_S,
	\]
	and a reconstruction map
	\[
	\vartheta_\lambda \in \Theta_{\mathfrak m} \bigl(\lambda;(\gamma_{\lambda,1},\ldots,\gamma_{\lambda,m_\lambda})\bigr)
	\]
	such that for every \(x\in\Omega_S\),
	\[
	D\bigl(\Xi_P(E(x))\bigr)=\Xi_S(x),
	\]
	and for every \(\lambda\in\Lambda_P\),
	\[
	\lambda(E(x)) = \vartheta_\lambda \bigl( \gamma_{\lambda,1}(x),\ldots,\gamma_{\lambda,m_\lambda}(x) \bigr).
	\]
\end{definition}

\begin{definition}[Admissible finite-query transport modality]\label{def:admissible-modality}
	Let \(\mathfrak m\) be a finite-query transport modality in the sense of \cref{def:modal-fq}. We call \(\mathfrak m\) admissible if the following three conditions hold.
	
	\begin{enumerate}[label=\textup{(A\arabic*)}]
		\item \textbf{Identity data}
		For every typed SCI computational problem \(\mathbf P\),
		\[
		\operatorname{id}_{\Omega_P}\in\mathcal E_{\mathfrak m}(\mathbf P, \mathbf P),\qquad
		\operatorname{id}_{Y_P}\in\mathcal D_{\mathfrak m}(\mathbf P, \mathbf P).
		\]
		Moreover, for every \(\lambda\in\Lambda_P\), the identity reconstruction
		\[
		\vartheta_\lambda:\operatorname{im}(\lambda)\to V_\lambda, \qquad \vartheta_\lambda(t):=t,
		\]
		belongs to
		\[
		\Theta_{\mathfrak m}(\lambda;\lambda).
		\]
		
		\item \textbf{Closure of encodings and decoders}
		If
		\[
		E_{\mathbf S, \mathbf P}\in\mathcal E_{\mathfrak m}(\mathbf S, \mathbf P), \qquad
		E_{P,Q}\in\mathcal E_{\mathfrak m}(\mathbf P, \mathbf Q),
		\]
		then
		\[
		E_{P,Q}\circ E_{S,P}\in\mathcal E_{\mathfrak m}(\mathbf S, \mathbf Q).
		\]
		If
		\[
		D_{Q,P}\in\mathcal D_{\mathfrak m}(\mathbf Q, \mathbf P),\qquad
		D_{P,S}\in\mathcal D_{\mathfrak m}(\mathbf P, \mathbf S),
		\]
		then
		\[
		D_{P,S}\circ D_{Q,P}\in\mathcal D_{\mathfrak m}(\mathbf Q, \mathbf S).
		\]
		
		\item \textbf{Closure of finite transcript reconstructions}
		Let \(\mathbf S, \mathbf P, \mathbf Q\) be typed SCI computational problems and $\lambda\in\Lambda_Q$. Assume first that \(\lambda\) is reconstructed from a finite \(\mathbf P\)-transcript
		\[
		\vec\beta=(\beta_1,\ldots,\beta_r), \qquad \beta_i\in\Lambda_P,
		\]
		by an admissible reconstruction
		\[
		\varphi\in\Theta_{\mathfrak m}(\lambda;\vec\beta).
		\]
		Assume next that each \(\beta_i\) is reconstructed from a finite \(\mathbf S\)-transcript
		\[
		\vec\gamma_i=(\gamma_{i,1},\ldots,\gamma_{i,m_i}), \qquad \gamma_{i,j}\in\Lambda_S,
		\]
		by an admissible reconstruction
		\[
		\psi_i\in\Theta_{\mathfrak m}(\beta_i;\vec\gamma_i).
		\]
		The cases \(r=0\) and \(m_i=0\) are interpreted using the one-point transcript convention of \cref{def:finite-transcript}. Let
		\[
		\vec\gamma := (\gamma_{1,1},\ldots,\gamma_{1,m_1}, \gamma_{2,1},\ldots,\gamma_{r,m_r})
		\]
		be the concatenated \(\mathbf S\)-transcript. Suppose that the
		substitution map
		\[
		\Psi_{\vec\psi}:\operatorname{im}(\vec\gamma)\to
		V_{\beta_1}\times\cdots\times V_{\beta_r}
		\]
		defined by
		\[
		\Psi_{\vec\psi}
		(t_{1,1},\ldots,t_{r,m_r})
		:=
		\bigl(
		\psi_1(t_{1,1},\ldots,t_{1,m_1}),\ldots,
		\psi_r(t_{r,1},\ldots,t_{r,m_r})
		\bigr)
		\]
		has image contained in \(\operatorname{im}(\vec\beta)\).  Then
		\[
		\theta:=\phi\circ \Psi_{\vec\psi}
		\]
		belongs to
		\[
		\Theta_{\mathfrak m}(\lambda;\vec\gamma).
		\]
	\end{enumerate}
\end{definition}

\begin{proposition}[Modal finite-query transports form a preorder]\label{prop:modal-preorder}
	If \(\mathfrak m\) is admissible, then \(\le_{\mathfrak m,\fq}\) is reflexive and transitive.
\end{proposition}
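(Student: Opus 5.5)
Reflexivity comes straight from (A1), and transitivity from (A2) for the encoding and decoder and (A3) for the transcript reconstructions, after checking the image hypothesis of (A3).

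\textbf{Reflexivity.} Fix a typed problem \(\mathbf P\). Take \(E=\operatorname{id}_{\Omega_P}\) and \(D=\operatorname{id}_{Y_P}\), both admissible by (A1). For each \(\lambda\in\Lambda_P\), take the one-element transcript \((\lambda)\) with \(m_\lambda=1\) and the identity reconstruction \(\vartheta_\lambda(t)=t\), admissible by (A1). Then \(D(\Xi_P(E(x)))=\Xi_P(x)\) and \(\lambda(E(x))=\vartheta_\lambda(\lambda(x))\) hold trivially, so \(\mathbf P\lemfq{\mathfrak m}\mathbf P\).

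\textbf{Transitivity.} Suppose \(\mathbf S\lemfq{\mathfrak m}\mathbf P\) via \((E_{S,P},D_{P,S},(\vec\gamma_\beta,\psi_\beta)_{\beta\in\Lambda_P})\) and \(\mathbf P\lemfq{\mathfrak m}\mathbf Q\) via \((E_{P,Q},D_{Q,P},(\vec\beta_\lambda,\varphi_\lambda)_{\lambda\in\Lambda_Q})\). Set \(E:=E_{P,Q}\circ E_{S,P}\) and \(D:=D_{P,S}\circ D_{Q,P}\); both are admissible by (A2). The output identity follows by applying the two output identities in turn:
\[
D_{P,S}\bigl(D_{Q,P}(\Xi_Q(E_{P,Q}(E_{S,P}(x))))\bigr)=D_{P,S}\bigl(\Xi_P(E_{S,P}(x))\bigr)=\Xi_S(x),
\]
where the first one is used at the point \(E_{S,P}(x)\in\Omega_P\).

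For \(\lambda\in\Lambda_Q\), write \(\vec\beta_\lambda=(\beta_1,\dots,\beta_r)\). Concatenate the source transcripts \(\vec\gamma_{\beta_1},\dots,\vec\gamma_{\beta_r}\) into one \(\mathbf S\)-transcript \(\vec\gamma\), and put \(\theta_\lambda:=\varphi_\lambda\circ\Psi_{\vec\psi}\). Substituting the two reconstruction identities gives, for every \(x\in\Omega_S\),
\[
\lambda(E(x))=\varphi_\lambda\bigl(\beta_1(E_{S,P}x),\dots,\beta_r(E_{S,P}x)\bigr)=\varphi_\lambda\bigl(\psi_{\beta_1}(\vec\gamma_{\beta_1}(x)),\dots\bigr)=\theta_\lambda(\vec\gamma(x)).
\]

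\textbf{Main obstacle: the image hypothesis of (A3).} Admissibility of \(\theta_\lambda\) requires \(\Psi_{\vec\psi}(\operatorname{im}\vec\gamma)\subseteq\operatorname{im}(\vec\beta_\lambda)\). This holds because any point of \(\operatorname{im}\vec\gamma\) has the form \(\vec\gamma(x)\) for some \(x\in\Omega_S\). By the reconstruction identities, \(\Psi_{\vec\psi}(\vec\gamma(x))=\vec\beta_\lambda(E_{S,P}(x))\), which lies in \(\operatorname{im}(\vec\beta_\lambda)\). Here \(\operatorname{im}\) means the joint image of the tuple, not the product of coordinate images, and that is what makes the inclusion go through. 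So (A3) applies and \(\theta_\lambda\in\Theta_{\mathfrak m}(\lambda;\vec\gamma)\).

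The degenerate cases \(r=0\) or \(m_i=0\) are handled by the one-point transcript convention of \cref{def:finite-transcript}, exactly as in (A3). This gives \(\mathbf S\lemfq{\mathfrak m}\mathbf Q\).
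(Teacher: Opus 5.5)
Your proof is correct and follows the paper's argument. Reflexivity comes from (A1), the composite encoding and decoder come from (A2), and the reconstructions are substituted via (A3). You also explicitly check the image hypothesis of (A3), namely $\Psi_{\vec\psi}(\operatorname{im}\vec\gamma)\subseteq\operatorname{im}(\vec\beta_\lambda)$ via $\Psi_{\vec\psi}(\vec\gamma(x))=\vec\beta_\lambda(E_{S,P}(x))$; the paper invokes (A3) without verifying this.
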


\begin{proof}
	We prove reflexivity and transitivity separately.
	
	\smallskip
	\noindent\textbf{Reflexivity:}
	Let
	\[
	\mathbf P=(\Xi_P,\Omega_P,Y_P,\Lambda_P)
	\]
	be a typed SCI problem. By \textup{(A1)},
	\[
	\operatorname{id}_{\Omega_P}\in\mathcal E_{\mathfrak m}(\mathbf P, \mathbf P),\qquad
	\operatorname{id}_{Y_P}\in\mathcal D_{\mathfrak m}(\mathbf P, \mathbf P).
	\]
	For every \(\lambda\in\Lambda_P\), use the one-entry source transcript \((\lambda)\) and the reconstruction
	\[
	\vartheta_\lambda(t):=t.
	\]
	Again by \textup{(A1)}, this reconstruction is \(\mathfrak m\)-admissible. For every \(x\in\Omega_P\),
	\[
	\operatorname{id}_{Y_P}(\Xi_P(\operatorname{id}_{\Omega_P}(x))) = \Xi_P(x),
	\]
	and
	\[
	\lambda(\operatorname{id}_{\Omega_P}(x)) = \vartheta_\lambda(\lambda(x)).
	\]
	Hence
	\[
	\mathbf P\le_{\mathfrak m,\mathrm{fq}} \mathbf P.
	\]
	
	\smallskip
	\noindent \textbf{Transitivity:}
	Assume
	\[
	\mathbf S\le_{\mathfrak m,\mathrm{fq}} \mathbf P \qquad\text{and}\qquad \mathbf P\le_{\mathfrak m,\mathrm{fq}} \mathbf Q.
	\]
	Let the first transport be witnessed by
	\[
	E_{S,P}:\Omega_S\to\Omega_P, \qquad D_{P,S}:Y_P\to Y_S,
	\]
	and by reconstruction data for the evaluations of \(\mathbf P\). Let the second transport be witnessed by
	\[
	E_{P,Q}:\Omega_P\to\Omega_Q, \qquad
	D_{Q,P}:Y_Q\to Y_P,
	\]
	and by reconstruction data for the evaluations of \(\mathbf Q\).
	
	Define
	\[
	E_{S,Q}:=E_{P,Q}\circ E_{S,P}, \qquad
	D_{Q,S}:=D_{P,S}\circ D_{Q,P}.
	\]
	By \textup{(A2)},
	\[
	E_{S,Q}\in\mathcal E_{\mathfrak m}(\mathbf S, \mathbf Q), \qquad
	D_{Q,S}\in\mathcal D_{\mathfrak m}(\mathbf Q, \mathbf S).
	\]
	For \(x\in\Omega_S\), the output identity is
	\[
	\begin{aligned}
		D_{Q,S}\bigl(\Xi_Q(E_{S,Q}(x))\bigr)
		&= D_{P,S} \Bigl( D_{Q,P} \bigl( \Xi_Q(E_{P,Q}(E_{S,P}(x))) \bigr) \Bigr) \\
		&= D_{P,S} \bigl( \Xi_P(E_{S,P}(x)) \bigr) \\
		&= \Xi_S(x).
	\end{aligned}
	\]
	
	It remains to construct the finite reconstruction data. Fix
	\[
	\lambda\in\Lambda_Q.
	\]
	The second transport supplies a finite \(\mathbf P\)-transcript
	\[
	\vec\beta_\lambda=(\beta_1,\ldots,\beta_r), \qquad \beta_i\in\Lambda_P,
	\]
	and a reconstruction
	\[
	\varphi_\lambda\in\Theta_{\mathfrak m}(\lambda;\vec\beta_\lambda)
	\]
	such that, for every \(y\in\Omega_P\),
	\[
	\lambda(E_{P,Q}(y)) = \varphi_\lambda(\beta_1(y),\ldots,\beta_r(y)).
	\]
	For each \(i\), the first transport supplies a finite \(\mathbf S\)-transcript
	\[
	\vec\gamma_i=(\gamma_{i,1},\ldots,\gamma_{i,m_i})
	\]
	and a reconstruction
	\[
	\psi_i\in\Theta_{\mathfrak m}(\beta_i;\vec\gamma_i)
	\]
	such that, for every \(x\in\Omega_S\),
	\[
	\beta_i(E_{S,P}(x)) = \psi_i(\gamma_{i,1}(x),\ldots,\gamma_{i,m_i}(x)).
	\]
	Let \(\vec\gamma\) be the concatenation of all \(\vec\gamma_i\), and define $\theta_\lambda$ from \(\vec\gamma\) by the substitution formula in \textup{(A3)}. By \textup{(A3)},
	\[
	\theta_\lambda\in\Theta_{\mathfrak m}(\lambda;\vec\gamma).
	\]
	More further, for every \(x\in\Omega_S\),
	\[
	\begin{aligned}
		\lambda(E_{S,Q}(x))
		&= \lambda(E_{P,Q}(E_{S,P}(x)))\\
		&= \varphi_\lambda \bigl( \beta_1(E_{S,P}(x)),\ldots,\beta_r(E_{S,P}(x)) \bigr)\\
		&= \theta_\lambda(\vec\gamma(x)).
	\end{aligned}
	\]
	Thus every \(\mathbf Q\)-evaluation is finitely reconstructed from \(\mathbf S\)-evaluations by \(\mathfrak m\)-admissible data. Hence
	\[
	\mathbf S\le_{\mathfrak m,\mathrm{fq}} \mathbf Q.
	\]
\end{proof}

Thus every admissible modality gives a preorder and hence the potential for a degree theory. The raw preorder is only one point in this larger ordered family.

\subsection{The Raw Modality And The Modality Order}
The first modality we introduce is the one already used in the raw collapse theorem.

\begin{definition}[The raw modality]\label{def:raw-modality}
	The raw modality is
	\[
	\mathfrak m=\raw,
	\]
	where all set-theoretic encodings are admissible, all continuous output decoders are admissible, and all finite transcript reconstruction maps are admissible. Thus
	\[
	\mathbf S \lemfq{\raw} \mathbf P \quad\Longleftrightarrow\quad \mathbf S\leGfq \mathbf P.
	\]
\end{definition}

\begin{definition}[Order of modalities]\label{def:modality-order}
	For admissible modalities \(\mathfrak m\) and \(\mathfrak n\), write
	\[
	\mathfrak m\preceq\mathfrak n
	\]
	if for all SCI computational (typed) problems \(\mathbf P, \mathbf Q\),
	\[
	\mathbf P\lemfq{\mathfrak m} \mathbf Q \quad\Longrightarrow\quad \mathbf P\lemfq{\mathfrak n} \mathbf Q.
	\]
	Thus \(\mathfrak m\preceq\mathfrak n\) means that \(\mathfrak m\) is the stricter modality and \(\mathfrak n\) is the coarser modality.
\end{definition}

The order is contravariant to strength: stricter modalities have fewer admissible transports. Passing from a stricter modality to a coarser one can only identify more problems.

\subsection{Regularity-Controlled And Implemented Transport Modalities}
Borel transcript maps require a measurable structure on transcript images. The following trace convention avoids ambiguity when the image is not known to be a Borel subset of the ambient product.

\begin{definition}[Trace measurable transcript structure]\label{def:trace-transcript-structure}
	Assume that all evaluation value spaces \(V_\lambda\) are measurable spaces. Let
	\[
	\vec\gamma=(\gamma_1,\ldots,\gamma_m)
	\]
	for $\gamma_i\in\Lambda_S$ be a finite source transcript. Its value space is
	\[
	V_{\vec\gamma} := V_{\gamma_1}\times\cdots\times V_{\gamma_m}.
	\]
	The transcript image is
	\[
	\operatorname{im}(\vec\gamma) := \{\vec\gamma(x):x\in\Omega_S\} \subseteq V_{\vec\gamma}.
	\]
	We equip \(\operatorname{im}(\vec\gamma)\) with the trace sigma algebra
	\[
	\mathcal A_{\vec\gamma} :=
	\{
	B\cap \operatorname{im}(\vec\gamma): B\in\mathcal B(V_{\vec\gamma})
	\}.
	\]
	A reconstruction map
	\[
	\vartheta: \operatorname{im}(\vec\gamma)\to V_\lambda
	\]
	is called Borel if it is measurable as a map
	\[
	(\operatorname{im}(\vec\gamma),\mathcal A_{\vec\gamma}) \longrightarrow (V_\lambda,\mathcal B(V_\lambda)).
	\]
	This convention avoids the ambiguity of asking whether a map defined only on an arbitrary image subset is ``pointwise Borel''. We refer for standard Borel terminology and trace Borel structures to e.g. \cite[Ch.~II, Sec.~10-12]{Kechris1995}.
\end{definition}

We now list the regularity-controlled modalities used in this paper. The Borel variants are separated because a Borel decoder is not sound for raw type-\(G\) limit pullback in the same way that a continuous decoder is.

\begin{definition}[Regularity-controlled transport modalities]\label{def:regularity-controlled-modalities}
	Assume that all instance sets carry their evaluation topologies and evaluation sigma algebras, and that all output and evaluation value spaces carry their standard topological and measurable structures.
	
	\begin{enumerate}[label=\textup{(\roman*)}]
		\item The continuous modality $\cont$ is defined by
		\[
		\mathcal E_{\cont}(\mathbf S, \mathbf P) := C(\Omega_S,\Omega_P),
		\]
		\[
		\mathcal D_{\cont}(\mathbf P, \mathbf S) := C(Y_P,Y_S),
		\]
		and
		\[
		\Theta_{\cont}(\lambda;\vec\gamma) := C(\operatorname{im}(\vec\gamma),V_\lambda),
		\]
		where \(\operatorname{im}(\vec\gamma)\) carries the subspace topology inherited from \(V_{\vec\gamma}\).
		
		\item The Borel-with-continuous-decoder modality $\BorcD$ is defined by
		\[
		\mathcal E_{\BorcD}(\mathbf S, \mathbf P) := \{E:\Omega_S\to\Omega_P : E\text{ is Borel}\},
		\]
		\[
		\mathcal D_{\BorcD}(\mathbf P, \mathbf S) := C(Y_P,Y_S),
		\]
		and
		\[
		\Theta_{\BorcD}(\lambda;\vec\gamma) := \{\vartheta:\operatorname{im}(\vec\gamma)\to V_\lambda:
		\vartheta\text{ is Borel in the sense of \cref{def:trace-transcript-structure}}\}.
		\]
		This is the safest Borel refinement for type-\(G\) SCI, because the decoder remains continuous.
		
		\item The decoder-only Borel modality $\BorD$ is defined by
		\[
		\mathcal E_{\BorD}(\mathbf S, \mathbf P) := \{E:\Omega_S\to\Omega_P\},
		\]
		\[
		\mathcal D_{\BorD}(\mathbf P, \mathbf S) := \{D:Y_P\to Y_S:D\text{ is Borel}\},
		\]
		and
		\[
		\Theta_{\BorD}(\lambda;\vec\gamma) :=
		\{\text{all maps } \operatorname{im}(\vec\gamma)\to V_\lambda\}.
		\]
		This is the decoder-regular construction of \cite[Def.~4.13, Def.~4.14]{Sorg26Witness}; the preorder property is \cite[Prop.~4.15]{Sorg26Witness}, and the continuous-decoder instance is identified with \(\le_{G,\mathrm{fq}}\) in \cite[Rem.~4.16]{Sorg26Witness}.
		
		\item The full Borel modality $\Borfull$ is defined by requiring \(E\), \(D\), and all finite transcript reconstruction maps \(\vartheta_\lambda\) to be Borel. Thus
		\[
		\mathcal E_{\Borfull}(\mathbf S, \mathbf P) := \{E:\Omega_S\to\Omega_P:E\text{ is Borel}\},
		\]
		\[
		\mathcal D_{\Borfull}(\mathbf P, \mathbf S) := \{D:Y_P\to Y_S:D\text{ is Borel}\},
		\]
		and
		\[
		\Theta_{\Borfull}(\lambda;\vec\gamma) := \{\vartheta:\operatorname{im}(\vec\gamma)\to V_\lambda : \vartheta\text{ is Borel}\}.
		\]
		This modality is useful descriptively, but it is not automatically sound for ordinary type-\(G\) SCI, because a Borel decoder need not commute with limits.
		
		\item The TTE finite-query modality $\TTE$	is defined only after choosing represented-space structures for all instance spaces, output spaces, and evaluation value spaces, and after effectively indexing all evaluation families. A transport
		\[
		\mathbf S\le_{\TTE,\mathrm{fq}} \mathbf P
		\]
		requires
		
		\begin{enumerate}[label=\textup{(\alph*)}]
			\item \(E:\Omega_S\to\Omega_P\) has a computable realizer;
			
			\item \(D:Y_P\to Y_S\) has a computable realizer;
			
			\item uniformly in a code for each target evaluation \(\lambda\in\Lambda_P\), one can compute
			\[
			m_\lambda\in\mathbb N_0, \qquad \gamma_{\lambda,1},\ldots,\gamma_{\lambda,m_\lambda}\in\Lambda_S,
			\]
			and an index for a computable partial map
			\[
			\widehat\vartheta_\lambda : \subseteq V_{\gamma_{\lambda,1}}\times\cdots\times V_{\gamma_{\lambda,m_\lambda}} \to V_\lambda
			\]
			whose domain contains
			\[
			\operatorname{im}(\gamma_{\lambda,1},\ldots,\gamma_{\lambda,m_\lambda}),
			\]
			such that, for every \(x\in\Omega_S\),
			\[
			\lambda(E(x)) =
			\widehat\vartheta_\lambda \bigl( \gamma_{\lambda,1}(x),\ldots,\gamma_{\lambda,m_\lambda}(x) \bigr).
			\]
		\end{enumerate}
		The uniformity in \(\lambda\) is essential: without it, one has only a non-uniform family of pointwise finite-query simulations, not a Type-2 implementation of the transport. Here, represented spaces and realizers are used in the standard sense of computable analysis, see e.g. \cite[Def.~2.1, Def.~2.2]{BrattkaPauly2018}. The uniformity requirement in the target-evaluation index is the transport-side analogue of the uniformity requirement for implemented SCI towers in \cite[Def.~2.11, Rem.~2.20]{Sorg26Foundations}.
	\end{enumerate}
\end{definition}

The TTE item should be read as a modality for finite-query transports, not as a replacement definition of Weihrauch reducibility. More precisely we note

\subsection{TTE Finite-Query Transport And Strong Weihrauch Reducibility}\label{subsec:TTEfqtransportsW}

The TTE finite-query modality should not be identified with ordinary Weihrauch reducibility. The precise relation is the following: a TTE finite-query transport is exactly a strong Weihrauch reduction between the represented target maps whose Weihrauch preprocessor is induced by an SCI instance encoding and whose encoded target interface admits a uniformly computable finite trace through the source interface.

Throughout this subsection, let
\[
\mathbf S=(\Psi,\Omega_S,Y_S,\Lambda_S), \qquad
\mathbf P=(\Xi,\Omega_P,Y_P,\Lambda_P)
\]
be represented typed SCI computational problems. We assume that the instance spaces, output spaces, and all evaluation value spaces are represented spaces, and
that the evaluation families are effectively indexed, i.e.
\[
\Lambda_S=(\gamma_i)_{i\in I_S}, \qquad
\Lambda_P=(\lambda_e)_{e\in I_P}.
\]
For a target evaluation \(\lambda_e\), write \(V_e^P\) for its value space. For a source evaluation \(\gamma_i\), write \(V_i^S\) for its value space.
Let
\[
\widehat{\Psi}:\Omega_S\to Y_S, \qquad
\widehat{\Xi}:\Omega_P\to Y_P
\]
denote the represented target maps obtained from \(\Psi\) and \(\Xi\).

By ``effectively indexed'' we mean that \(I_S,I_P\subseteq\mathbb N\) are sets of valid evaluation codes and that the trace machines below are required to act correctly on valid target-evaluation indices. From such an index one knows the represented value space of the corresponding evaluation.

\begin{definition}[Uniform finite interface trace]
	Let
	\[
	E:\Omega_S\to\Omega_P
	\]
	be a computable map between the represented instance spaces. We say that \(E\) admits a \textit{uniform finite \(\Lambda_P\)-trace through \(\Lambda_S\)} if there is a computable procedure which, given an index \(e\in I_P\) for a target evaluation \(\lambda_e\), outputs
	\[
	m_e\in\mathbb N_0,\qquad i(e,1),\ldots,i(e,m_e)\in I_S,
	\]
	and an index for a computable partial map
	\[
	\widehat{\vartheta}_e: \subseteq V^S_{i(e,1)}\times\cdots\times V^S_{i(e,m_e)} \to V^P_e
	\]
	whose domain contains the finite transcript image
	\[
	\operatorname{im}\bigl( \gamma_{i(e,1)},\ldots,\gamma_{i(e,m_e)} \bigr),
	\]
	such that for every \(x\in\Omega_S\),
	\[
	\lambda_e(E(x)) = \widehat{\vartheta}_e \bigl( \gamma_{i(e,1)}(x),\ldots,\gamma_{i(e,m_e)}(x) \bigr).
	\]
	For \(m_e=0\), the product value space is the one-point represented space and the formula means that \(\lambda_e(E(x))\) is reconstructed by a computable constant.
\end{definition}

\begin{theorem}[Point-extensional trace characterization of TTE finite-query transport]\label{thm:tte-fq-weihrauch-trace}
	Let \(\mathbf S\) and \(\mathbf P\) be represented typed SCI problems as above. The following are equivalent.
	
	\begin{enumerate}
		\item[\textnormal{(i)}]
		\[
		\mathbf S \leq_{\mathrm{TTE},\mathrm{fq}} \mathbf P .
		\]
		
		\item[\textnormal{(ii)}]
		There exist computable point maps
		\[
		E:\Omega_S\to\Omega_P, \qquad D:Y_P\to Y_S,
		\]
		such that
		\[
		D(\Xi(E(x)))=\Psi(x)
		\]
		for $x\in\Omega_S$,	and such that \(E\) admits a uniform finite \(\Lambda_P\)-trace through \(\Lambda_S\).
	\end{enumerate}
	
	In either case, if \(\Phi_E\) is a computable realizer of \(E\) and \(\Phi_D\) is a computable realizer of \(D\), then
	\[
	\widehat{\Psi} \leq_{\mathrm{sW}} \widehat{\Xi}
	\]
	is witnessed by the strong Weihrauch preprocessor \(\Phi_E\) and postprocessor \(\Phi_D\). Explicitly, for every realizer \(G\vdash\widehat{\Xi}\),
	\[
	\Phi_D\circ G\circ \Phi_E
	\]
	is a realizer of \(\widehat{\Psi}\).
\end{theorem}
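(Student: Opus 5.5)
The equivalence is essentially a matter of unpacking definitions, so I will prove (i)$\Rightarrow$(ii) and (ii)$\Rightarrow$(i) by matching the clauses of \cref{def:regularity-controlled-modalities}(v) against the definition of a uniform finite interface trace, and then verify the strong Weihrauch statement separately.

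For (i)$\Rightarrow$(ii), I take a TTE finite-query transport witnessed by $E$, $D$ and reconstruction data. Clauses (a) and (b) say that $E$ and $D$ have computable realizers, i.e.\ they are computable point maps. The output identity $D(\Xi(E(x)))=\Psi(x)$ is part of \cref{def:modal-fq}. Clause (c) gives, uniformly in a code for $\lambda\in\Lambda_P$, the number $m_\lambda$, the source evaluations $\gamma_{\lambda,k}$, and an index for a computable partial map $\widehat\vartheta_\lambda$ whose domain contains the transcript image and which satisfies the reconstruction identity. Under the effective indexings $\Lambda_P=(\lambda_e)$ and $\Lambda_S=(\gamma_i)$, "computing $\gamma_{\lambda,k}$" means computing indices $i(e,k)\in I_S$. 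So this is verbatim a uniform finite $\Lambda_P$-trace of $E$ through $\Lambda_S$.

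For (ii)$\Rightarrow$(i), I read the same identification backwards. The trace procedure supplies clause (c), computable point maps supply (a) and (b), and the output identity is assumed. The only point to check is that \cref{def:modal-fq} requires each $\vartheta_\lambda$ to be a map on $\operatorname{im}(\vec\gamma)$. I obtain it by restricting $\widehat\vartheta_e$ to the transcript image, which is legitimate because the domain contains that image. The case $m_e=0$ is handled by the one-point convention of \cref{def:finite-transcript}, with a computable constant.

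For the strong Weihrauch consequence, let $G\vdash\widehat\Xi$ and let $p$ be a name of $x\in\Omega_S$. Then $\Phi_E(p)$ names $E(x)$ because $\Phi_E$ realizes $E$. Next, $G(\Phi_E(p))$ names $\Xi(E(x))$ because $G$ realizes $\widehat\Xi$. Finally, $\Phi_D(G(\Phi_E(p)))$ names $D(\Xi(E(x)))=\Psi(x)$. Hence $\Phi_D\circ G\circ\Phi_E\vdash\widehat\Psi$. Since $\Phi_E$ and $\Phi_D$ are computable and do not depend on $G$, and the postprocessor never receives the original input $p$, this is exactly a strong Weihrauch reduction $\widehat\Psi\le_{\mathrm{sW}}\widehat\Xi$. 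Here I treat $\widehat\Psi$ and $\widehat\Xi$ as single-valued problems, so every realizer must output a name of the unique value.

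I expect the main obstacle to be bookkeeping rather than mathematics. One issue is making precise that "effectively indexed" lets clause (c)'s output "$\gamma_{\lambda,k}\in\Lambda_S$" be read as indices in $I_S$. The other is that validity is only demanded on $I_P$, so both directions must quantify over valid codes only. I would state this convention explicitly at the outset. The remaining claims, including that the trace is forgotten in passing to $\le_{\mathrm{sW}}$, are immediate from the construction.
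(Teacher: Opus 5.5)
Your proposal is correct and follows the paper's proof essentially verbatim. As in the paper, both directions of the equivalence come from matching clauses (a)--(c) of the TTE modality against the definition of a uniform finite interface trace, and the strong Weihrauch claim comes from chasing a name $p$ through $\Phi_E$, $G$, and $\Phi_D$, noting that the postprocessor never sees $p$. Your extra bookkeeping only makes explicit what the paper leaves implicit: restricting $\widehat\vartheta_e$ to the transcript image, reading source evaluations as indices in $I_S$, and quantifying only over valid codes.
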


\begin{proof}
	Assume first that \(\mathbf S \leq_{\mathrm{TTE},\mathrm{fq}} \mathbf P\). By definition of the TTE finite-query modality, the transport is witnessed by a computably realized
	encoding
	\[
	E:\Omega_S\to\Omega_P,
	\]
	a computably realized decoder
	\[
	D:Y_P\to Y_S,
	\]
	and, uniformly in an index \(e\) for each target evaluation \(\lambda_e\in\Lambda_P\), a finite list of source evaluations
	\[
	\gamma_{i(e,1)},\ldots,\gamma_{i(e,m_e)}
	\]
	together with an index for a computable partial reconstruction map
	\[
	\widehat{\vartheta}_e: \subseteq V^S_{i(e,1)}\times\cdots\times V^S_{i(e,m_e)} \to V^P_e
	\]
	such that
	\[
	\lambda_e(E(x)) = \widehat{\vartheta}_e \bigl( \gamma_{i(e,1)}(x),\ldots,\gamma_{i(e,m_e)}(x) \bigr)
	\]
	for all \(x\in\Omega_S\). This is exactly a uniform finite \(\Lambda_P\)-trace through \(\Lambda_S\). The output identity in the definition of \(\mathbf S \leq_{\mathrm{TTE},\mathrm{fq}} \mathbf P\) is precisely
	\[
	D(\Xi(E(x)))=\Psi(x).
	\]
	Hence (ii) holds.
	
	Conversely, assume (ii). The computable maps \(E\) and \(D\) supply the TTE-admissible encoding and decoder. The uniform finite trace supplies, uniformly in every target evaluation index \(e\), the finite source transcript and computable partial reconstruction map required in the definition of \(\leq_{\mathrm{TTE},\mathrm{fq}}\). The output identity gives the target identity condition. Therefore
	\[
	\mathbf S\leq_{\mathrm{TTE},\mathrm{fq}} \mathbf P.
	\]
	
	It remains only to verify the Weihrauch statement. Let \(p\) be a name of \(x\in\Omega_S\). Since \(\Phi_E\) realizes \(E\), the name \(\Phi_E(p)\) is a name of \(E(x)\). If \(G\vdash\widehat{\Xi}\), then \(G(\Phi_E(p))\) is a name of \(\Xi(E(x))\). Since \(\Phi_D\) realizes \(D\), the value
	\[
	\Phi_D(G(\Phi_E(p)))
	\]
	is a name of
	\[
	D(\Xi(E(x)))=\Psi(x).
	\]
	Thus \(\Phi_D\circ G\circ \Phi_E\) realizes \(\widehat{\Psi}\). The postprocessor \(\Phi_D\) does not use the original source name \(p\), so the reduction is strong Weihrauch reducibility.
\end{proof}

\begin{corollary}[The forgetful implication is not reversible]\label{cor:sW-not-tte-fq}
	The implication
	\[
	\mathbf S\leq_{\mathrm{TTE},\mathrm{fq}} \mathbf P \quad\Longrightarrow\quad \widehat{\Psi}\leq_{\mathrm{sW}}\widehat{\Xi}
	\]
	is strict. In fact, there are represented SCI problems \(\mathbf S\) and \(\mathbf P\) such that
	\[
	\widehat{\Psi}\equiv_{\mathrm{sW}} \widehat{\Xi},
	\]
	but
	\[
	\mathbf S \nleq_{\mathrm{raw},\mathrm{fq}} \mathbf P,
	\]
	and hence also
	\[
	\mathbf S \nleq_{\mathrm{TTE},\mathrm{fq}} \mathbf P.
	\]
\end{corollary}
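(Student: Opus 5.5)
The plan is to build an explicit toy pair of represented SCI problems in which the represented target maps coincide, so that strong Weihrauch equivalence is trivial, but the interfaces are arranged so that no finite-query reconstruction exists. Since the TTE modality is stricter than the raw modality (every TTE transport is in particular a raw transport, with computable decoder hence continuous, and computable reconstructions being maps on transcript images), it suffices to refute \(\mathbf S\le_{\raw,\fq}\mathbf P\).

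Concretely, let both instance spaces be Baire space \(\Omega_S=\Omega_P=\mathbb N^{\mathbb N}\) with the identity representation, and let \(Y_S=Y_P=\mathbb N^{\mathbb N}\) with the discrete-coordinate ultrametric and the identity representation. Take \(\Psi=\Xi=\operatorname{id}\). Then \(\widehat\Psi=\widehat\Xi\), so \(\widehat\Psi\equiv_{\mathrm{sW}}\widehat\Xi\) with identity pre- and postprocessors. For the interfaces, take \(\Lambda_P=\{\lambda_n:n\in\mathbb N\}\) with \(\lambda_n(x)=x(n)\), and give \(\mathbf S\) a deliberately poor interface, \(\Lambda_S=\{\gamma_0\}\) with \(\gamma_0\equiv 0\) constant. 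The consistency condition in \cref{def:sci-problem} then fails for \(\mathbf S\). To avoid this I will instead take \(\Lambda_S=\{\gamma\}\) where \(\gamma(x)=\sum_n x(n)\) when this is finite and otherwise \(\gamma\) is coded injectively into \(\mathbb C\) (an arbitrary injection \(\mathbb N^{\mathbb N}\hookrightarrow\mathbb C\) exists by cardinality). With this choice consistency holds but the map is not a legitimate finite-information interface for \(\mathbf P\). This turns out to be unhelpful, since a single injective query reconstructs everything. The plan therefore switches to the cleaner obstruction: make \(\mathbf S\)'s evaluations each carry strictly less information than any encoding could need.

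The refined choice is as follows. Keep \(\Omega_S=\Omega_P=\mathbb N^{\mathbb N}\) and \(\Psi=\Xi=\operatorname{id}\), let \(\Lambda_S=\{\lambda_n\}\) be the coordinate evaluations, and let \(\Lambda_P=\{\lambda_n\}\cup\{\tau\}\) with \(\tau(x)=1\) if \(x\) is eventually zero and \(0\) otherwise. Consistency is clear for both. Now suppose \(\mathbf S\le_{\raw,\fq}\mathbf P\) via \((E,D,\vartheta)\). The key step is to show that the identity \(D\circ E=\operatorname{id}\) forces \(E\) to be injective. Next, \(\tau(E(x))\) must equal \(\vartheta_\tau(x(i_1),\dots,x(i_m))\) for finitely many coordinates, so \(\tau\circ E\) depends on finitely many coordinates. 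This alone is not contradictory, since \(E\) could be chosen with \(\tau\circ E\) constant, so I will tie \(\tau\) to the target: let \(\Xi(x)=\tau(x)\in\{0,1\}\) and \(\Psi(x)=\tau(x)\) as well, with the same represented spaces.

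Here the main obstacle appears: strong Weihrauch equivalence is again trivial since the maps are equal, but a raw reduction now exists via \(E=\operatorname{id}\) and identity reconstructions. The asymmetry must therefore come from the interfaces alone, while the represented maps stay equal. The decisive choice I will commit to is to give \(\mathbf P\) the interface \(\Lambda_P=\{\lambda_n\}\cup\{\tau\}\), so that its target is \(0\)-computable from one query, and to give \(\mathbf S\) the interface \(\{\lambda_n\}\), with both targets equal to \(\tau\) on Baire space. Then \(\widehat\Psi=\widehat\Xi\) gives \(\equiv_{\mathrm{sW}}\). If \(\mathbf S\le_{\raw,\fq}\mathbf P\), the lower-bound transfer \cite[Thm.~4.10]{Sorg26Witness} yields \(\SCIG(\mathbf S)\le\SCIG(\mathbf P)=0\). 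However, \(\tau\) from coordinates is not height \(0\), because a general algorithm reads finitely many coordinates and cannot decide eventual-zeroness. This contradiction completes the argument, and the hardest verification is just the routine height-\(0\) impossibility.
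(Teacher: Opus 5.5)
Your committed construction (the final one) is correct, but it takes a different route from the paper. The paper keeps the common target equal to \(\operatorname{id}_{\mathcal C}\) on Cantor space, gives \(\mathbf S\) the coordinate projections, and adds to \(\Lambda_P\) a single computable \emph{injection} \(\tau:\mathcal C\to[0,1]\). The contradiction is then an injectivity argument: \(D\circ E=\operatorname{id}\) makes \(E\) injective, hence \(\tau\circ E\) is injective, yet \(\tau\circ E\) must factor through finitely many coordinates.

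You instead take the common target to be the eventually-zero predicate on Baire space and put that predicate itself into \(\Lambda_P\). Then \(\SCIG(\mathbf P)=0<\SCIG(\mathbf S)\), and \cref{lem:raw-fq-monotonicity} rules out \(\mathbf S\le_{\raw,\fq}\mathbf P\). Even more directly, the output identity would give \(\tau(x)=D(\vartheta_\tau(x(i_1),\dots,x(i_m)))\). This fails when you compare the zero sequence with a sequence that agrees with it on \(i_1,\dots,i_m\) and equals \(1\) elsewhere.

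Your route makes a general recipe visible. Any two problems with identical represented target maps but different raw heights separate the two notions, because \(\le_{\raw,\fq}\) is \(\SCIG\)-monotone while \(\le_{\mathrm{sW}}\) does not see interfaces. The paper's example is secretly of this kind too: a single \(\tau\)-query determines \(x\), so \(\SCIG(\mathbf P)=0<\SCIG(\mathbf S)\) there as well.

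What the paper's choice buys is effectivity and robustness. Every evaluation and the common target map are computable, so the obstruction visibly has nothing to do with noncomputability. Moreover, \(\widehat\Psi\equiv_{\mathrm{sW}}\widehat\Xi\) survives even if \(\mathbf P\) is given the evaluation-name representation, since \(\tau\) is computable from a Cantor name. Your \(\tau\) is discontinuous on Baire space, so your witness depends on naming \(\mathbf P\)-instances by plain coordinate sequences. With evaluation names, \(\widehat\Xi\) would become computable and \(\widehat\Psi\le_{\mathrm{sW}}\widehat\Xi\) would fail. The paper's definitions allow this decoupling of representation and interface, so your proof stands, but yours is the less robust witness.

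In a write-up, delete the false starts. In particular, you claim that a raw reduction ``now exists via \(E=\operatorname{id}\) and identity reconstructions'', and this is wrong for the very configuration you then adopt: it would require reconstructing \(\tau(x)\) from finitely many coordinates. \(E=\operatorname{id}\) only witnesses the easy direction \(\mathbf P\le_{\raw,\fq}\mathbf S\). Also, the paper's convention is \(\mathbb N=\{1,2,3,\dots\}\), so write \(\mathbb N_0^{\mathbb N}\) for Baire space. Otherwise ``eventually zero'' is vacuous and \(\tau\) is constant.
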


\begin{proof}
	Let
	\[
	\mathcal C:=\{0,1\}^{\mathbb N_0}
	\]
	be the Cantor space with its standard representation. For \(n\in\mathbb N_0\), let
	\[
	\pi_n:\mathcal C\to\{0,1\}, \qquad \pi_n(x):=x(n),
	\]
	be the coordinate projections. Define also the computable injective map
	\[
	\tau:\mathcal C\to[0,1], \qquad
	\tau(x):=\sum_{n=0}^{\infty} \frac{2x(n)}{3^{n+1}}.
	\]
	The map \(\tau\) is injective: if \(x\neq y\) and \(n_0\) is the least index with \(x(n_0)\neq y(n_0)\), then
	\[
	|\tau(x)-\tau(y)| \geq \frac{2}{3^{n_0+1}} - \sum_{n>n_0}\frac{2}{3^{n+1}} = \frac{1}{3^{n_0+1}} >0.
	\]
	
	Define the two represented typed SCI problems
	\[
	\mathbf S := \bigl( \operatorname{id}_{\mathcal C}, \mathcal C, \mathcal C, \{\pi_n : n\in\mathbb N_0\} \bigr)
	\]
	and
	\[
	\mathbf P := \bigl( \operatorname{id}_{\mathcal C}, \mathcal C, \mathcal C, \{\pi_n : n\in\mathbb N_0\}\cup\{\tau\} \bigr).
	\]
	The value space of each \(\pi_n\) is the discrete represented space \(\{0,1\}\), and the value space of \(\tau\) is \([0,1]\) with its usual Cauchy representation.
	
	After forgetting the SCI interfaces, both represented target maps are just
	\[
	\operatorname{id}_{\mathcal C} : \mathcal C\to\mathcal C.
	\]
	Hence
	\[
	\widehat{\Psi}\equiv_{\mathrm{sW}} \widehat{\Xi}
	\]
	is witnessed in both directions by the identity preprocessor and identity postprocessor.
	
	We now show that nevertheless
	\[
	\mathbf S \nleq_{\mathrm{raw},\mathrm{fq}} \mathbf P.
	\]
	Suppose, toward a contradiction, that there is a raw finite-query transport from \(\mathbf S\) to \(\mathbf P\). Let
	\[
	E:\mathcal C\to\mathcal C, \qquad D:\mathcal C\to\mathcal C
	\]
	be its encoding and decoder. Since the two target maps are identities, the output identity of the transport says
	\[
	D(E(x))=x,
	\]
	where $x\in\mathcal C$.	In particular, \(E\) is injective.
	
	Now apply the finite-query reconstruction requirement to the target evaluation
	\[
	\tau\in\Lambda_P.
	\]
	There must be finitely many source evaluations
	\[
	\pi_{n_1},\ldots,\pi_{n_m}
	\]
	and a reconstruction map
	\[
	\vartheta: \operatorname{im}(\pi_{n_1},\ldots,\pi_{n_m}) \to[0,1]
	\]
	such that for every \(x\in\mathcal C\),
	\[
	\tau(E(x)) = \vartheta(x(n_1),\ldots,x(n_m)).
	\]
	Choose distinct \(x,x'\in\mathcal C\) such that
	\[
	x(n_j)=x'(n_j)
	\]
	for $j=1,\ldots,m$.	Then the preceding identity gives
	\[
	\tau(E(x))=\tau(E(x')).
	\]
	Since \(\tau\) is injective, this implies
	\[
	E(x)=E(x').
	\]
	Applying \(D\) and using \(D\circ E=\operatorname{id}_{\mathcal C}\), we get
	\[
	x=D(E(x))=D(E(x'))=x',
	\]
	contradicting the choice of \(x\neq x'\). Hence no raw finite-query transport exists. Since every TTE finite-query transport is, in particular, a raw finite-query transport after forgetting effectivity,
	\[
	\mathbf S \nleq_{\mathrm{TTE},\mathrm{fq}} \mathbf P.
	\]
\end{proof}

\begin{remark}
	The obstruction in \cref{cor:sW-not-tte-fq} is \textit{not} computability of the represented target maps: the represented target maps are identical. The obstruction is the extra SCI-interface requirement. Strong Weihrauch reducibility sees only the target maps \(\widehat{\Psi}\) and \(\widehat{\Xi}\). TTE finite-query transport also sees the target evaluation interface \(\Lambda_P\) and requires every evaluation of the encoded target instance \(E(x)\) to be uniformly reconstructible from finitely many source evaluations of \(x\).
	
	Ordinary Weihrauch reducibility is still further away from the finite-query transport notion. Its postprocessor may use the original source name. To model ordinary Weihrauch reducibility on the SCI-transport side, one would need a different transport modality with an input-dependent decoder, for example a computably realized map
	\[
	D:\Omega_S\times Y_P\to Y_S,
	\]
	or equivalently a name-level postprocessor of the form
	\[
	K(p,G(H(p))).
	\]
	The finite-query transports used in this paper use the strong, input-free decoder form because this is the form compatible with pullback of raw type-G towers.
\end{remark}

\subsection{Representation-Preserving And Geometric Refinements}
Regularity and naturality are independent axes. A map may be computable but geometrically artificial, or geometrically natural but based on noncomputable fixed data. The following refinement construction lets us impose naturality on top of any base regularity modality.

\begin{definition}[Representation-preserving and geometric refinements of a base modality]\label{def:rep-geom-refinements}
	Let \(\mathfrak a\) be a base modality such as
	\[
	\raw,\qquad \cont,\qquad \BorcD,\qquad \TTE .
	\]
	Assume that for every pair of typed SCI computational problems \(\mathbf S, \mathbf P\) we have specified two classes of encodings
	\[
	\mathcal R(\mathbf S, \mathbf P)\subseteq \mathcal G(\mathbf S, \mathbf P) \subseteq \{E:\Omega_S\to\Omega_P\},
	\]
	where
	\begin{itemize}
		\item \(\mathcal R(\mathbf S, \mathbf P)\) is the class of representation-preserving encodings;
		\item \(\mathcal G(\mathbf S, \mathbf P)\) is the class of geometric encodings.
	\end{itemize}
	We assume that \(\mathcal R\) and \(\mathcal G\) contain identities and are closed under composition.
	
	The representation-preserving refinement of \(\mathfrak a\) is the modality $\rep^{\mathfrak a}$ defined by
	\[
	\mathcal E_{\rep^{\mathfrak a}}(\mathbf S, \mathbf P) := \mathcal E_{\mathfrak a}(\mathbf S, \mathbf P)\cap\mathcal R(\mathbf S, \mathbf P),
	\]
	\[
	\mathcal D_{\rep^{\mathfrak a}}(\mathbf P, \mathbf S) := \mathcal D_{\mathfrak a}(\mathbf P, \mathbf S),
	\]
	\[
	\Theta_{\rep^{\mathfrak a}} := \Theta_{\mathfrak a}.
	\]
	Similarly, the geometric refinement of \(\mathfrak a\) is the modality $\geom^{\mathfrak a}$ defined by
	\[
	\mathcal E_{\geom^{\mathfrak a}}(\mathbf S, \mathbf P) :=
	\mathcal E_{\mathfrak a}(\mathbf S, \mathbf P)\cap\mathcal G(\mathbf S, \mathbf P),
	\]
	\[
	\mathcal D_{\geom^{\mathfrak a}}(\mathbf P, \mathbf S) := \mathcal D_{\mathfrak a}(\mathbf P, \mathbf S),
	\]
	\[
	\Theta_{\geom^{\mathfrak a}} := \Theta_{\mathfrak a}.
	\]
	When \(\mathfrak a=\raw\), we abbreviate
	\[
	\rep:=\rep^{\raw}, \qquad \geom:=\geom^{\raw}.
	\]
\end{definition}

The bare symbols \(\rep\) and \(\geom\) mean representation-preserving and geometric refinements of the raw modality. Later, the CH23 geometric modality will be a concrete choice of the abstract class \(\mathcal G(\mathbf S, \mathbf P)\).

\subsection{The Modality Poset}
We first check that the calibrated modalities really satisfy the abstract admissibility axioms.

\begin{proposition}[Admissibility of the calibrated modalities]\label{prop:calibrated-modalities-admissible}
	Under the closure assumptions stated above, the modalities
	\[
	\raw,\quad \cont,\quad \BorcD,\quad \BorD,\quad \Borfull,\quad \TTE,
	\]
	and their representation-preserving and geometric refinements
	\[
	\rep^{\mathfrak a}, \qquad \geom^{\mathfrak a},
	\]
	are admissible finite-query transport modalities.
\end{proposition}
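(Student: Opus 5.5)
The plan is to verify (A1)--(A3) of \cref{def:admissible-modality} modality by modality, organised by the three admissibility classes rather than by modality. Each of the classes \(\mathcal E\), \(\mathcal D\), \(\Theta\) is specified by a regularity property: no restriction, continuity, Borel measurability, or computable realizability. So the checks reduce to three facts about each property. It must hold for identity maps, be stable under composition, and be stable under the substitution \(\theta=\varphi\circ\Psi_{\vec\psi}\) of (A3).

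For (A1) and (A2) everything is standard. Identities are continuous, Borel and computable. Compositions of continuous maps are continuous, compositions of Borel maps are Borel, and compositions of computably realized maps are computably realized by composing realizers. For the decoders of \(\BorcD\) and \(\cont\), continuity composes. For \(\rep^{\mathfrak a}\) and \(\geom^{\mathfrak a}\), the encoding class is an intersection \(\mathcal E_{\mathfrak a}\cap\mathcal R\) (resp.\ \(\cap\,\mathcal G\)) of two classes, each containing identities and closed under composition. By the standing assumption of \cref{def:rep-geom-refinements} the intersection inherits both properties, and decoders and reconstructions are those of \(\mathfrak a\), so admissibility of the refinement follows from admissibility of the base.

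For (A3), the raw and \(\BorD\) cases are trivial since all reconstructions are allowed. For \(\cont\), \(\Psi_{\vec\psi}\) is continuous into the product, because each component \(\psi_i\) is continuous on \(\operatorname{im}(\vec\gamma_i)\). The coordinate projection \(\operatorname{im}(\vec\gamma)\to\operatorname{im}(\vec\gamma_i)\) is continuous for subspace topologies, but one must check that it actually lands in \(\operatorname{im}(\vec\gamma_i)\). It does: a point of \(\operatorname{im}(\vec\gamma)\) is \(\vec\gamma(x)\) for some \(x\), and its block is \(\vec\gamma_i(x)\). Since \(\Psi_{\vec\psi}\) lands in \(\operatorname{im}(\vec\beta)\) by hypothesis, corestriction to the subspace keeps continuity and \(\varphi\circ\Psi_{\vec\psi}\) is continuous. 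For the Borel modalities, the same argument is run with trace \(\sigma\)-algebras (\cref{def:trace-transcript-structure}). Projections are measurable for the trace structures. A map into a product is measurable for the product \(\sigma\)-algebra iff its components are. Corestriction to \(\operatorname{im}(\vec\beta)\) with its trace \(\sigma\)-algebra is measurable because trace sets are \(B\cap\operatorname{im}(\vec\beta)\).

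The main obstacle I expect is this Borel step. The trace \(\sigma\)-algebra uses \(\mathcal B(V_{\vec\gamma})\), the Borel algebra of the product, and for non-second-countable value spaces this can strictly exceed the product \(\sigma\)-algebra. I would make this explicit as part of the standing closure assumptions: value spaces are second countable, e.g.\ Polish, discrete countable, or \(\bC\), so that \(\mathcal B(\prod V)=\bigotimes\mathcal B(V)\). For the \(\TTE\) case, (A3) must also be made uniform. Given codes for \(\lambda\), the machine computes the \(\vec\beta\)-trace and an index for \(\widehat\varphi\). It then computes, for each \(\beta_i\), the \(\vec\gamma_i\)-trace and an index for \(\widehat\psi_i\), concatenates the transcripts, and uses the s-m-n/composition of partial computable realizers to get an index for \(\widehat\varphi\circ(\widehat\psi_1,\dots,\widehat\psi_r)\). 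Its domain contains \(\operatorname{im}(\vec\gamma)\) by the image hypothesis of (A3). This uniformity is what transitivity in \cref{prop:modal-preorder} needs for \(\TTE\).
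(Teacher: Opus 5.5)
Your proposal is correct and follows essentially the paper's proof. Both handle (A1)--(A2) by identity and composition closure of the regularity class, and (A3) by continuity or trace-measurability of the substitution map \(\Psi_{\vec\psi}\) via its coordinates, by uniform composition of indices for \(\TTE\), and by intersecting with \(\mathcal R\) or \(\mathcal G\) for the refinements. One addition of yours is worth keeping: the requirement \(\mathcal B(\prod V)=\bigotimes\mathcal B(V)\), for example for second countable value spaces. The paper's Borel step (``the product sigma algebra is generated by coordinate cylinders'') uses this equality only implicitly, through its ``standard measurable structures'' assumption.
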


\begin{proof}
	We verify the three conditions of \cref{def:admissible-modality}.
	
	\smallskip
	\noindent \textbf{The raw modality:}
	For \(\raw\), every encoding is admissible, every finite transcript reconstruction is admissible, and the decoder class is the class of continuous maps between output metric
	spaces. Identity encodings and identity reconstructions are therefore admissible. The identity decoder is continuous, and compositions of continuous decoders are continuous.
	Since all reconstruction maps are allowed, the composite reconstruction in \textup{(A3)} is automatically raw-admissible. Hence \(\raw\) is admissible.
	
	\smallskip
	\noindent \textbf{The continuous modality:}
	For \(\cont\), identity encodings, decoders, and transcript reconstructions are continuous. Compositions of continuous encodings and decoders are continuous. For reconstructions suppose
	\[
	\varphi:\operatorname{im}(\vec\beta)\to V_\lambda
	\]
	is continuous and each
	\[
	\psi_i:\operatorname{im}(\vec\gamma_i)\to V_{\beta_i}
	\]
	is continuous. The product map
	\[
	\Psi: \operatorname{im}(\vec\gamma) \to \operatorname{im}(\vec\beta)
	\]
	defined by
	\[
	\Psi(t_{1,1},\ldots,t_{r,m_r}) := (\psi_1(t_{1,1},\ldots,t_{1,m_1}),\ldots, \psi_r(t_{r,1},\ldots,t_{r,m_r}))
	\]
	is continuous for the subspace product topologies. Hence
	\[
	\theta:=\varphi\circ\Psi
	\]
	is continuous. Thus \(\cont\) is admissible.
	
	\smallskip
	\noindent \textbf{The Borel-with-continuous-decoder modality:}
	For	\(\Bor_{E,\vartheta;D=\mathrm{cont}}\),	identity encodings and identity reconstructions are Borel, and identity decoders are continuous. Compositions of Borel encodings are Borel, and compositions of continuous decoders are continuous.
	
	For transcript reconstructions, use the trace measurable structure from \cref{def:trace-transcript-structure}. If
	\[
	\varphi:\operatorname{im}(\vec\beta)\to V_\lambda
	\]
	is Borel and each
	\[
	\psi_i:\operatorname{im}(\vec\gamma_i)\to V_{\beta_i}
	\]
	is Borel, then the product map
	\[
	\Psi: \operatorname{im}(\vec\gamma) \to \operatorname{im}(\vec\beta)
	\]
	defined as in the $\cont$ case is measurable with respect to the trace sigma algebras. Indeed, for each coordinate projection \(\pi_i\) on
	\[
	V_{\beta_1}\times\cdots\times V_{\beta_r},
	\]
	the coordinate map
	\[
	\pi_i\circ\Psi=\psi_i\circ\pi_{\vec\gamma_i}
	\]
	is measurable. Since the product sigma algebra is generated by coordinate cylinders, \(\Psi\) is measurable. Hence
	\[
	\theta=\varphi\circ\Psi
	\]
	is Borel. Thus \(\Bor_{E,\vartheta;D=\mathrm{cont}}\) is admissible.
	
	\smallskip
	\noindent \textbf{The decoder-only Borel modality:}
	For \(\Bor_D\), encodings and transcript reconstructions are unrestricted. The decoder class consists of Borel maps. Identity maps are Borel, and compositions of Borel maps
	are Borel. Since reconstructions are unrestricted, the composite reconstruction is automatically admissible. Hence \(\Bor_D\) is admissible.
	
	\smallskip
	\noindent \textbf{The full Borel modality:}
	For \(\Bor_{\mathrm{full}}\), encodings, decoders, and transcript reconstructions are all Borel. Identity maps are Borel and compositions of Borel maps are Borel. The transcript
	part is handled exactly as in the \(\Bor_{E,\vartheta;D=\mathrm{cont}}\) case. Hence \(\Bor_{\mathrm{full}}\) is admissible.
	
	\smallskip
	\noindent \textbf{The TTE modality:}
	For \(\TTE\), identity maps on represented spaces have computable realizers, and compositions of computably realized maps have computable realizers. Hence identity and
	composition closure hold for encodings and decoders.
	
	For reconstructions, suppose a target evaluation \(\lambda\) of \(\mathbf Q\) is uniformly reconstructed from \(\mathbf P\)-evaluations
	\[
	\beta_1,\ldots,\beta_r
	\]
	by a computable partial map \(\widehat\varphi\). Suppose each \(\beta_i\) is uniformly reconstructed from \(\mathbf S\)-evaluations
	\[
	\gamma_{i,1},\ldots,\gamma_{i,m_i}
	\]
	by a computable partial map \(\widehat\psi_i\). From an index for \(\lambda\), the TTE data compute the list of \(\beta_i\)'s and an index for \(\widehat\varphi\). From each
	index for \(\beta_i\), the TTE data compute the list of \(\gamma_{i,j}\)'s and an index for \(\widehat\psi_i\). By effective pairing, finite list concatenation, and effective composition of partial computable maps on represented spaces, one computes uniformly an index for
	\[
	\widehat\theta := \widehat\varphi\circ (\widehat\psi_1,\ldots,\widehat\psi_r).
	\]
	Its domain contains the relevant transcript image, and on that image it gives the required composite reconstruction. Therefore the TTE modality is admissible.
	
	\smallskip
	\noindent \textbf{Representation-preserving and geometric refinements:}
	Let \(\mathfrak a\) be one of the admissible base modalities. By assumption, the representation-preserving encoding classes \(\mathcal R(\mathbf S, \mathbf P)\) contain identities and are closed under composition; the same holds for the geometric classes \(\mathcal G(\mathbf S, \mathbf P)\). Therefore intersecting the encoding class of \(\mathfrak a\) with \(\mathcal R\) or with \(\mathcal G\) preserves identity and composition closure. The decoder and reconstruction classes are inherited from \(\mathfrak a\), already shown admissible. Hence \(\rep^{\mathfrak a}\) and \(\geom^{\mathfrak a}\) are admissible.
\end{proof}

The next theorem records the basic order relations among the modalities. It is a partial order, not a single hierarchy: computable regularity and geometric naturality constrain different aspects of a transport.

For the proof of the theorem we shortly recall that computable realizability implies represented-space continuity, see \cite[Def.~2.1 and the discussion after it]{BrattkaPauly2018}.

\begin{theorem}[Basic modality-order calibration]\label{thm:basic-modality-order-calibration}
	Assume that the topologies used in the continuous modality are the represented-space topologies induced by the chosen representations, or at least that every computably
	realized map in the TTE modality is continuous for the topologies used in the continuous modality. Then the following modality-order relations hold.
	
	\begin{enumerate}[label=\textup{(\alph*)}]
		\item
		\[
		\TTE \preceq \cont \preceq \BorcD \preceq \raw .
		\]
		
		\item
		\[
		\raw \preceq \BorD .
		\]
		
		\item For every base modality
		\[
		\mathfrak a\in \{\raw,\cont,\BorcD,\TTE\},
		\]
		one has
		\[
		\rep^{\mathfrak a} \preceq \geom^{\mathfrak a} \preceq \mathfrak a .
		\]
		In particular,
		\[
		\RepTTE \preceq \GeomTTE \preceq \TTE \preceq \cont \preceq \BorcD \preceq \raw,
		\]
		and
		\[
		\RepCont \preceq \GeomCont \preceq \cont \preceq \raw,
		\]
		and
		\[
		\rep \preceq \geom \preceq \raw .
		\]
		
		\item The full Borel modality $\Borfull$ is not canonically comparable with $\raw$ in general.
	\end{enumerate}
\end{theorem}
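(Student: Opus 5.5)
The plan is to verify each relation $\mathfrak m\preceq\mathfrak n$ by a single inclusion argument. If, for every pair $(\mathbf S,\mathbf P)$, one has
\[
\mathcal E_{\mathfrak m}\subseteq\mathcal E_{\mathfrak n},\qquad \mathcal D_{\mathfrak m}\subseteq\mathcal D_{\mathfrak n},\qquad \Theta_{\mathfrak m}(\lambda;\vec\gamma)\subseteq\Theta_{\mathfrak n}(\lambda;\vec\gamma),
\]
then any witness $(E,D,(\vec\gamma_\lambda,\vartheta_\lambda)_\lambda)$ for $\mathbf S\lemfq{\mathfrak m}\mathbf P$ is literally a witness for $\mathbf S\lemfq{\mathfrak n}\mathbf P$. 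The output identity and the reconstruction identities of \cref{def:modal-fq} do not refer to the modality, so they carry over unchanged. Transitivity of $\preceq$ is immediate from its definition, so all the displayed chains in (c) follow once the individual links are established. The proof therefore reduces to checking these class inclusions link by link.

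For (a), the link $\TTE\preceq\cont$ uses the standing assumption that computably realized maps are continuous for the represented-space topologies. This applies to $E$ and $D$. For a reconstruction, the computable partial map $\widehat\vartheta_\lambda$ is continuous on its domain. Its restriction to $\operatorname{im}(\vec\gamma)$ is then continuous for the subspace topology, and this restriction lies in $\Theta_{\cont}$. The uniformity data are simply forgotten. I expect this link to be the main obstacle. One point to settle is that the topology on each value space $V_\lambda$ and on the output space $Y_P$ used by $\cont$ must agree with the representation-induced one. For the metric output spaces I would require that the chosen representation induces the metric topology, e.g. Cauchy representations of computable metric spaces. I would state this explicitly as part of the hypothesis, which the theorem's ``or at least'' clause allows.

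The link $\cont\preceq\BorcD$ holds because continuous maps are Borel. For reconstructions, a continuous map on $\operatorname{im}(\vec\gamma)$ with the subspace topology is measurable for the trace $\sigma$-algebra. The reason is that the Borel $\sigma$-algebra of the subspace topology equals the trace of $\mathcal B(V_{\vec\gamma})$. The decoder classes coincide. The link $\BorcD\preceq\raw$ is trivial: raw admits all encodings and all reconstructions, and the decoder class is $C(Y_P,Y_S)$ in both modalities.

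For (b), $\raw$ and $\BorD$ share unrestricted encodings and reconstructions. Continuous decoders between metric spaces are Borel, so $\mathcal D_{\raw}\subseteq\mathcal D_{\BorD}$.

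For (c), $\mathcal R\subseteq\mathcal G$ gives $\mathcal E_{\mathfrak a}\cap\mathcal R\subseteq\mathcal E_{\mathfrak a}\cap\mathcal G\subseteq\mathcal E_{\mathfrak a}$, and the decoder and reconstruction classes are inherited unchanged. The displayed chains then follow by concatenating with (a).

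For (d), I read ``not canonically comparable'' as saying that neither inclusion of classes holds in general. $\Borfull$ admits Borel, discontinuous decoders that $\raw$ forbids. Conversely, $\raw$ admits non-Borel encodings and reconstructions that $\Borfull$ forbids. To make this substantive, I would give a short witness pair in each direction, or else state explicitly that (d) is only the remark that the admissible classes are mutually non-nested.

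- For $\Borfull\not\preceq\raw$: take two problems with outputs $\{0,1\}$ and $[0,1]$ where the only possible decoder is a Borel step function, so that no continuous decoder can exist.
- For $\raw\not\preceq\Borfull$: take problems on $[0,1]$ with a trivial interface where the only valid encoding must be a fixed non-Borel bijection.
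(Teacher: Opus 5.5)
Your proposal is correct and follows essentially the same route as the paper. Parts (a)--(c) go by inclusion of the admissible encoding, decoder, and reconstruction classes, and part (d) uses two explicit witnesses of the same kind the paper uses: a Borel step-function decoder out of a connected output space, and a non-Borel map forced as $D\circ E$.

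When you write out (d), justify the first witness by connectedness rather than by ``the only possible decoder''. Since the raw modality leaves $E$ free, the relevant fact is that every continuous $D$ from $[0,1]$ (or $\mathbb R$) to $\{0,1\}$ is constant, whatever $E$ is chosen. For the second witness no bijection is needed. The paper takes $\Psi=\chi_A$ for a non-Borel $A\subseteq[0,1]$, with source interface $\{\operatorname{id}_{[0,1]}\}$, so that the raw reconstruction can be $\chi_A$ itself. The identity $D\circ E=\chi_A$ then rules out any $\Borfull$-transport.
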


\begin{proof}
	\textup{(a)}
	A TTE transport has computably realized \(E\), \(D\), and uniformly computable \(\vartheta_\lambda\).  By the standing assumption on the represented spaces, computably
	realized maps are continuous. Hence every TTE transport is a continuous transport, i.e.
	\[
	\TTE\preceq\cont .
	\]
	Every continuous map between standard topological measurable spaces is Borel. Therefore a continuous encoding and continuous transcript reconstruction are also Borel, while the decoder remains continuous. Thus
	\[
	\cont\preceq\BorcD .
	\]
	Finally, \(\BorcD\) restricts \(E\) and \(\vartheta\) but still has continuous decoder \(D\). Since the raw modality allows all encodings and all transcript reconstructions with continuous decoder, every \(\BorcD\)-transport is raw, i.e.
	\[
	\BorcD\preceq\raw .
	\]
	
	\textup{(b)}
	A raw transport has continuous decoder. Every continuous map is Borel, and \(\BorD\) imposes no restriction on encodings or transcript reconstructions. Hence every
	raw transport is a decoder-only Borel transport, i.e.
	\[
	\raw\preceq\BorD .
	\]
	
	\textup{(c)}
	By definition,
	\[
	\mathcal R(\mathbf S, \mathbf P) \subseteq \mathcal G(\mathbf S, \mathbf P).
	\]
	Therefore every representation-preserving encoding is geometric. Also every geometric encoding admitted by the \(\mathfrak a\)-refinement is, in particular, an
	\(\mathfrak a\)-admissible encoding. The decoder and reconstruction components are the same as in \(\mathfrak a\). Hence
	\[
	\rep^{\mathfrak a} \preceq \geom^{\mathfrak a} \preceq \mathfrak a .
	\]
	The chains follow by substituting
	\[
	\mathfrak a=\TTE,\cont,\raw
	\]
	and using part \textup{(a)}.
	
	\textup{(d)}
	The full Borel modality is not automatically below \(\raw\), because it allows Borel decoders that are not continuous. Here is a concrete witness. Let
	\[
	h:\mathbb R\to\{0,1\}, \qquad h(x) :=
	\begin{cases}
		0,&x\le 0,\\
		1,&x>0.
	\end{cases}
	\]
	Let
	\[
	\mathcal S_h := (h,\mathbb R,\{0,1\},\{\operatorname{id}_{\mathbb R}\})
	\]
	and
	\[
	\mathcal P_{\mathbb R}:=(\operatorname{id}_{\mathbb R},\mathbb R,\mathbb R,
	\{\operatorname{id}_{\mathbb R}\}).
	\]
	Then
	\[
	\mathcal S_h \lemfq{\Borfull} \mathcal P_{\mathbb R}
	\]
	via
	\[
	E=\operatorname{id}_{\mathbb R}, \qquad D=h,
	\]
	since \(h\) is Borel. But
	\[
	\mathcal S_h \Nlemfq{\raw} \mathcal P_{\mathbb R}.
	\]
	Indeed, any raw transport would require a continuous decoder
	\[
	D:\mathbb R\to\{0,1\}
	\]
	such that
	\[
	D(E(x))=h(x).
	\]
	Because \(\mathbb R\) is connected and \(\{0,1\}\) is discrete, every continuous \(D:\mathbb R\to\{0,1\}\) is constant. Hence \(D(E(x))\) is constant, contradicting the nonconstancy of \(h\).
	
	Conversely, \(\raw\) is not automatically below \(\Borfull\), because \(\raw\) allows non-Borel encodings and non-Borel transcript reconstructions. Let
	\[
	A\subseteq[0,1]
	\]
	be non-Borel and nontrivial, and let
	\[
	\chi_A:[0,1]\to\{0,1\}
	\]
	be its characteristic function. Define
	\[
	\mathcal S_A := (\chi_A,[0,1],\{0,1\},\{\operatorname{id}_{[0,1]}\})
	\]
	and let
	\[
	\mathcal B := (\operatorname{id}_{\{0,1\}},\{0,1\},\{0,1\}, \{\operatorname{id}_{\{0,1\}}\})
	\]
	be the bit problem. Then
	\[
	\mathcal S_A \lemfq{\raw} \mathcal B
	\]
	via the set-theoretic encoding
	\[
	E(x)=\chi_A(x)
	\]
	and the identity decoder. If
	\[
	\mathcal S_A \lemfq{\Borfull} \mathcal B
	\]
	held, then there would be a Borel encoding
	\[
	E:[0,1]\to\{0,1\}
	\]
	and a Borel decoder
	\[
	D:\{0,1\}\to\{0,1\}
	\]
	with
	\[
	D(E(x))=\chi_A(x).
	\]
	But then
	\[
	A=(D\circ E)^{-1}(\{1\})
	\]
	would be Borel, a contradiction. Thus there is no general modality order between \(\Borfull\) and \(\raw\).
\end{proof}

The non-comparability of the full Borel modality with the raw modality is a serious warning. Changing the decoder class changes the limit behavior of transported towers; therefore Borel decoder modalities should not automatically be interpreted as raw type-\(G\)-sound.

\begin{remark}[TTE and bare geometry are generally orthogonal]\label{rem:TTE-geom-orthogonal}
	There is no canonical global comparison between the bare TTE modality and the bare geometric modality. TTE constrains implementability by computable realizers, while a geometric modality constrains the mathematical form of the encoding. A geometric transport may use fixed noncomputable geometric data, such as a noncomputable unitary or stabilizing operator, and therefore need not be TTE. Conversely, a TTE transport may be computably realized but geometrically artificial, for example by computably manufacturing an infinite object whose coordinates encode a predicate rather than by applying one of the allowed geometric operations.
	The comparable refinement is $\geom^{\TTE}$, which requires both geometric form and TTE-computable realization. Hence
	\[
	\geom^{\TTE}\preceq\TTE \qquad\text{and}\qquad \geom^{\TTE}\preceq\geom.
	\]
\end{remark}

\begin{remark}[Recommended modality diagram]\label{rem:recommended-modality-diagram}
	The useful order is a partial order, not a single linear chain. The most robust chains are
	\[
	\RepTTE \preceq \GeomTTE \preceq \TTE \preceq \cont \preceq \BorcD \preceq \raw,
	\]
	\[
	\RepCont \preceq \GeomCont \preceq \cont \preceq \raw,
	\]
	and
	\[
	\rep \preceq \geom \preceq \raw .
	\]
	The decoder-only Borel modality satisfies
	\[
	\raw\preceq\BorD,
	\]
	but \(\BorD\) is not automatically sound for raw type-\(G\) SCI lower-bound transfer, because its decoder need not commute with limits. The Borel modality most compatible
	with ordinary type-\(G\) limit pullbacks is $\BorcD$, where the decoder remains continuous.
\end{remark}

\begin{remark}[Computational strength of TTE reductions]\label{rem:TTE-strength}
	A TTE finite-query transport is stronger as a theorem but stricter as a preorder. More precisely,
	\[
	\mathbf S\lemfq{\TTE} \mathbf P \quad\Longrightarrow\quad
	\mathbf S\lemfq{\cont} \mathbf P \quad\Longrightarrow\quad \mathbf S\lemfq{\raw} \mathbf P,
	\]
	but the converses generally fail. Thus proving a TTE transport gives more information than proving a raw transport: it says that the finite-query simulation can be implemented
	uniformly on names. The price is that fewer transports exist.
\end{remark}

We now return to the CH23 diagonal sources. The next section shows that even the continuous and TTE finite-query modalities still identify them. Hence the missing structure is not mere topological or computable regularity, but geometric admissibility.

\section{Regularity Alone Does Not Recover The CH23 Distinction}\label{sec:regularity-collapse}
The raw collapse might seem to depend on the unrestricted nature of the raw encoding. This section shows that the same collapse persists under natural continuous and TTE
finite-query interpretations of the diagonal evaluation interface.

For the TTE statement, we must specify the represented spaces. We use the probably most direct choice: names are effective tables of the diagonal coefficients and singleton-window approximants.

\begin{definition}[Evaluation-name representation for the diagonal singleton problems]\label{def:evaluation-name-representation-diag}
	Assume that the diagonal singleton-window interface
	\[
	\Lambda_D = \{\mu_j:j\in\mathbb N\}\cup\{\rho_n:n\in\mathbb N\}
	\]
	is effectively indexed, where
	\[
	\mu_j(A,K)=a_j \quad\text{if}\quad Ae_j=a_j e_j,
	\]
	and
	\[
	\rho_n(A,K)=r_n(K)
	\]
	is the \(n\)-th rational approximant to the singleton compact input
	\[
	K=\{z\}\subset J.
	\]
	The evaluation-name representation of the diagonal singleton instance space is the represented-space structure in which a name of \((A,K)\) uniformly computes names of all
	evaluation values
	\[
	\mu_j(A,K),\qquad \rho_n(A,K),
	\]
	where $j,n\in\mathbb N$. Equivalently, a name is an oracle for the effective evaluation table of the instance.
	
	In this subsection, the TTE statements are made with respect to these evaluation-name representations and the standard representations of $\mathbb C,\, \mathbb R,\, \mathbb N,\, \{0,1\}$. We also assume that $\varepsilon>0$	and the fixed point $x_0\in J$ used in the collapse construction are computable, and that the fixed approximant sequence for
	\[
	L:=\{x_0\}
	\]
	is computable uniformly in \(n\).
\end{definition}

The following criterion converts uniform finite-transcript formulas for target evaluations into continuity and TTE-computability of the whole encoded instance.

\begin{lemma}[Effective finite-transcript criterion for evaluation-name encodings]\label{lem:effective-finite-transcript-criterion}
	Let \(\mathbf S\) and \(\mathbf P\) be effectively indexed typed SCI computational problems equipped with evaluation-name representations. Suppose that an encoding
	\[
	E:\Omega_S\to\Omega_P
	\]
	has the following property: uniformly in an index for a target evaluation
	\[
	\lambda\in\Lambda_P,
	\]
	one can compute a finite list of source evaluations
	\[
	\gamma_{\lambda,1},\ldots,\gamma_{\lambda,m_\lambda}\in\Lambda_S
	\]
	and a computable partial map
	\[
	\widehat\vartheta_\lambda: \subseteq V_{\gamma_{\lambda,1}}\times\cdots\times V_{\gamma_{\lambda,m_\lambda}} \to V_\lambda
	\]
	whose domain contains
	\[
	\operatorname{im}(\gamma_{\lambda,1},\ldots,\gamma_{\lambda,m_\lambda})
	\]
	and such that
	\[
	\lambda(E(x)) = \widehat\vartheta_\lambda \bigl( \gamma_{\lambda,1}(x),\ldots,\gamma_{\lambda,m_\lambda}(x) \bigr)
	\]
	for every \(x\in\Omega_S\). Then \(E\) has a computable realizer between the evaluation-name representations.
	
	Moreover, if all reconstruction maps above are continuous, then \(E\) is continuous for the initial evaluation topologies.
\end{lemma}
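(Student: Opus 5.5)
The plan is to build the realizer of \(E\) directly from the uniform trace data, exploiting that an evaluation-name of an instance is just an oracle for its evaluation table. Let \(p\) be a name of \(x\in\Omega_S\). Then \(p\) gives, uniformly in any source index \(i\in I_S\), a name of \(\gamma_i(x)\in V_{\gamma_i}\). To produce a name \(q=\Phi_E(p)\) of \(E(x)\), it is enough to produce, uniformly in each target index \(e\) for \(\lambda=\lambda_e\in\Lambda_P\), a name of \(\lambda(E(x))\). This is because a name of \(E(x)\) in the evaluation-name representation of \(\Omega_P\) is exactly such a uniformly computable table, with rows interleaved by a standard effective pairing.

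On input \(e\), the machine first runs the given procedure to obtain \(m_\lambda\), the source indices \(\gamma_{\lambda,1},\ldots,\gamma_{\lambda,m_\lambda}\), and an index for \(\widehat\vartheta_\lambda\). It then queries \(p\) for names of \(\gamma_{\lambda,k}(x)\) for \(k=1,\ldots,m_\lambda\), and combines them into a name of the tuple in the product represented space. Next it runs the machine for \(\widehat\vartheta_\lambda\) on this tuple. The tuple lies in \(\operatorname{im}(\gamma_{\lambda,1},\ldots,\gamma_{\lambda,m_\lambda})\subseteq\operatorname{dom}\widehat\vartheta_\lambda\), so the computation yields a name of \(\widehat\vartheta_\lambda(\vec\gamma_\lambda(x))=\lambda(E(x))\). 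When \(m_\lambda=0\), it simply outputs the computable constant. Dovetailing over all \(e\) gives a single computable \(\Phi_E\).

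The step that needs care is well-definedness. Every name \(q\) built this way must name an element of \(\Omega_P\), namely \(E(x)\), and not merely a consistent table. I would handle this by noting that \(q\) is by construction an evaluation table of the actual instance \(E(x)\); since the representation is defined as ``names are oracles for the evaluation table of the instance,'' \(q\) is a valid name of \(E(x)\). I will make explicit that distinct instances with identical tables cause no problem, because the representation map is the table-to-instance relation restricted to actual tables. Independence of the particular name \(p\) chosen holds because the output depends only on the values \(\gamma_{\lambda,k}(x)\).

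For the continuity claim, I would use the fact that the initial evaluation topology on \(\Omega_P\) is the coarsest topology making every \(\lambda\in\Lambda_P\) continuous. So it suffices to check that \(\lambda\circ E=\vartheta_\lambda\circ\vec\gamma_\lambda\) is continuous for each \(\lambda\). Here \(\vec\gamma_\lambda:\Omega_S\to\operatorname{im}(\vec\gamma_\lambda)\) is continuous, since each component is continuous for the initial topology on \(\Omega_S\) and the image carries the subspace product topology. Composing with the continuous \(\vartheta_\lambda\) gives the result. This part is routine; the main obstacle is only the bookkeeping in the previous paragraph about names.
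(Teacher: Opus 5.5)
Your proposal is correct and matches the paper's proof essentially step for step. The realizer computes, uniformly in the target index, the source transcript and an index for $\widehat\vartheta_\lambda$, reads the source values off the name $p$, and applies the realizer of $\widehat\vartheta_\lambda$; continuity is checked coordinatewise via $\lambda\circ E=\widehat\vartheta_\lambda\circ\vec\gamma_\lambda$ and the defining property of the initial evaluation topology. Your extra bookkeeping about well-definedness goes beyond what the paper does and is harmless, with one small correction: the output \emph{name} $\Phi_E(p)$ may depend on $p$, and what is independent of $p$ is only the point $E(x)$ it names, which is all a realizer requires.
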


\begin{proof}
	We first prove computability. Let \(p\) be an evaluation-name of \(x\in\Omega_S\). To compute an evaluation-name of \(E(x)\), a TTE machine receives an index for a target
	evaluation \(\lambda\in\Lambda_P\). By hypothesis, uniformly from this index it computes indices for
	\[
	\gamma_{\lambda,1},\ldots,\gamma_{\lambda,m_\lambda}
	\]
	and an index for a computable partial realizer of $\widehat\vartheta_\lambda$.Using the source name \(p\), it computes names of the source evaluation values
	\[
	\gamma_{\lambda,1}(x),\ldots,\gamma_{\lambda,m_\lambda}(x).
	\]
	It then applies the computable realizer for \(\widehat\vartheta_\lambda\). The result is a name of $\lambda(E(x))$.	This procedure is uniform in \(\lambda\), hence computes an evaluation-name of \(E(x)\). Thus \(E\) has a computable realizer.
	
	For continuity, recall that the target instance space carries the initial topology with respect to the evaluations in \(\Lambda_P\). Hence \(E\) is continuous if and only if
	\[
	\lambda\circ E:\Omega_S\to V_\lambda
	\]
	is continuous for every \(\lambda\in\Lambda_P\). But
	\[
	\lambda(E(x)) = \widehat\vartheta_\lambda \bigl( \gamma_{\lambda,1}(x),\ldots,\gamma_{\lambda,m_\lambda}(x) \bigr),
	\]
	where each \(\gamma_{\lambda,i}\) is continuous by definition of the source initial topology and \(\widehat\vartheta_\lambda\) is continuous by assumption. Therefore \(\lambda\circ E\) is continuous for every \(\lambda\), and hence \(E\) is continuous.
\end{proof}

This criterion is exactly what the soft collapse encodings satisfy: each target diagonal coefficient is a continuous, computable function of one source diagonal coefficient and
one compact-window approximant.

For the moment we forget effectivity and retain only the initial evaluation topologies.

\begin{theorem}[Continuous collapse of the diagonal singleton sources]\label{thm:continuous-collapse-diagonal-sources}
	With respect to the initial evaluation topologies on the diagonal singleton-window instance spaces,
	\[
	\mathcal P^\sigma_{J,\diag} \equiv_{\cont,\mathrm{fq}} \mathcal P^{\sigma_\varepsilon}_{J,\varepsilon,\diag}.
	\]
\end{theorem}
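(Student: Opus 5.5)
The plan is to reuse verbatim the two encodings $E_{\sigma\to\varepsilon}$ and $E_{\varepsilon\to\sigma}$ from the proof of \cref{thm:raw-collapse}, together with the identity decoder $D=\operatorname{id}_{\{0,1\}}$. We then upgrade each ingredient to the continuous modality of \cref{def:regularity-controlled-modalities}(i). The output identities $D(\Xi(E(x)))=\Psi(x)$ were already verified in Steps~1 and~2 of that proof and do not depend on any regularity. It therefore only remains to check three continuity requirements: for the decoder, for the reconstructions, and for the encodings.

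First, the decoder $\operatorname{id}_{\{0,1\}}$ is trivially continuous for the discrete metric. Second, for the transcript reconstructions, every target evaluation lies in the compressed diagonal interface $\Lambda_{\diag}$. A target diagonal coefficient at coordinate $r$ with $\nu(r)=(p,j)$ is reconstructed from the transcript $(\mu_j,\rho_{p+2})$. In Step~1 the reconstruction map is $(u,v)\mapsto x_0+\varepsilon+|u-v|+1/p$, and in Step~2 it is $(u,v)\mapsto x_0+\max\{0,|u-v|-\varepsilon\}+1/p$. Both are compositions of the modulus, subtraction, $\max$ and addition of constants, so they are continuous on $\mathbb C^2$ and hence on the transcript image with its subspace topology. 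The compact-window evaluations $\rho_n$ of the target singleton $L=\{x_0\}$ are the fixed constants $r_n(L)$. These are zero-query reconstructions from the one-point space, which are continuous. If the interface also contains the off-diagonal entries $\mu_{ij}$, they are the constant $0$ and are handled the same way.

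Third, for the encodings, I will invoke the continuity clause of \cref{lem:effective-finite-transcript-criterion}. The instance spaces carry the initial topology generated by $\Lambda_{\diag}$, so $E$ is continuous as soon as every composite $\lambda\circ E$ is continuous. Each such composite equals $\vartheta_\lambda\circ(\gamma_{\lambda,1},\ldots,\gamma_{\lambda,m_\lambda})$, that is, a continuous reconstruction composed with source evaluations, which are continuous by definition of the initial topology. Hence $E_{\sigma\to\varepsilon}$ and $E_{\varepsilon\to\sigma}$ both lie in $C(\Omega_S,\Omega_P)$. With all three requirements met, the two $\le_{\raw,\fq}$ reductions are in fact $\le_{\cont,\fq}$ reductions, which gives the stated equivalence.

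The main obstacle is a bookkeeping point in the encoding step, namely that the initial evaluation topology must be well defined on the target space. This requires that a target instance $(B,L)$ be determined by its evaluation table, so that the initial topology is the one induced by the evaluation map into a product of copies of $\mathbb C$. It also requires that the fixed relabeling $\nu$ be built into how the target evaluations $\mu_r$ are read off. I would handle this by noting explicitly that $\mu_r(B,L)=b_{\nu(r)}$ for the fixed bijection $\nu$. Then each target coordinate depends on exactly one source coordinate pair, and no uniform or joint continuity across infinitely many coordinates is needed.
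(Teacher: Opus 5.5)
Your proposal is correct and follows the paper's proof essentially step for step: you reuse the two raw-collapse encodings with the identity decoder, observe that each target coordinate is either a continuous function of the transcript $(\mu_j,\rho_{p+2})$ or a constant, and deduce continuity of the encodings from the initial-topology clause of \cref{lem:effective-finite-transcript-criterion}. The bookkeeping worry in your last paragraph is unnecessary, because the universal property of an initial topology ($E$ is continuous iff every $\lambda\circ E$ is) holds whether or not instances are determined by their evaluation tables.
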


\begin{proof}
	We use the two encodings from \cref{thm:raw-collapse}. We verify that they are continuous and that their finite transcript reconstructions are continuous.
	
	First consider the encoding
	\[
	E_{\sigma\to\varepsilon}(A,K)=(B(A,K),L), \qquad L=\{x_0\},
	\]
	where, writing
	\[
	K=\{z\}, \qquad Ae_j=a_j e_j,
	\]
	the diagonal entries of \(B(A,K)\) are indexed by pairs
	\[
	(p,j)\in\mathbb N \times\mathbb N_0
	\]
	and given by
	\[
	b_{p,j} = x_0+\varepsilon+|a_j-r_{p+2}(K)|+\frac1p .
	\]
	Fix once and for all a bijection
	\[
	\nu:\mathbb N\to\mathbb N \times\mathbb N_0, \qquad r\mapsto(p(r),j(r)).
	\]
	Thus \(B(A,K)\) is regarded as a diagonal operator on \(\ell^2(\mathbb N)\) after this fixed relabeling.
	
	Let \(\lambda\) be a target evaluation for the pseudospectral diagonal problem. If \(\lambda\) is a diagonal coefficient query at coordinate \(r\), then
	\[
	\nu(r)=(p,j)
	\]
	and
	\[
	\lambda(E_{\sigma\to\varepsilon}(A,K)) = b_{p,j} = x_0+\varepsilon+|\mu_j(A,K)-\rho_{p+2}(A,K)|+\frac1p .
	\]
	This is the composition of the finite source transcript
	\[
	(\mu_j,\rho_{p+2})
	\]
	with the continuous map
	\[
	(u,v)\longmapsto x_0+\varepsilon+|u-v|+\frac1p .
	\]
	If \(\lambda\) is an off-diagonal matrix-entry query, then
	\[
	\lambda(E_{\sigma\to\varepsilon}(A,K))=0,
	\]
	which is a constant continuous reconstruction. If \(\lambda\) is a compact-window approximant query, then
	\[
	\lambda(E_{\sigma\to\varepsilon}(A,K))
	\]
	is the fixed approximant to \(L=\{x_0\}\), again a constant continuous reconstruction. Thus every target evaluation in the compressed diagonal interface after \(E_{\sigma\to\varepsilon}\) is a continuous finite-transcript function of source evaluations.

	By \cref{lem:effective-finite-transcript-criterion}, the encoding \(E_{\sigma\to\varepsilon}\) is continuous. The decoder is the identity map on \(\{0,1\}\), hence continuous. The output identity was proved in \cref{thm:raw-collapse}. Therefore
	\[
	\mathcal P^\sigma_{J,\diag} \le_{\cont,\mathrm{fq}} \mathcal P^{\sigma_\varepsilon}_{J,\varepsilon,\diag}.
	\]
	
	For the reverse direction, use the encoding
	\[
	E_{\varepsilon\to\sigma}(A,K)=(C(A,K),L),
	\]
	where
	\[
	C(A,K)e_{p,j}=c_{p,j}e_{p,j},
	\]
	and
	\[
	c_{p,j} = x_0+\max\{0,|a_j-r_{p+2}(K)|-\varepsilon\}+\frac1p .
	\]
	A target diagonal coefficient at coordinate \(r\), with \(\nu(r)=(p,j)\), is
	\[
	c_{p,j} = x_0+ \max\{0,|\mu_j(A,K)-\rho_{p+2}(A,K)|-\varepsilon\} +\frac1p ,
	\]
	which is obtained from the finite transcript
	\[
	(\mu_j,\rho_{p+2})
	\]
	by the continuous map
	\[
	(u,v)\longmapsto x_0+\max\{0,|u-v|-\varepsilon\}+\frac1p .
	\]
	The compact-window approximant queries are constant, as above. Thus every target evaluation in the compressed diagonal interface after \(E_{\varepsilon\to\sigma}\) is a
	continuous finite-transcript function of source evaluations. Therefore \(E_{\varepsilon\to\sigma}\) is continuous. The decoder is again the identity, and the
	output identity was proved in \cref{thm:raw-collapse}. Hence
	\[
	\mathcal P^{\sigma_\varepsilon}_{J,\varepsilon,\diag} \le_{\cont,\mathrm{fq}} \mathcal P^\sigma_{J,\diag}.
	\]
	Combining the two reductions proves the continuous equivalence.
\end{proof}

With effective evaluation names, the same formulas are computable uniformly in the target coordinate.

\begin{theorem}[TTE finite-query collapse of the diagonal singleton sources]\label{thm:TTE-collapse-diagonal-sources}
	Assume the effective evaluation-name representations from \cref{def:evaluation-name-representation-diag}. Then
	\[
	\mathcal P^\sigma_{J,\diag} \equiv_{\TTE,\mathrm{fq}} \mathcal P^{\sigma_\varepsilon}_{J,\varepsilon,\diag}.
	\]
	Consequently, the same two problems are also equivalent in the continuous and raw modalities.
\end{theorem}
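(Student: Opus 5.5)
The plan is to reuse verbatim the two encodings $E_{\sigma\to\varepsilon}$ and $E_{\varepsilon\to\sigma}$ from \cref{thm:raw-collapse}, with identity decoder on $\{0,1\}$, and verify the three clauses (a)--(c) of the TTE modality in \cref{def:regularity-controlled-modalities}. The output identities $D(\Xi(E(x)))=\Psi(x)$ were already established in \cref{thm:raw-collapse}, so only the effectivity clauses need proof. The decoder $\operatorname{id}_{\{0,1\}}$ is trivially computable for the discrete representation, so (b) is immediate.

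The core step is (c), the uniform finite trace. I will fix computable $\nu:\mathbb N\to\mathbb N\times\mathbb N_0$ (for instance a Cantor pairing shifted appropriately). Given an index of a target evaluation, a machine first decides whether it is a diagonal query $\mu_r$ or a window query $\rho_n$; this is possible because the interface is effectively indexed.

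For a diagonal query $\mu_r$, the machine computes $(p,j)=\nu(r)$ and outputs $m=2$ and the source indices for $(\mu_j,\rho_{p+2})$. It also outputs an index for the map
\[
(u,v)\mapsto x_0+\varepsilon+|u-v|+\tfrac1p \qquad\text{or}\qquad (u,v)\mapsto x_0+\max\{0,|u-v|-\varepsilon\}+\tfrac1p .
\]
Both maps are total and computable $\mathbb C^2\to\mathbb C$. The reason is that $x_0$ and $\varepsilon$ are computable by the standing assumption of \cref{def:evaluation-name-representation-diag}, while $|\cdot|$, $\max$, addition and the rational $1/p$ are computable operations on Cauchy names, uniformly in $p$.

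For a window query $\rho_n$, the machine outputs $m=0$ and an index for the computable constant $n$-th approximant of $L=\{x_0\}$, which is computable uniformly in $n$ by assumption. If the target interface is read with the off-diagonal entries $\mu_{ij}$ included, those are reconstructed by the constant $0$, or by the same diagonal formula when $i=j$.

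Clause (a), the existence of a computable realizer for $E$, then follows directly from \cref{lem:effective-finite-transcript-criterion} applied to this uniform trace. Because the target space carries the evaluation-name representation, a name of $E(A,K)$ is just the uniformly computed evaluation table. One must also check that the target table actually names an element of $\Omega_{\diag}\times\mathcal K_{\mathrm{sgl}}(J)$. This holds because $B,C$ are maximal diagonal operators and $x_0\in J$, as shown in \cref{thm:raw-collapse}.

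With (a)--(c) in hand, \cref{thm:tte-fq-weihrauch-trace} (direction (ii)$\Rightarrow$(i)) gives $\le_{\TTE,\fq}$ in both directions. The final clause of the statement follows from \cref{thm:basic-modality-order-calibration}(a), namely $\TTE\preceq\cont\preceq\BorcD\preceq\raw$. Its standing hypothesis, that computable realizers are continuous for the evaluation topologies, holds here because the evaluation-name topology is the initial topology of the evaluations.

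The main obstacle I expect is bookkeeping rather than mathematics. The delicate point is the uniformity in the target index: the machine must compute $\nu(r)$, the shifted index $p+2$, and the rational $1/p$ all from the code alone. It must also ensure that the reconstruction maps are given as indices of computable maps and not merely as computable individual maps. This is exactly where the computability of $\varepsilon$ and $x_0$ enters, and I will remark that in the noncomputable case the same argument goes through relative to an oracle for $\varepsilon$.
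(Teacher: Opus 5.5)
Your proposal is correct and follows the paper's proof essentially step for step. It uses the same two encodings from \cref{thm:raw-collapse} with identity decoder, the uniform trace built from the computable bijection $\nu$ and the computable maps $(u,v)\mapsto x_0+\varepsilon+|u-v|+\frac1p$ and $(u,v)\mapsto x_0+\max\{0,|u-v|-\varepsilon\}+\frac1p$, the computable realizer from \cref{lem:effective-finite-transcript-criterion}, and the continuous and raw consequences from $\TTE\preceq\cont\preceq\raw$. The detour through \cref{thm:tte-fq-weihrauch-trace} is harmless but unnecessary, since clauses (a)--(c) together with the output identity are literally the definition of $\le_{\TTE,\fq}$. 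Your explicit checks that the computed table names an element of $\Omega_{\diag}\times\mathcal K_{\mathrm{sgl}}(J)$, and that the standing hypothesis of \cref{thm:basic-modality-order-calibration} holds, are useful points the paper leaves implicit.
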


\begin{proof}
	We prove that the two collapse transports from \cref{thm:raw-collapse} satisfy the TTE finite-query requirements.
	
	For the first encoding
	\[
	E_{\sigma\to\varepsilon}(A,K)=(B(A,K),L),
	\]
	a target diagonal coefficient query at coordinate \(r\), with
	\[
	\nu(r)=(p,j),
	\]
	is computed from the source evaluations
	\[
	\mu_j(A,K)=a_j, \qquad \rho_{p+2}(A,K)=r_{p+2}(K)
	\]
	by
	\[
	(u,v) \longmapsto x_0+\varepsilon+|u-v|+\frac1p .
	\]
	This map is computable on standard representations of complex numbers and reals, since addition, subtraction, complex modulus, and addition of the computable constants $x_0,\, \varepsilon, \, 1/p$ are computable operations. The finite source transcript
	\[
	(\mu_j,\rho_{p+2})
	\]
	is computable uniformly from \(r\), because \(\nu\) is a fixed computable bijection.
	
	The target compact-window approximants for \(L=\{x_0\}\) are computable uniformly in the query index by the assumption on the fixed approximant sequence for \(L\).
	
	Thus the hypotheses of \cref{lem:effective-finite-transcript-criterion} hold effectively and uniformly for \(E_{\sigma\to\varepsilon}\). Hence \(E_{\sigma\to\varepsilon}\) has a
	computable realizer. The decoder is the identity on the discrete represented space \(\{0,1\}\), hence computable. The output identity is the one already proved in \cref{thm:raw-collapse}. Therefore
	\[
	\mathcal P^\sigma_{J,\diag} \le_{\TTE,\mathrm{fq}} \mathcal P^{\sigma_\varepsilon}_{J,\varepsilon,\diag}.
	\]
	
	For the reverse encoding
	\[
	E_{\varepsilon\to\sigma}(A,K)=(C(A,K),L),
	\]
	a target diagonal coefficient query at coordinate \(r\), with
	\[
	\nu(r)=(p,j),
	\]
	is computed from the same finite transcript
	\[
	(\mu_j,\rho_{p+2})
	\]
	by
	\[
	(u,v) \longmapsto x_0+\max\{0,|u-v|-\varepsilon\}+\frac1p .
	\]
	This map is computable because
	\[
	(s,t)\mapsto \max\{s,t\}
	\]
	is computable on represented real numbers, and the remaining operations are computable. Again the transcript is computable uniformly from \(r\), and all compact-input target queries are computable constant reconstructions.
	
	Hence \(E_{\varepsilon\to\sigma}\) has a computable realizer, the decoder is computable, and the output identity is the one proved in \cref{thm:raw-collapse}. Therefore
	\[
	\mathcal P^{\sigma_\varepsilon}_{J,\varepsilon,\diag} \le_{\TTE,\mathrm{fq}} \mathcal P^\sigma_{J,\diag}.
	\]
	The two TTE reductions give the claimed TTE equivalence. Since
	\[
	\TTE\preceq\cont\preceq\raw,
	\]
	the corresponding continuous and raw equivalences also follow, agreeing with \cref{thm:continuous-collapse-diagonal-sources} and \cref{thm:raw-collapse}.
\end{proof}

By \cref{thm:tte-fq-weihrauch-trace}, this yields a strong Weihrauch reduction between the represented target maps. The conclusion of \cref{thm:TTE-collapse-diagonal-sources} is stronger than Weihrauch reducibility, however, because the collapse encodings also provide uniform finite traces for all target evaluations.

\begin{remark}[Transport-side uniformity and the pure \(\mathcal R-\lim\) normal form]\label{rem:transport-side-uniformity}
	The uniformity clause in the TTE finite-query modality is the transport-side analogue of the pure \(\mathcal R-\lim\) uniformity requirement in the implemented SCI hierarchy.
	
	On the tower side, \cite[Rem.~2.20]{Sorg26Foundations} explains that even if all deepest-level approximants are individually computable, this does not give a represented-space computability model unless there is one uniform procedure producing the indexed approximation table. In the Weihrauch comparison, this is encoded by the pure \(\mathcal R-\lim\) normal form of \cite[Def.~A.21]{Sorg26Foundations}, and \cite[Thm.~A.22]{Sorg26Foundations} proves that the corresponding pure \(\mathcal R\)-tower height agrees with the \(\mathcal R\)-Weihrauch rank under the lim-normal-form hypothesis. The failure of deepest-only computability to imply such a pure witness is made explicit in \cite[Prop.~E.3]{Sorg26Foundations}.
	
	On the transport side, \cref{thm:tte-fq-weihrauch-trace} isolates the analogous condition: a TTE finite-query transport is not merely a family of pointwise finite simulations, but a uniform name-level implementation of the whole interface trace.
\end{remark}
	
	The collapse is therefore not merely an artifact of discontinuous transcript maps. The soft accumulation encodings use continuous, indeed computable, coordinate operations.
	What fails is not regularity in the topological or TTE sense; what fails is geometric naturality. The encodings manufacture a new infinite diagonal operator whose spectral
	accumulation pattern stores the source predicate. This explains why the geometric modality separates the two sources even though the raw, continuous, and TTE modalities
	identify them.

We now develop the modal exact-degree machinery needed to state precisely what geometric naturality recovers.

\section{Modal Exact-Degree Machinery}\label{sec:modal-degree-machinery}
Having introduced modalities, we can repeat the exact-basis construction at the modal level. This section is purely order-theoretic, except for the raw-soundness statements connecting modal witnesses back to raw type-\(G\) exactness.

\subsection{Modal Exact Degrees And Source-Separated Theorem Blocks}
The exact layer is still defined by raw type-\(G\) height. The modality changes only the transport degree relation inside that layer.

\begin{definition}[Exact layer and modal exact degrees]\label{def:modal-degrees}
	Let \(\mathcal U\) be a family of (typed) SCI computational problems, let \(k\in\bN\), and let \(\mathfrak m\) be an admissible modality. The raw exact layer is
	\[
	\cO_k(\mathcal U) := \{\mathbf P\in \mathcal U : \SCIG(\mathbf P)=k\}.
	\]
	For \(\mathbf P, \mathbf Q\in\cO_k(\mathcal U)\), write
	\[
	\mathbf P\equiv_{\mathfrak m,\fq} \mathbf Q
	\]
	if
	\[
	\mathbf P\lemfq{\mathfrak m} \mathbf Q \quad\text{and}\quad \mathbf Q\lemfq{\mathfrak m} \mathbf P.
	\]
	The set of modal exact degrees is
	\[
	D^{\mathfrak m}_k(\mathcal U) := \cO_k(\mathcal U)/{\equiv_{\mathfrak m,\fq}}.
	\]
	We write \([\mathbf P]_{\mathfrak m}\) for the modal exact degree of \(\mathbf P\), and define
	\[
	[\mathbf P]_{\mathfrak m} \preceq [\mathbf Q]_{\mathfrak m} \quad\Longleftrightarrow\quad \mathbf P\lemfq{\mathfrak m} \mathbf Q.
	\]
\end{definition}

\begin{definition}[Minimal modal exact degrees]\label{def:min-modal-degrees}
	A degree \(m\in D^{\mathfrak m}_k(\mathcal U)\) is minimal if there is no \(d\in D^{\mathfrak m}_k(\mathcal U)\) with \(d\prec m\). The set of minimal modal exact degrees is
	\[
	\operatorname{MinDeg}^{\mathfrak m}_k(\mathcal U).
	\]
\end{definition}

As usual, the preorder becomes a partial order after quotienting by mutual reducibility.

\begin{lemma}[Well-definedness of modal exact degrees]\label{lem:modal-welldefined}
	The relation
	\[
	[\mathbf P]_{\mathfrak m} \preceq [\mathbf Q]_{\mathfrak m} \quad:\Longleftrightarrow\quad
	\mathbf P\le_{\mathfrak m,\mathrm{fq}} \mathbf Q
	\]
	on
	\[
	D_k^{\mathfrak m}(\mathcal U) = \mathcal O_k(\mathcal U)/{\equiv_{\mathfrak m,\mathrm{fq}}}
	\]
	is well defined and is a partial order.
\end{lemma}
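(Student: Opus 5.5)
The plan is to use the standard fact that a preorder induces a partial order on the quotient by its associated equivalence. The only input specific to the paper is \cref{prop:modal-preorder}: since \(\mathfrak m\) is admissible, \(\le_{\mathfrak m,\mathrm{fq}}\) is reflexive and transitive on all typed SCI problems, hence in particular on \(\mathcal O_k(\mathcal U)\). It follows at once that \(\equiv_{\mathfrak m,\mathrm{fq}}\) is an equivalence relation on \(\mathcal O_k(\mathcal U)\). Reflexivity and transitivity are inherited from the preorder, and symmetry is built into the two-sided definition. So the quotient \(D^{\mathfrak m}_k(\mathcal U)\) makes sense.

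\textbf{Well-definedness.} Let \(\mathbf P\equiv_{\mathfrak m,\mathrm{fq}}\mathbf P'\) and \(\mathbf Q\equiv_{\mathfrak m,\mathrm{fq}}\mathbf Q'\), and suppose \(\mathbf P\le_{\mathfrak m,\mathrm{fq}}\mathbf Q\). Then the chain
\[
\mathbf P'\le_{\mathfrak m,\mathrm{fq}}\mathbf P\le_{\mathfrak m,\mathrm{fq}}\mathbf Q\le_{\mathfrak m,\mathrm{fq}}\mathbf Q'
\]
gives \(\mathbf P'\le_{\mathfrak m,\mathrm{fq}}\mathbf Q'\) by transitivity, applied twice. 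By symmetry of the roles of the representatives, the truth value of \([\mathbf P]_{\mathfrak m}\preceq[\mathbf Q]_{\mathfrak m}\) does not depend on the chosen representatives.

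\textbf{Partial order axioms.} Reflexivity of \(\preceq\) comes from reflexivity of \(\le_{\mathfrak m,\mathrm{fq}}\), and transitivity of \(\preceq\) from transitivity of \(\le_{\mathfrak m,\mathrm{fq}}\), both applied to representatives. For antisymmetry, suppose \([\mathbf P]_{\mathfrak m}\preceq[\mathbf Q]_{\mathfrak m}\) and \([\mathbf Q]_{\mathfrak m}\preceq[\mathbf P]_{\mathfrak m}\). Then \(\mathbf P\le_{\mathfrak m,\mathrm{fq}}\mathbf Q\) and \(\mathbf Q\le_{\mathfrak m,\mathrm{fq}}\mathbf P\), which is exactly \(\mathbf P\equiv_{\mathfrak m,\mathrm{fq}}\mathbf Q\). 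Hence the two classes coincide.

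There is no real obstacle here. The only point to check is that the admissibility hypothesis is actually in force, so that \cref{prop:modal-preorder} applies. \Cref{def:modal-degrees} assumes \(\mathfrak m\) admissible, so it is. Restricting the preorder to the subset \(\mathcal O_k(\mathcal U)\) preserves reflexivity and transitivity trivially.
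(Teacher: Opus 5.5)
Your proposal is correct and follows essentially the same argument as the paper: well-definedness via the transitivity chain \(\mathbf P'\le_{\mathfrak m,\mathrm{fq}}\mathbf P\le_{\mathfrak m,\mathrm{fq}}\mathbf Q\le_{\mathfrak m,\mathrm{fq}}\mathbf Q'\), reflexivity and transitivity inherited from the preorder, and antisymmetry because mutual reducibility is exactly \(\equiv_{\mathfrak m,\mathrm{fq}}\). Your explicit checks that \(\equiv_{\mathfrak m,\mathrm{fq}}\) is an equivalence relation and that admissibility is in force, so that the preorder proposition applies, are correct details the paper leaves implicit.
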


\begin{proof}
	We first prove well-definedness. Suppose
	\[
	\mathbf P \equiv_{\mathfrak m,\mathrm{fq}} \mathbf P', \qquad
	\mathbf Q\equiv_{\mathfrak m,\mathrm{fq}} \mathbf Q',
	\]
	and suppose
	\[
	\mathbf P\le_{\mathfrak m,\mathrm{fq}} \mathbf Q.
	\]
	Then
	\[
	\mathbf P'\le_{\mathfrak m,\mathrm{fq}} \mathbf P \le_{\mathfrak m,\mathrm{fq}} \mathbf Q \le_{\mathfrak m,\mathrm{fq}} \mathbf Q'
	\]
	by transitivity of the preorder. Hence
	\[
	\mathbf P'\le_{\mathfrak m,\mathrm{fq}} \mathbf Q'.
	\]
	Thus the order relation does not depend on the chosen representatives.
	
	Reflexivity follows because
	\[
	\mathbf P\le_{\mathfrak m,\mathrm{fq}} \mathbf P
	\]
	for every \(\mathbf P\). Transitivity follows directly from transitivity of $\le_{\mathfrak m,\mathrm{fq}}$. For antisymmetry, suppose
	\[
	[\mathbf P]_{\mathfrak m}\preceq [\mathbf Q]_{\mathfrak m} \qquad\text{and}\qquad
	[\mathbf Q]_{\mathfrak m}\preceq [\mathbf P]_{\mathfrak m}.
	\]
	Then
	\[
	\mathbf P \le_{\mathfrak m,\mathrm{fq}} \mathbf Q \qquad\text{and}\qquad \mathbf Q\le_{\mathfrak m,\mathrm{fq}} \mathbf P,
	\]
	so
	\[
	\mathbf P\equiv_{\mathfrak m,\mathrm{fq}} \mathbf Q.
	\]
	Therefore
	\[
	[\mathbf P]_{\mathfrak m}= [\mathbf Q]_{\mathfrak m}.
	\]
	Hence the quotient relation is a partial order.
\end{proof}

The finite theorem-block criterion packages the situation needed in the CH23 geometric proof: finitely many blocks, one source per block, within-block reductions, and cross-block non-reductions.

\begin{definition}[Modal source-separated finite theorem block]\label{def:modal-source-separated}
	Let \(\mathcal U\) be a finite family of (typed) SCI computational problems and 
	\[
	\mathcal U= \mathcal U_1\sqcup\cdots\sqcup \mathcal U_r
	\]
	be a partition, and choose sources
	\[
	\mathbf S_i \in \mathcal U_i,
	\]
	for $i=1,\ldots,r$.	This data is a \(\mathfrak m\)-source-separated finite theorem block at height \(k\) if
	\begin{enumerate}[label=\textup{(B\arabic*)}]
		\item every \(\mathbf P\in \mathcal U\) has \(\SCIG(\mathbf P)=k\);
		\item for every \(i\) and every \(\mathbf P\in \mathcal U_i\),
		\[
		\mathbf S_i \lemfq{\mathfrak m} \mathbf P;
		\]
		\item for \(i\ne j\) and every \(\mathbf P\in \mathcal U_j\),
		\[
		\mathbf S_i \nleq_{\mathfrak m,\fq} \mathbf P.
		\]
	\end{enumerate}
\end{definition}

The next theorem is the finite assembly step. Once the book of reductions and non-reductions is known, the minimal exact degrees follow formally.

\begin{theorem}[Finite source-separated theorem-block completion]\label{thm:finite-block}
	If \(\mathcal U= \mathcal U_1\sqcup\cdots\sqcup \mathcal U_r\) carries a \(\mathfrak m\)-source-separated finite theorem-block structure at height \(k\), then
	\[
	\operatorname{MinDeg}^{\mathfrak m}_k(\mathcal U) = \{[\mathbf S_1]_{\mathfrak m},\ldots,[\mathbf S_r]_{\mathfrak m}\}.
	\]
	Moreover, for each \(\mathbf P\in \mathcal U_i\), the unique minimal degree below \([\mathbf P]_{\mathfrak m}\) is \([\mathbf S_i]_{\mathfrak m}\).
\end{theorem}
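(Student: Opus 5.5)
The argument is purely order-theoretic and uses only \cref{lem:modal-welldefined} together with (B1)--(B3). By (B1), $\cO_k(\mathcal U)=\mathcal U$, so every member of $\mathcal U$ has a degree in $D^{\mathfrak m}_k(\mathcal U)$, and $\preceq$ is a partial order on these degrees. We shall prove three things: every degree $[\mathbf P]_{\mathfrak m}$ with $\mathbf P\in\mathcal U_i$ lies above $[\mathbf S_i]_{\mathfrak m}$; each $[\mathbf S_i]_{\mathfrak m}$ is minimal; and the minimal degrees are exactly these. The first is immediate from (B2).

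For minimality of $[\mathbf S_i]_{\mathfrak m}$, suppose $d\prec[\mathbf S_i]_{\mathfrak m}$ with $d=[\mathbf Q]_{\mathfrak m}$ and $\mathbf Q\in\mathcal U_j$. By (B2), $\mathbf S_j\lemfq{\mathfrak m}\mathbf Q\lemfq{\mathfrak m}\mathbf S_i$, so transitivity (\cref{prop:modal-preorder}) gives $\mathbf S_j\lemfq{\mathfrak m}\mathbf S_i$. If $j\neq i$, this contradicts (B3), since $\mathbf S_i\in\mathcal U_i$. Hence $j=i$. Then (B2) gives $\mathbf S_i\lemfq{\mathfrak m}\mathbf Q$, so $[\mathbf Q]_{\mathfrak m}=[\mathbf S_i]_{\mathfrak m}$, which contradicts strictness.

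Conversely, let $m=[\mathbf P]_{\mathfrak m}$ be minimal with $\mathbf P\in\mathcal U_i$. Since $[\mathbf S_i]_{\mathfrak m}\preceq m$, minimality forces equality. This gives the displayed set equality.

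For the ``moreover'' clause, let $\mathbf P\in\mathcal U_i$ and let $[\mathbf S_j]_{\mathfrak m}\preceq[\mathbf P]_{\mathfrak m}$ be any minimal degree below it. Then $\mathbf S_j\lemfq{\mathfrak m}\mathbf P$. By (B3) this is impossible unless $j=i$, so uniqueness holds, and existence comes from (B2).

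The only delicate point is that degree equality $[\mathbf S_i]_{\mathfrak m}=[\mathbf S_j]_{\mathfrak m}$ with $i\ne j$ must be excluded, so that the listed degrees are the correct set, and the uniqueness argument must not be circular. Both follow from (B3): such an equality would give $\mathbf S_j\lemfq{\mathfrak m}\mathbf S_i$ with $\mathbf S_i\in\mathcal U_i$, so (B3) applies and the listed degrees are pairwise distinct. No analytic input is needed.
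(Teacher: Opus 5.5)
Your proof is correct and follows essentially the same route as the paper's. In both, (B2) puts every degree of $\mathcal U_i$ above $[\mathbf S_i]_{\mathfrak m}$, and the composite $\mathbf S_j\lemfq{\mathfrak m}\mathbf Q\lemfq{\mathfrak m}\mathbf S_i$ combined with (B3) gives minimality of the source degrees, while (B3) also yields pairwise distinctness and the uniqueness clause. You additionally spell out why every minimal degree coincides with the source degree below it, which the paper leaves implicit after noting that every exact degree lies above some source.
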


\begin{proof}
	For each \(i\), condition \textup{(B2)} gives
	\[
	\mathbf S_i\le_{\mathfrak m,\mathrm{fq}} \mathbf P
	\]
	for $\mathbf P\in \mathcal U_i$. Hence
	\[
	[\mathbf S_i]_{\mathfrak m}\preceq [\mathbf P]_{\mathfrak m}.
	\]
	
	We first show that the source degrees are pairwise distinct. Suppose $i\neq j$.	If
	\[
	[\mathbf S_i]_{\mathfrak m}= [\mathbf S_j]_{\mathfrak m},
	\]
	then in particular
	\[
	\mathbf S_i \le_{\mathfrak m,\mathrm{fq}} \mathbf S_j.
	\]
	But \(\mathbf S_j\in \mathcal U_j\), so this contradicts condition \textup{(B3)} for the source \(\mathbf S_i\) and the block \(\mathcal U_j\). Thus
	\[
	[S_i]_{\mathfrak m}\neq [S_j]_{\mathfrak m} \quad \bigl( \forall i\neq j \bigr).
	\]
	
	Next let
	\[
	[\mathbf Q]_{\mathfrak m} \in D_k^{\mathfrak m}(\mathcal U).
	\]
	Then \(\mathbf Q\in \mathcal U_i\) for a unique \(i\). By \textup{(B2)},
	\[
	[\mathbf S_i]_{\mathfrak m} \preceq [\mathbf Q]_{\mathfrak m}.
	\]
	Thus every exact degree lies above one of the source degrees.
	
	We now prove that each \([\mathbf S_i]_{\mathfrak m}\) is minimal. Suppose
	\[
	[\mathbf Q]_{\mathfrak m} \preceq [\mathbf S_i]_{\mathfrak m}.
	\]
	Let \(\mathbf Q \in \mathcal U_j\). If \(i\neq j\), then \textup{(B2)} gives
	\[
	\mathbf S_j \le_{\mathfrak m,\mathrm{fq}} \mathbf Q.
	\]
	Together with
	\[
	\mathbf Q \le_{\mathfrak m,\mathrm{fq}} \mathbf S_i
	\]
	this implies
	\[
	\mathbf S_j \le_{\mathfrak m,\mathrm{fq}} \mathbf S_i,
	\]
	contradicting \textup{(B3)} for the source \(\mathbf S_j\) and the target block \(\mathcal U_i\), since \(\mathbf S_i \in \mathcal U_i\). Therefore \(j=i\). But then \textup{(B2)} gives
	\[
	\mathbf S_i \le_{\mathfrak m,\mathrm{fq}} \mathbf Q.
	\]
	Since also
	\[
	\mathbf Q \le_{\mathfrak m,\mathrm{fq}} \mathbf S_i,
	\]
	we have
	\[
	[\mathbf Q]_{\mathfrak m}= [\mathbf S_i]_{\mathfrak m}.
	\]
	So \([\mathbf S_i]_{\mathfrak m}\) is minimal.
	
	Finally, if \(\mathbf P\in \mathcal U_i\), then \([\mathbf S_i]_{\mathfrak m} \preceq [\mathbf P]_{\mathfrak m}\) by \textup{(B2)}. If another minimal source degree \([\mathbf S_j]_{\mathfrak m}\) lay below \([\mathbf P]_{\mathfrak m}\) with \(j\neq i\), then
	\[
	\mathbf S_j \le_{\mathfrak m,\mathrm{fq}} \mathbf P
	\]
	would contradict \textup{(B3)}. Hence the unique minimal degree below \([\mathbf P]_{\mathfrak m}\) is \([\mathbf S_i]_{\mathfrak m}\).
\end{proof}

The principal case is the degenerate one-block version. It is used for the raw six-problem collapse.

\begin{corollary}[Principal collapse criterion]\label{cor:principal-collapse}
	Let \(\mathcal U\) be finite, let every member of \(\mathcal U\) have exact raw height \(k\), and suppose that there is \(\mathbf S\in \mathcal U\) such that
	\[
	\mathbf S \lemfq{\mathfrak m} \mathbf P
	\]
	for $\mathbf P\in \mathcal U$. Then
	\[
	\operatorname{MinDeg}^{\mathfrak m}_k(\mathcal U)=\{[\mathbf S]_{\mathfrak m}\}.
	\]
\end{corollary}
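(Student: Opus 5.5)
The plan is to obtain the corollary as the one-block case of \cref{thm:finite-block}. Take the trivial partition with \(r=1\), \(\mathcal U_1:=\mathcal U\), and source \(\mathbf S_1:=\mathbf S\). Condition \textup{(B1)} is exactly the hypothesis that every member of \(\mathcal U\) has \(\SCIG(\mathbf P)=k\). Condition \textup{(B2)} is the hypothesis \(\mathbf S\lemfq{\mathfrak m}\mathbf P\) for all \(\mathbf P\in\mathcal U\). Condition \textup{(B3)} ranges over pairs \(i\neq j\), so with a single block it holds vacuously. \Cref{thm:finite-block} then gives \(\operatorname{MinDeg}^{\mathfrak m}_k(\mathcal U)=\{[\mathbf S]_{\mathfrak m}\}\).

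Since the corollary is meant to be usable on its own, I would also record the short direct argument, which uses only \cref{lem:modal-welldefined}. First, \(\mathbf S\in\mathcal O_k(\mathcal U)\) by the height hypothesis, so \([\mathbf S]_{\mathfrak m}\) is a genuine element of \(D^{\mathfrak m}_k(\mathcal U)\). To see that \([\mathbf S]_{\mathfrak m}\) is minimal, suppose \([\mathbf Q]_{\mathfrak m}\preceq[\mathbf S]_{\mathfrak m}\). The hypothesis gives \(\mathbf S\lemfq{\mathfrak m}\mathbf Q\), so antisymmetry of the partial order yields \([\mathbf Q]_{\mathfrak m}=[\mathbf S]_{\mathfrak m}\). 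Hence no degree lies strictly below \([\mathbf S]_{\mathfrak m}\).

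For uniqueness, let \([\mathbf Q]_{\mathfrak m}\) be any minimal degree. By hypothesis \([\mathbf S]_{\mathfrak m}\preceq[\mathbf Q]_{\mathfrak m}\), and minimality of \([\mathbf Q]_{\mathfrak m}\) forces \([\mathbf S]_{\mathfrak m}=[\mathbf Q]_{\mathfrak m}\).

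There is no real obstacle. The only points to check are that the vacuous reading of \textup{(B3)} is legitimate and that \(\mathcal U\) is nonempty, which holds because it contains \(\mathbf S\). Finiteness of \(\mathcal U\) is not actually needed for this argument; it is kept only so that the statement matches the framework of \cref{def:modal-source-separated}.
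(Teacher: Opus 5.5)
Your proposal is correct and matches the paper. The paper proves the corollary by exactly your direct argument: every degree lies above \([\mathbf S]_{\mathfrak m}\), so any minimal degree must equal it. Your one-block specialization of \cref{thm:finite-block}, with \textup{(B3)} holding vacuously, is the framing the paper itself announces just before the statement. Your antisymmetry step, which shows that \([\mathbf S]_{\mathfrak m}\) is itself minimal, makes explicit a point the paper's proof leaves implicit, and it also justifies your remark that finiteness of \(\mathcal U\) is not needed.
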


\begin{proof}
	Since every member of \(\mathcal U\) has exact raw height \(k\), every member of \(\mathcal U\) represents an element of \(D_k^{\mathfrak m}(\mathcal U)\). By assumption,
	\[
	\mathbf S \le_{\mathfrak m,\mathrm{fq}} \mathbf P
	\]
	for $\mathbf P\in \mathcal U$. Hence
	\[
	[\mathbf S]_{\mathfrak m} \preceq [\mathbf P]_{\mathfrak m}
	\]
	for $\mathbf P\in \mathcal U$. Let \([\mathbf Q]_{\mathfrak m}\) be a minimal element of \(D_k^{\mathfrak m}(\mathcal U)\). Then
	\[
	[\mathbf S]_{\mathfrak m} \preceq [\mathbf Q]_{\mathfrak m}.
	\]
	By minimality of \([\mathbf Q]_{\mathfrak m}\), this forces
	\[
	[\mathbf S]_{\mathfrak m}= [\mathbf Q]_{\mathfrak m}.
	\]
	Therefore the only minimal degree is \([\mathbf S]_{\mathfrak m}\).
\end{proof}

The remaining part of this section explains when modal witnesses still prove raw family-pointwise exactness.

\subsection{Raw-Sound Modal Bases And Family-Pointwise Exactness}
Not every modality is sound for raw type-\(G\) height. The relevant modalities for raw exactness are those lying below the raw preorder.

\begin{definition}[Raw-sound transport modality]\label{def:raw-sound-modality}
	An admissible transport modality \(\mathfrak m\) is called raw-sound if
	\[
	\mathfrak m\preceq \raw.
	\]
	Equivalently, every \(\mathfrak m\)-finite-query transport is also a raw type-\(G\)
	finite-query transport.
\end{definition}

The following lemma is the basic soundness mechanism inherited from a finite-query pullback.

\begin{lemma}[Raw finite-query monotonicity of type-\(G\) SCI]\label{lem:raw-fq-monotonicity}
	Let
	\[
	\mathbf S=(\Psi,\Sigma,(\mathcal N,\rho),\Lambda_S), \qquad
	\mathbf P=(\Xi,\Omega,(\mathcal M,d),\Lambda_P)
	\]
	be SCI typed computational problems. If
	\[
	\mathbf S \le_{\raw,\mathrm{fq}} \mathbf P,
	\]
	then
	\[
	\mathrm{SCI}_G(\mathbf S) \le \mathrm{SCI}_G(\mathbf P).
	\]
\end{lemma}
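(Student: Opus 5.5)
The plan is to pull back a tower for \(\mathbf P\) along the transport. If \(\SCIG(\mathbf P)=\infty\) there is nothing to prove, so let \(k:=\SCIG(\mathbf P)<\infty\) and fix a raw type-G tower \((\Gamma_{n_k,\ldots,n_1},\Lambda_{\Gamma_{n_k,\ldots,n_1}})\) of height \(k\) for \(\mathbf P\). Fix also a transport \((E,D,(\gamma_{f,\cdot},\vartheta_f)_{f\in\Lambda_P})\) witnessing \(\mathbf S\le_{\raw,\mathrm{fq}}\mathbf P\) as in \cref{def:raw-fq}. The candidate tower for \(\mathbf S\) is
\[
\widetilde\Gamma_{n_k,\ldots,n_1}:=D\circ\Gamma_{n_k,\ldots,n_1}\circ E,\qquad
\Lambda_{\widetilde\Gamma_{n_k,\ldots,n_1}}(A):=\bigcup_{f\in\Lambda_{\Gamma_{n_k,\ldots,n_1}}(E(A))}\{\gamma_{f,1},\ldots,\gamma_{f,m_f}\}.
\]
The second set is a finite subset of \(\Lambda_S\), since it is a finite union of finite sets.

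The first key step is to verify that each \(\widetilde\Gamma\) is a general algorithm in the sense of \cref{def:general-algorithm}. Suppose \(A,B\in\Sigma\) agree on every source evaluation in \(\Lambda_{\widetilde\Gamma}(A)\). Then for every \(f\in\Lambda_\Gamma(E(A))\) the transcripts \((\gamma_{f,1},\ldots,\gamma_{f,m_f})\) of \(A\) and \(B\) coincide. The reconstruction identity \(f(E(\cdot))=\vartheta_f(\gamma_{f,1}(\cdot),\ldots)\) then gives \(f(E(B))=f(E(A))\). For \(m_f=0\) the transcript is the constant \(\ast\), so the same conclusion holds. Applying (G1) and (G2) for \(\Gamma\) at \(E(A)\) yields \(\Gamma(E(B))=\Gamma(E(A))\) and \(\Lambda_\Gamma(E(B))=\Lambda_\Gamma(E(A))\). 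Composing with \(D\) gives (G1) for \(\widetilde\Gamma\). Since \(\Lambda_{\widetilde\Gamma}(B)\) is defined from \(\Lambda_\Gamma(E(B))=\Lambda_\Gamma(E(A))\) by the same fixed assignment \(f\mapsto(\gamma_{f,i})_i\), it equals \(\Lambda_{\widetilde\Gamma}(A)\), which is (G2). The point to stress here is that the choice of source evaluations \(\gamma_{f,i}\) depends only on \(f\) and not on the input; that is what makes (G2) go through.

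The second step is the limit identity. Continuity of \(D\) lets it commute with each iterated limit. Inductively from the innermost limit outward, if \(\lim_{n_1}\Gamma_{n_k,\ldots,n_1}(E(A))\) exists in \(\mathcal M\), then \(\lim_{n_1}D(\Gamma_{\ldots}(E(A)))\) exists and equals \(D\) of that limit, and so on through all \(k\) levels. Hence
\[
\lim_{n_k}\cdots\lim_{n_1}\widetilde\Gamma_{n_k,\ldots,n_1}(A)=D(\Xi(E(A)))=\Psi(A).
\]
The case \(k=0\) is direct: \(\widetilde\Gamma=D\circ\Xi\circ E=\Psi\). This gives a height-\(k\) tower for \(\mathbf S\), so \(\SCIG(\mathbf S)\le k\).

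I expect the only genuinely delicate point to be (G2) in the first step, specifically the bookkeeping for zero-length transcripts. Using the one-point convention of \cref{def:finite-transcript}, zero-query target evaluations contribute nothing to \(\Lambda_{\widetilde\Gamma}\), and their values on \(E(A)\) and \(E(B)\) agree automatically. The limit step is routine once continuity of \(D\) is used level by level. Everything else follows \cite[Lem.~4.9, Thm.~4.10]{Sorg26Witness}, adapted to the zero-query-constant variant.
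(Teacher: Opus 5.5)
Your proposal is correct and follows the paper's proof essentially step for step. You pull back a height-\(k\) tower as \(D\circ\Gamma_{n_k,\ldots,n_1}\circ E\), take as source query set the union of the reconstruction transcripts of the target queries at \(E(A)\), verify (G1)--(G2) using that the choice \(f\mapsto(\gamma_{f,i})_i\) does not depend on the input, and pass the continuous decoder \(D\) through the iterated limits one level at a time.
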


\begin{proof}
	Let
	\[
	h:=\mathrm{SCI}_G(\mathbf P).
	\]
	If \(h=\infty\), there is nothing to prove. Assume \(h<\infty\). If \(h=0\), let
	\[
	\Gamma:\Omega\to \mathcal M
	\]
	be a general algorithm with
	\[
	\Gamma(B)=\Xi(B)
	\]
	for $B\in\Omega$. If \(h\ge1\), let
	\[
	\Gamma_{n_h,\ldots,n_1}:\Omega\to \mathcal M
	\]
	be a raw type-\(G\) tower of height \(h\) for \(\mathbf P\). Thus
	\[
	\Xi(B) = \lim_{n_h\to\infty}\cdots\lim_{n_1\to\infty} \Gamma_{n_h,\ldots,n_1}(B)
	\]
	for $B\in\Omega$. Now let
	\[
	E:\Sigma\to\Omega, \qquad
	D:(\mathcal M,d) \to (\mathcal N,\rho)
	\]
	and the finite reconstruction packages
	\[
	f(E(A)) = \vartheta_f \bigl( \gamma_{f,1}(A),\ldots,\gamma_{f,m_f}(A) \bigr)
	\]
	for $f\in\Lambda_P$, witness
	\[
	\mathbf S \le_{\raw,\mathrm{fq}} \mathbf P.
	\]
	We write \(\Gamma_{\mathbf n}\) for either \(\Gamma\) in the case \(h=0\), or for \(\Gamma_{n_h,\ldots,n_1}\) in the case \(h\ge1\). Define
	\[
	\widetilde\Gamma_{n_h,\ldots,n_1}(A) := D\bigl(\Gamma_{n_h,\ldots,n_1}(E(A))\bigr),
	\]
	where $A\in\Sigma$.
	
	We first check that each
	\[
	\widetilde\Gamma_{n_h,\ldots,n_1}
	\]
	is a raw general algorithm for \(\mathbf S\). Fix \(A\in\Sigma\). Let
	\[
	F_A := \Lambda_{\Gamma_{n_h,\ldots,n_1}}(E(A)) \subseteq \Lambda_P
	\]
	be the finite target query set used by the target general algorithm at \(E(A)\). Define the source query set
	\[
	\widetilde\Lambda(A) := \bigcup_{f\in F_A} \{\gamma_{f,1},\ldots,\gamma_{f,m_f}\}.
	\]
	This is a finite, possibly empty, subset of \(\Lambda_S\).
	
	Suppose \(A'\in\Sigma\) agrees with \(A\) on all evaluations in
	\[
	\widetilde\Lambda(A).
	\]
	Then, for every \(f\in F_A\),
	\[
	f(E(A')) = \vartheta_f \bigl( \gamma_{f,1}(A'),\ldots,\gamma_{f,m_f}(A') \bigr) = \vartheta_f \bigl( \gamma_{f,1}(A),\ldots,\gamma_{f,m_f}(A) \bigr) = f(E(A)).
	\]
	Since \(\Gamma_{n_h,\ldots,n_1}\) is a general algorithm, this implies
	\[
	\Gamma_{n_h,\ldots,n_1}(E(A')) = \Gamma_{n_h,\ldots,n_1}(E(A)),
	\]
	and therefore
	\[
	\widetilde\Gamma_{n_h,\ldots,n_1}(A') = \widetilde\Gamma_{n_h,\ldots,n_1}(A).
	\]
	It also implies that the target query set at \(E(A')\) equals the target query set at \(E(A)\), hence the source query set \(\widetilde\Lambda(A')\) equals \(\widetilde\Lambda(A)\).  Thus
	\[
	\widetilde\Gamma_{n_h,\ldots,n_1}
	\]
	satisfies the two defining finite-information conditions for a raw general algorithm.
	
	It remains to check convergence. If \(h=0\), then
	\[
	\widetilde\Gamma(A) := D(\Gamma(E(A))) = D(\Xi(E(A))) = \Psi(A),
	\]
	so \(\mathbf S\) has height \(0\).
	
	If \(h\ge1\), then for every \(A\in\Sigma\),
	\[
	\Psi(A) = D(\Xi(E(A))).
	\]
	Since \(D\) is continuous and
	\[
	\Xi(E(A)) = \lim_{n_h\to\infty}\cdots\lim_{n_1\to\infty} \Gamma_{n_h,\ldots,n_1}(E(A)),
	\]
	we may pass \(D\) through the iterated limits, one limit at a time, and obtain
	\[
	\Psi(A) = \lim_{n_h\to\infty}\cdots\lim_{n_1\to\infty} D(\Gamma_{n_h,\ldots,n_1}(E(A))).
	\]
	Thus \(\mathbf S\) has a raw type-\(G\) tower of height \(h\). Therefore
	\[
	\mathrm{SCI}_G(\mathbf S) \le h=\mathrm{SCI}_G(\mathbf P).
	\]
\end{proof}

\begin{lemma}[Raw-height monotonicity for raw-sound modalities]\label{lem:raw-sound-height-monotonicity}
	Let \(\mathfrak m\) be raw-sound. If
	\[
	\mathbf S \le_{\mathfrak m,\mathrm{fq}} \mathbf P,
	\]
	then
	\[
	\mathrm{SCI}_G(\mathbf S) \le \mathrm{SCI}_G(\mathbf P).
	\]
\end{lemma}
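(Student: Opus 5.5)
The proof is a two-step factorization through the raw preorder, so no new tower construction is needed. First, \cref{def:raw-sound-modality} gives \(\mathfrak m\preceq\raw\), and by the definition of the modality order (\cref{def:modality-order}) this means: whenever \(\mathbf S\lemfq{\mathfrak m}\mathbf P\) holds, also \(\mathbf S\lemfq{\raw}\mathbf P\) holds. Concretely, the \(\mathfrak m\)-admissible encoding, decoder and reconstruction maps witnessing \(\mathbf S\lemfq{\mathfrak m}\mathbf P\) then also witness a raw finite-query transport, that is, a reduction in the sense of \cref{def:raw-fq}. Note that \(\raw\) here is the raw modality of \cref{def:raw-modality}, which coincides with \(\leGfq\).

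Second, \cref{lem:raw-fq-monotonicity} applied to this raw transport gives
\[
\mathrm{SCI}_G(\mathbf S)\le\mathrm{SCI}_G(\mathbf P).
\]
The conventions match. That lemma allows \(\mathrm{SCI}_G(\mathbf P)=\infty\) (the case is trivial) and the zero-query convention \(m_f=0\) (the source query set may be empty). So no case distinction beyond those already handled in its proof is needed.

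The only point to check is bookkeeping rather than mathematics. The raw modality's decoder class must consist of continuous maps, since continuity of \(D\) is what lets it pass through the iterated limits in \cref{lem:raw-fq-monotonicity}. Raw-soundness guarantees this automatically, because the transport obtained is literally raw. This is precisely why modalities such as \(\BorD\) or \(\Borfull\), which are not \(\preceq\raw\) by \cref{thm:basic-modality-order-calibration}(d), are excluded. I expect no genuine obstacle; the statement is a direct corollary of composing the definition of raw-soundness with \cref{lem:raw-fq-monotonicity}.
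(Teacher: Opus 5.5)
Your proposal is correct and matches the paper's proof exactly: raw-soundness $\mathfrak m\preceq\raw$ turns $\mathbf S\lemfq{\mathfrak m}\mathbf P$ into $\mathbf S\lemfq{\raw}\mathbf P$, and \cref{lem:raw-fq-monotonicity} then gives the height inequality. One small caveat: \cref{def:modality-order} only guarantees the implication between the relations, not that the same $\mathfrak m$-witnesses are themselves raw-admissible. The relation-level implication is all your argument actually uses, so nothing is lost.
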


\begin{proof}
	Since \(\mathfrak m\preceq\raw\), the transport
	\[
	\mathbf S \le_{\mathfrak m,\mathrm{fq}} \mathbf P
	\]
	implies
	\[
	\mathbf S \le_{\raw,\mathrm{fq}} \mathbf P.
	\]
	The conclusion follows from \cref{lem:raw-fq-monotonicity}.
\end{proof}

We can now define modal exact bases in a way that still yields raw height information.

\begin{definition}[Modal exact basis]\label{def:modal-exact-basis}
	Let \(\mathcal U\) be a family of (typed) SCI computational problems, let \(k\in\mathbb N\), and let \(\mathfrak m\preceq\raw\) be raw-sound. Set
	\[
	\mathcal U_{\le k} := \{\mathbf P \in \mathcal U : \mathrm{SCI}_G(\mathbf P) \le k\}.
	\]
	A modal exact basis for \((\mathcal U,k,\mathfrak m)\) is a set
	\[
	B^{\mathfrak m}_k(\mathcal U)\subseteq\mathcal O_k(\mathcal U)
	\]
	such that
	\[
	\mathcal O_k(\mathcal U) = \{\mathbf P\in \mathcal U_{\le k}: \exists \mathbf B \in B^{\mathfrak m}_k(\mathcal U) \text{ with } \mathbf B \le_{\mathfrak m,\mathrm{fq}} \mathbf P \}.
	\]
\end{definition}

\begin{remark}[Why the truncation \(\mathcal U_{\le k}\) is necessary]\label{rem:truncation}
	Without the truncation \(\mathcal U_{\le k}\), an exact height-\(k\) source could reduce to a higher-height target. The exact-layer equality would then fail for a reason unrelated to exact support. The correct exact-basis problem is therefore a problem inside the \(k\)-bounded truncation of the ambient.
\end{remark}

The theorem below is the modal version of the witness-sharpness upgrade: basis coverage is not merely sufficient for family-pointwise exactness; inside a \(k\)-bounded ambient it is also necessary by definition of the basis.

\begin{theorem}[Modal witness-basis criterion for family-pointwise exactness]\label{thm:modal-witness-basis-family-exactness}
	Let
	\[
	\mathfrak m\preceq\raw
	\]
	be raw-sound and \(\mathcal U\) be a \(k\)-bounded family of (typed) SCI computational problems, i.e.
	\[
	\mathcal U= \mathcal U_{\le k}.
	\]
	Let
	\[
	B^{\mathfrak m}_k(\mathcal U)\subseteq\mathcal O_k(\mathcal U)
	\]
	be a modal exact basis for \((\mathcal U,k,\mathfrak m)\). Then for every family
	\[
	\mathcal F \subseteq \mathcal U,
	\]
	the following are equivalent.
	
	\begin{enumerate}[label=\textup{(\roman*)}]
		\item \(\mathcal F\) is family-pointwise exact at height \(k\), i.e.
		\[
		\mathcal F\subseteq\mathcal O_k(\mathcal U).
		\]
		
		\item \(\mathcal F\) is covered by the modal basis, i.e.
		\[
		\bigl( \forall \mathbf P \in \mathcal F \bigr) \ \bigl( \exists \mathbf B\in B^{\mathfrak m}_k(\mathcal U) \bigr) \ \mathbf B\le_{\mathfrak m,\mathrm{fq}} \mathbf P.
		\]
	\end{enumerate}
\end{theorem}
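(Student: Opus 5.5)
The plan is to prove the two implications separately. Both directions are essentially formal consequences of \cref{def:modal-exact-basis} together with the hypothesis \(\mathcal U=\mathcal U_{\le k}\). The raw-soundness of \(\mathfrak m\) enters only to guarantee that the defining identity of a modal exact basis is consistent with raw heights. It is not needed for a fresh argument here, although I will note where \cref{lem:raw-sound-height-monotonicity} gives an independent check.

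For (i)\(\Rightarrow\)(ii), take \(\mathbf P\in\mathcal F\). By (i), \(\mathbf P\in\mathcal O_k(\mathcal U)\). By the defining equality
\[
\mathcal O_k(\mathcal U)=\{\mathbf P\in\mathcal U_{\le k}:\exists \mathbf B\in B^{\mathfrak m}_k(\mathcal U)\text{ with }\mathbf B\le_{\mathfrak m,\fq}\mathbf P\},
\]
there is some \(\mathbf B\in B^{\mathfrak m}_k(\mathcal U)\) with \(\mathbf B\le_{\mathfrak m,\fq}\mathbf P\). This is exactly (ii).

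For (ii)\(\Rightarrow\)(i), take \(\mathbf P\in\mathcal F\) with a witness \(\mathbf B\in B^{\mathfrak m}_k(\mathcal U)\) such that \(\mathbf B\le_{\mathfrak m,\fq}\mathbf P\). Since \(\mathcal F\subseteq\mathcal U=\mathcal U_{\le k}\), we have \(\mathbf P\in\mathcal U_{\le k}\). So \(\mathbf P\) belongs to the right-hand set in the basis identity, hence \(\mathbf P\in\mathcal O_k(\mathcal U)\).

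For a more intrinsic argument, which I would include as a remark, use raw-soundness directly. \cref{lem:raw-sound-height-monotonicity} gives \(k=\SCIG(\mathbf B)\le\SCIG(\mathbf P)\), and \(k\)-boundedness gives \(\SCIG(\mathbf P)\le k\). Hence \(\SCIG(\mathbf P)=k\). Note that \(\SCIG(\mathbf B)=k\) because \(B^{\mathfrak m}_k(\mathcal U)\subseteq\mathcal O_k(\mathcal U)\).

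There is no genuine obstacle. The only point needing care is to use the truncation \(\mathcal U=\mathcal U_{\le k}\) in the direction (ii)\(\Rightarrow\)(i), which is precisely the issue flagged in \cref{rem:truncation}. Without it, a target of height greater than \(k\) could be covered by the basis. I would therefore present the intrinsic argument as the main proof, since it also explains why raw-soundness is assumed, and mention the definitional shortcut alongside it.
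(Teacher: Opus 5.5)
Your proposal is correct and your main argument is the same as the paper's proof. In the paper, (i)$\Rightarrow$(ii) is read off from the basis identity, and (ii)$\Rightarrow$(i) combines $\SCIG(\mathbf B)=k$, raw-soundness via \cref{lem:raw-sound-height-monotonicity}, and the truncation $\mathcal U=\mathcal U_{\le k}$. Your definitional shortcut for (ii)$\Rightarrow$(i) is also valid, and it correctly shows that, once the basis identity is assumed, raw-soundness is logically redundant for this equivalence.
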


\begin{proof}
	Assume first that
	\[
	\mathcal F \subseteq \mathcal O_k(\mathcal U).
	\]
	Let \(\mathbf P\in \mathcal F\). Since \(\mathbf P \in\mathcal O_k(\mathcal U)\), and since $B^{\mathfrak m}_k(\mathcal U)$	is a modal exact basis,
	\[
	\mathbf P \in \bigl\{ \mathbf Q \in \mathcal U_{\le k} : \exists \mathbf B\in B^{\mathfrak m}_k(\mathcal U) \text{ with } \mathbf B\le_{\mathfrak m,\mathrm{fq}} \mathbf Q \bigr\}.
	\]
	Hence there exists
	\[
	\mathbf B \in B^{\mathfrak m}_k(\mathcal U)
	\]
	such that
	\[
	\mathbf B \le_{\mathfrak m,\mathrm{fq}} \mathbf P.
	\]
	Thus \(\mathcal F\) is covered by the modal basis.
	
	Conversely, assume that \(\mathcal F\) is covered by the modal basis. Let \(\mathbf P \in \mathcal F\). Choose
	\[
	\mathbf B \in B^{\mathfrak m}_k(\mathcal U)
	\]
	such that
	\[
	\mathbf B \le_{\mathfrak m,\mathrm{fq}} \mathbf P.
	\]
	Because
	\[
	B^{\mathfrak m}_k(\mathcal U) \subseteq \mathcal O_k(\mathcal U),
	\]
	we have
	\[
	\mathrm{SCI}_G(\mathbf B)=k.
	\]
	By raw-soundness and \cref{lem:raw-sound-height-monotonicity},
	\[
	k=\mathrm{SCI}_G(\mathbf B) \le \mathrm{SCI}_G(\mathbf P).
	\]
	But
	\[
	\mathbf P \in \mathcal F \subseteq \mathcal U= \mathcal U_{\le k},
	\]
	so
	\[
	\mathrm{SCI}_G(\mathbf P) \le k.
	\]
	Therefore
	\[
	\mathbf P \in\mathcal O_k(\mathcal U).
	\]
	Since \(\mathbf P\in \mathcal F\) was arbitrary,
	\[
	\mathcal F \subseteq \mathcal O_k(\mathcal U).
	\]
\end{proof}

\begin{corollary}[Sufficient modal witnesses for family-pointwise exactness]\label{cor:sufficient-modal-witnesses-family-exactness}
	Let $\mathfrak m\preceq\raw$ be raw-sound, let \(\mathcal U= \mathcal U_{\le k}\), and let $\mathcal F \subseteq \mathcal U$. Suppose there exists a set
	\[
	\mathcal W \subseteq \mathcal O_k(\mathcal U)
	\]
	such that for every \(\mathbf P\in \mathcal F\) there is \(\mathbf W_P \in \mathcal W\) with
	\[
	\mathbf W_P \le_{\mathfrak m,\mathrm{fq}} \mathbf P.
	\]
	Then
	\[
	\mathcal F \subseteq\mathcal O_k(\mathcal U).
	\]
\end{corollary}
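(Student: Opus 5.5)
The plan is to derive the corollary directly from the lower-bound mechanism in \cref{lem:raw-sound-height-monotonicity}, combined with the $k$-boundedness of the ambient. There is no need to pass through a modal exact basis. (Alternatively, one could observe that $\mathcal W$ plays the role of the basis in direction (ii)$\Rightarrow$(i) of \cref{thm:modal-witness-basis-family-exactness}. But that theorem assumes a full basis, which is not given here, so I would repeat only the relevant half of its argument.)

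Fix $\mathbf P\in\mathcal F$ and choose the witness $\mathbf W_P\in\mathcal W$ with $\mathbf W_P\le_{\mathfrak m,\mathrm{fq}}\mathbf P$. The argument then has three steps.
\begin{enumerate}
\item Since $\mathcal W\subseteq\mathcal O_k(\mathcal U)$, we have $\SCIG(\mathbf W_P)=k$.
\item Since $\mathfrak m\preceq\raw$ is raw-sound, \cref{lem:raw-sound-height-monotonicity} yields $k=\SCIG(\mathbf W_P)\le\SCIG(\mathbf P)$. Via \cref{lem:raw-fq-monotonicity}, this is just the pullback of a raw type-$G$ tower through the continuous decoder and the finite reconstructions.
\item Since $\mathbf P\in\mathcal F\subseteq\mathcal U=\mathcal U_{\le k}$, we have $\SCIG(\mathbf P)\le k$.
\end{enumerate}
Hence $\SCIG(\mathbf P)=k$, so $\mathbf P\in\mathcal O_k(\mathcal U)$. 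Because $\mathbf P\in\mathcal F$ was arbitrary, $\mathcal F\subseteq\mathcal O_k(\mathcal U)$.

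I do not expect any genuine obstacle. The only point needing care is that the witnesses are taken from the exact layer of $\mathcal U$ itself, so that their height is exactly $k$ and not merely at least $k$. One should also note that the witness may depend on $\mathbf P$ and need not form a basis. All the substantive content, namely the soundness of transport for type-$G$ towers, is already contained in \cref{lem:raw-fq-monotonicity}.
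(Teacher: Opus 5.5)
Your proof is correct and follows the paper's argument essentially verbatim. The witness has exact height $k$, raw-soundness via \cref{lem:raw-sound-height-monotonicity} gives $k\le\SCIG(\mathbf P)$, and $k$-boundedness of $\mathcal U$ gives the reverse inequality. One small remark: the argument only uses $\SCIG(\mathbf W_P)\ge k$, so your closing comment that care is needed to get ``exactly $k$ and not merely at least $k$'' has the direction backwards, though this does not affect the proof.
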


\begin{proof}
	For each \(\mathbf P \in \mathcal F\), choose \(\mathbf W_P \in \mathcal W\) with
	\[
	\mathbf W_P \le_{\mathfrak m,\mathrm{fq}} \mathbf P.
	\]
	Since \(\mathbf W_P \in \mathcal O_k(\mathcal U)\), $\mathrm{SCI}_G(\mathbf W_P)=k$. By raw-soundness,
	\[
	k=\mathrm{SCI}_G(\mathbf W_P) \le \mathrm{SCI}_G(\mathbf P).
	\]
	Since \(\mathbf P \in \mathcal U= \mathcal U_{\le k}\), $\mathrm{SCI}_G(\mathbf P)\le k$. Hence
	\[
	\mathrm{SCI}_G(\mathbf P)=k.
	\]
	Thus every \(\mathbf P\in \mathcal F\) has exact height \(k\).
\end{proof}

This criterion is what makes modal exact bases useful also for potential applications: once a canonical set of exact sources is known, checking exactness of a family reduces to checking modal transports from those sources.

\begin{remark}[What changes for non-raw-sound modalities]\label{rem:non-raw-sound-modalities}
	The raw-sound hypothesis
	\[
	\mathfrak m\preceq\raw
	\]
	is essential for statements about raw type-\(G\) exact height. For modalities not below \(\raw\), such as decoder-only Borel modalities with non-continuous decoders, the same
	argument need not apply because the transport may not preserve raw type-\(G\) limits. For such modalities, the corresponding exact-basis problem should be formulated relative to an implementation height $\mathrm{SCI}_C$ for which the modality is sound:
	\[
	\mathbf S \le_{\mathfrak m,\mathrm{fq}} \mathbf P \quad\Longrightarrow\quad
	\mathrm{SCI}_C(\mathbf S) \le \mathrm{SCI}_C(\mathbf P).
	\]
	Thus the raw-sound case is the correct setting for modal refinements of raw family-pointwise exactness, while other modalities require their own implementation semantics.
\end{remark}

\section{Geometric Recovery Of The CH23 Theorem Block}\label{sec:geometric-recovery}
We now define the geometric modality for the CH23 theorem block and prove that it recovers the expected two-source structure. The point is not to forbid the raw collapse
by declaration, but to restrict encodings to the natural operator-theoretic operations used in the spectral representations.

\subsection{The CH23 Neutral Geometric Modality}
The allowed geometric encodings preserve the spectral window predicates. Neutral stabilizations are permitted only when the stabilizing operator is fixed and invisible to all singleton windows in \(J\), both spectrally and pseudospectrally.

\begin{definition}[CH23 representation labels and realized operators]\label{def:ch23-realized-operator-classes}
	Fix a connected countable graph
	\[
	\mathcal G=(V,E_{\mathcal G}),\qquad V=\{v_1,v_2,\ldots\}.
	\]
	Let
	\[
	\mathfrak R:=\{\mathrm{diag},\mathrm{gen},\mathrm{graph}\}
	\]
	be the three CH23 representation labels used in the singleton-window block. Put
	\[
	H_{\mathrm{diag}}:=\ell^2(\mathbb N),\qquad
	H_{\mathrm{gen}}:=\ell^2(\mathbb N),\qquad
	H_{\mathrm{graph}}:=\ell^2(V).
	\]
	Let
	\[
	\Omega_{\mathrm{diag}},\qquad
	\Omega_{\mathrm{gen}},\qquad
	\Omega_{\mathrm{graph}}
	\]
	denote respectively the diagonal, general \(\ell^2(\mathbb N)\), and graph CH23 operator instance classes. For each representation label
	\[
	\rho\in\mathfrak R
	\]
	we write
	\[
	\operatorname{op}_\rho:\Omega_\rho\longrightarrow \mathcal C(H_\rho)
	\]
	for the map which sends a represented CH23 instance to its realized closed densely defined operator on \(H_\rho\). Here \(\mathcal C(H_\rho)\) denotes the class of
	closed densely defined linear operators on \(H_\rho\). Thus
	\[
	\operatorname{op}_{\mathrm{diag}}(A)=A,\qquad
	\operatorname{op}_{\mathrm{gen}}(A)=A,
	\]
	while \(\operatorname{op}_{\mathrm{graph}}(A)\) is the closed operator on \(\ell^2(V)\) represented by the graph-coefficient data of \(A\).
	
	For a compact interval \(J\subseteq\mathbb R\), define the singleton instance spaces
	\[
	\mathcal X_\rho(J):=\Omega_\rho\times \mathcal K_{\mathrm{sgl}}(J), \qquad \mathcal K_{\mathrm{sgl}}(J):=\{\{z\}:z\in J\}.
	\]
	If \((A,K)\in\mathcal X_\rho(J)\), we shall often write
	\[
	\sigma(A),\qquad \sigma_\varepsilon(A)
	\]
	as shorthand for
	\[
	\sigma(\operatorname{op}_\rho(A)),\qquad
	\sigma_\varepsilon(\operatorname{op}_\rho(A)).
	\]
	This convention is only notational: all spectral and pseudospectral statements are about the realized operator \(\operatorname{op}_\rho(A)\).
\end{definition}

\begin{definition}[CH23 \((J,\varepsilon)\)-neutral geometric encodings]\label{def:geom-encoding}
	Fix a compact interval $J\subseteq\mathbb R$ with nonempty interior and fix $\varepsilon>0$. A CH23 \((J,\varepsilon)\)-neutral elementary encoding of type
	\[
	\rho\longrightarrow\rho', \qquad
	\rho,\rho'\in\mathfrak R,
	\]
	is a map
	\[
	e:\mathcal X_\rho(J)\longrightarrow \mathcal X_{\rho'}(J), \qquad e(A,K)=(A',K),
	\]
	of one of the following forms. In all cases, the data defining the map are fixed once and for all and are independent of the input \((A,K)\).
	
	\begin{enumerate}[label=\textup{(E\arabic*)}]
		\item \textbf{Unitary conjugacy:}
		There is a fixed unitary operator
		\[
		U:H_\rho\longrightarrow H_{\rho'}
		\]
		and a fixed map
		\[
		R_U:\Omega_\rho\longrightarrow \Omega_{\rho'}
		\]
		such that, for every \(A\in\Omega_\rho\),
		\[
		\operatorname{op}_{\rho'}(R_U(A)) = U\,\operatorname{op}_\rho(A)\,U^{-1}.
		\]
		The elementary encoding is
		\[
		e(A,K):=(R_U(A),K).
		\]
		This generator is admissible only for source-target types \(\rho\to\rho'\) for which the fixed representation map \(R_U\) is part of the specified CH23 geometric
		structure.
		
		\item \textbf{Basis relabeling on \(\ell^2(\mathbb N)\):}
		Here
		\[
		\rho,\rho'\in\{\mathrm{diag},\mathrm{gen}\}.
		\]
		Let
		\[
		\pi:\mathbb N\longrightarrow\mathbb N
		\]
		be a fixed bijection and let
		\[
		U_\pi:\ell^2(\mathbb N)\longrightarrow\ell^2(\mathbb N), \qquad U_\pi e_j=e_{\pi(j)}
		\]
		be the corresponding permutation unitary. A basis relabeling is the elementary encoding
		\[
		e(A,K):=(R_\pi(A),K),
		\]
		where
		\[
		R_\pi:\Omega_\rho\longrightarrow\Omega_{\rho'}
		\]
		is a fixed representation map satisfying
		\[
		\operatorname{op}_{\rho'}(R_\pi(A)) = U_\pi\,\operatorname{op}_\rho(A)\,U_\pi^{-1}
		\]
		for $A\in\Omega_\rho$. For example, in the diagonal-to-diagonal case, if
		\[
		\operatorname{op}_{\mathrm{diag}}(A)e_j=a_j e_j,
		\]
		then \(R_\pi(A)\) is the diagonal operator with diagonal coefficients $a_{\pi^{-1}(j)}$.
		
		\item \textbf{Graph relabeling:}
		Here
		\[
		\rho=\rho'=\mathrm{graph}.
		\]
		Let
		\[
		\alpha:V\longrightarrow V
		\]
		be a fixed graph automorphism and let
		\[
		U_\alpha:\ell^2(V)\longrightarrow\ell^2(V), \qquad U_\alpha \mathbf 1_v=\mathbf 1_{\alpha(v)}
		\]
		be the induced unitary. A graph relabeling is the elementary encoding
		\[
		e(A,K):=(R_\alpha(A),K),
		\]
		where
		\[
		R_\alpha:\Omega_{\mathrm{graph}}\longrightarrow \Omega_{\mathrm{graph}}
		\]
		is the fixed relabeled graph representation satisfying
		\[
		\operatorname{op}_{\mathrm{graph}}(R_\alpha(A)) = U_\alpha\,\operatorname{op}_{\mathrm{graph}}(A)\,U_\alpha^{-1},
		\]
		where $A\in\Omega_{\mathrm{graph}}$.
		
		\item \textbf{Representation inclusion:}
		There are fixed data
		\[
		W_{\rho,\rho'}:H_\rho\longrightarrow H_{\rho'}
		\]
		unitary, and
		\[
		\iota_{\rho,\rho'}:\Omega_\rho\longrightarrow \Omega_{\rho'}
		\]
		such that, for every \(A\in\Omega_\rho\),
		\[
		\operatorname{op}_{\rho'}(\iota_{\rho,\rho'}(A)) = W_{\rho,\rho'}\,\operatorname{op}_\rho(A)\,W_{\rho,\rho'}^{-1}.
		\]
		The elementary encoding is
		\[
		e(A,K):=(\iota_{\rho,\rho'}(A),K).
		\]
		The representation inclusions we use in this work are
		\[
		\iota_{\mathrm{diag},\mathrm{gen}}:\Omega_{\mathrm{diag}}\to\Omega_{\mathrm{gen}}, \qquad
		W_{\mathrm{diag},\mathrm{gen}}=\operatorname{id}_{\ell^2(\mathbb N)},
		\]
		which views a diagonal operator as a member of the general CH23 class, and
		\[
		\iota_{\mathrm{diag},\mathrm{graph}}:\Omega_{\mathrm{diag}}\to\Omega_{\mathrm{graph}}, \qquad
		W_{\mathrm{diag},\mathrm{graph}}e_j=\mathbf 1_{v_j},
		\]
		which represents the same diagonal operator on the graph Hilbert space \(\ell^2(V)\). Explicitly, if
		\[
		Ae_j=a_j e_j,
		\]
		then the graph representation \(\iota_{\mathrm{diag},\mathrm{graph}}(A)\) has graph coefficients
		\[
		\alpha(v_i,v_j)=
		\begin{cases}
			a_j, & i=j,\\
			0, & i\neq j,
		\end{cases}
		\]
		with local support sets
		\[
		S_{v_j}=\{v_j\}.
		\]
		
		\item \textbf{Neutral direct-sum stabilization:}
		There are fixed data consisting of a Hilbert space \(H_0\), a closed densely defined operator
		\[
		T_0\in\mathcal C(H_0),
		\]
		a unitary
		\[
		W:H_\rho\oplus H_0\longrightarrow H_{\rho'},
		\]
		and a fixed representation map
		\[
		R_{T_0,W}:\Omega_\rho\longrightarrow \Omega_{\rho'}
		\]
		such that, for every \(A\in\Omega_\rho\),
		\[
		\operatorname{op}_{\rho'}(R_{T_0,W}(A)) = W\bigl(\operatorname{op}_\rho(A)\oplus T_0\bigr)W^{-1}.
		\]
		The elementary encoding is
		\[
		e(A,K):=(R_{T_0,W}(A),K).
		\]
		The stabilizing operator \(T_0\) must be \((J,\varepsilon)\)-neutral, meaning
		\[
		\sigma(T_0)\cap J=\varnothing, \qquad \sigma_\varepsilon(T_0)\cap J=\varnothing.
		\]
		Equivalently, for every singleton \(K=\{z\}\subset J\),
		\[
		\sigma(T_0)\cap K=\varnothing, \qquad \sigma_\varepsilon(T_0)\cap K=\varnothing.
		\]
	\end{enumerate}
	
	A CH23 \((J,\varepsilon)\)-neutral geometric encoding of type
	\[
	\rho\longrightarrow\rho'
	\]
	is any finite composition
	\[
	E=e_N\circ\cdots\circ e_1
	\]
	of CH23 \((J,\varepsilon)\)-neutral elementary encodings with compatible intermediate types
	\[
	\rho=\rho_0\longrightarrow\rho_1\longrightarrow\cdots\longrightarrow\rho_N=\rho'.
	\]
	For \(N=0\), the identity map on \(\mathcal X_\rho(J)\) is allowed. Since each elementary encoding leaves the compact input unchanged, every generated encoding has the form
	\[
	E(A,K)=(A_E,K).
	\]
	
	No representation selector is admissible merely because some abstract represented copy of an operator exists. The maps
	\[
	R_U,\quad R_\pi,\quad R_\alpha,\quad \iota_{\rho,\rho'},\quad R_{T_0,W}
	\]
	are part of the fixed CH23 geometric structure and are not allowed to depend on the particular input \((A,K)\) except through the operator-theoretic formulae.
\end{definition}

\begin{definition}[The CH23 singleton-window geometric modality]\label{def:geomCH23-modality}
	Let
	\[
	\mathcal U^\sharp_{2,J,\varepsilon} =
	\{\mathcal P^\sigma_{J,\mathrm{diag}}, \mathcal P^\sigma_{J,\mathrm{gen}}, \mathcal P^\sigma_{J,\mathrm{graph}},
	\mathcal P^{\sigma\varepsilon}_{J,\varepsilon,\mathrm{diag}},
	\mathcal P^{\sigma\varepsilon}_{J,\varepsilon,\mathrm{gen}},
	\mathcal P^{\sigma\varepsilon}_{J,\varepsilon,\mathrm{graph}}\}.
	\]
	For
	\[
	\mathbf S,\mathbf P\in \mathcal U^\sharp_{2,J,\varepsilon},
	\]
	let
	\[
	\rho(\mathbf S),\rho(\mathbf P)\in\mathfrak R
	\]
	denote their representation labels. The encoding class of the CH23 geometric modality is
	\[
	\mathcal E_{\mathrm{geom}^{\mathrm{CH23}}_{J,\varepsilon}}(\mathbf S,\mathbf P) :=
	\{E:\mathcal X_{\rho(\mathbf S)}(J)\to \mathcal X_{\rho(\mathbf P)}(J): E\text{ is a CH23 }(J,\varepsilon)\text{-neutral geometric encoding}\}.
	\]
	The decoder class is
	\[
	\mathcal D_{\mathrm{geom}^{\mathrm{CH23}}_{J,\varepsilon}}(\mathbf P,\mathbf S) := C(Y_{P},Y_{S}),
	\]
	and the reconstruction class is the raw finite-transcript class:
	\[
	\Theta_{\mathrm{geom}^{\mathrm{CH23}}_{J,\varepsilon}}(\lambda;\vec\gamma) :=
	\{\vartheta:\operatorname{im}(\vec\gamma)\to V_\lambda\}.
	\]
	Thus the CH23 geometric modality restricts only the instance encoding; decoders remain continuous and transcript reconstructions remain arbitrary finite maps.
	
	If a global modality on all typed SCI problems is desired, extend this local modality by allowing only identity encodings outside the full CH23 singleton-window subcategory.
	All arguments in \cref{sec:geometric-recovery} use only the local part on \(\mathcal U^\sharp_{2,J,\varepsilon}\).
\end{definition}

\Cref{def:geom-encoding} and \cref{def:geomCH23-modality} permit the diagonal-to-general and diagonal-to-graph representation moves used below, but they
exclude the artificial encodings from \cref{thm:raw-collapse}, whose purpose was to store a predicate in a newly manufactured accumulation pattern. Those raw encodings are not unitary conjugacies, relabelings, representation inclusions, or neutral stabilizations of the original operator.

\begin{remark}[Why this is not circular]\label{rem:not-circular-geom}
	The modality is not defined as ``the maps for which the theorem is true.'' It is generated by standard operator-theoretic operations: unitary equivalence, representation inclusion, relabeling, and fixed neutral stabilization. The forbidden move is precisely the raw coding move from \cref{thm:raw-collapse}: constructing a new infinite diagonal operator whose accumulation pattern depends on the source predicate.
\end{remark}

The key structural property of this modality is predicate preservation.

The spectral and pseudospectral identities used below should be understood in the closed-operator sense. For matrices, unitary invariance of the resolvent norm and pseudospectra is recorded in \cite[§2, p.~17, (2.11)-(2.12)]{trefethen2020spectra}, and the matrix direct-sum pseudospectral identity appears in
\cite[§2, p.~20, Thm.~2.4(iii)]{trefethen2020spectra}. The operator-level definitions of pseudospectra are given in \cite[§4, p.~31, (4.3)-(4.5), Thm.~4.3]{trefethen2020spectra}.
In this work we use the closed CH23 convention
\[
\sigma_\varepsilon(A) = \overline{\{z\in\mathbb C : \|(A-zI)^{-1}\|>\varepsilon^{-1}\}},
\]
cf. \cite[§3, p.~9]{ColbrookHansen22}. The required operator identities are elementary consequences of the resolvent formulae: if \(B=UAU^{-1}\) with \(U\) unitary, then
\[
B-zI=U(A-zI)U^{-1}, \qquad (B-zI)^{-1}=U(A-zI)^{-1}U^{-1},
\]
and hence
\[
\sigma(B)=\sigma(A), \qquad \sigma_\varepsilon(B)=\sigma_\varepsilon(A).
\]
If \(B=A\oplus T_0\), then
\[
\rho(B)=\rho(A)\cap\rho(T_0), \qquad (B-zI)^{-1}=(A-zI)^{-1}\oplus (T_0-zI)^{-1},
\]
and therefore
\[
\|(B-zI)^{-1}\| = \max\{\|(A-zI)^{-1}\|,\|(T_0-zI)^{-1}\|\}.
\]
Consequently,
\[
\sigma(A\oplus T_0)=\sigma(A)\cup\sigma(T_0), \qquad \sigma_\varepsilon(A\oplus T_0) = \sigma_\varepsilon(A)\cup\sigma_\varepsilon(T_0).
\]

\begin{lemma}[Geometric encodings preserve both window predicates]\label{lem:geom-preserves}
	Let
	\[
	E:\mathcal X_\rho(J)\to\mathcal X_{\rho'}(J)
	\]
	be a CH23 \((J,\varepsilon)\)-neutral geometric encoding, and write
	\[
	E(A,K)=(A_E,K).
	\]
	Then for every admissible input
	\[
	(A,K)\in\mathcal X_\rho(J), \qquad K=\{z\}\subset J,
	\]
	one has
	\[
	\sigma(\operatorname{op}_{\rho'}(A_E))\cap K=\varnothing \quad\Longleftrightarrow\quad
	\sigma(\operatorname{op}_{\rho}(A))\cap K=\varnothing,
	\]
	and
	\[
	\sigma_\varepsilon(\operatorname{op}_{\rho'}(A_E))\cap K=\varnothing \quad\Longleftrightarrow\quad
	\sigma_\varepsilon(\operatorname{op}_{\rho}(A))\cap K=\varnothing.
	\]
\end{lemma}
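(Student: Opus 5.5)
The plan is an induction on the length \(N\) of the composition \(E=e_N\circ\cdots\circ e_1\) in \cref{def:geom-encoding}. The relation ``same spectral and pseudospectral window predicate at every singleton \(K\subset J\)'' is an equivalence relation on inputs, and in particular transitive. It therefore suffices to prove both equivalences for a single elementary encoding \(e:\mathcal X_\rho(J)\to\mathcal X_{\rho'}(J)\). The case \(N=0\) is trivial because \(E=\operatorname{id}\). Since every elementary encoding leaves \(K\) unchanged, the composite also has the form \(E(A,K)=(A_E,K)\). Hence the predicates for \(A_E\) are obtained by chaining the single-step equivalences through the intermediate operators \(A_1,\dots,A_{N-1}\), all tested against the same \(K\).

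For the elementary cases (E1)--(E4) I will use the unitary-invariance identities recorded just before the lemma. In each case the defining property of the fixed representation map gives \(\operatorname{op}_{\rho'}(A')=U\operatorname{op}_\rho(A)U^{-1}\) for a fixed unitary \(U\):
\begin{itemize}
\item for (E1), \(U\) is the given unitary;
\item for (E2), \(U=U_\pi\);
\item for (E3), \(U=U_\alpha\);
\item for (E4), \(U=W_{\rho,\rho'}\).
\end{itemize}
I will check that the resolvent computation \((B-zI)^{-1}=U(A-zI)^{-1}U^{-1}\) applies to closed densely defined operators. This holds because \(U\) maps \(\mathcal D(A)\) onto \(\mathcal D(B)\), so \(\rho(B)=\rho(A)\) and the resolvent norms agree. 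It follows that \(\sigma(B)=\sigma(A)\). Since the resolvent-norm superlevel sets coincide, their closures coincide as well, which gives \(\sigma_\varepsilon(B)=\sigma_\varepsilon(A)\) in the closed CH23 convention. Intersecting both sides with \(K\) yields the two equivalences.

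For (E5) I will write \(\operatorname{op}_{\rho'}(A')=W(\operatorname{op}_\rho(A)\oplus T_0)W^{-1}\). The unitary step reduces this to \(A\oplus T_0\). The direct-sum identities stated before the lemma then give \(\sigma(A\oplus T_0)\cap K=(\sigma(A)\cap K)\cup(\sigma(T_0)\cap K)\), and the same decomposition for \(\sigma_\varepsilon\). By \((J,\varepsilon)\)-neutrality, for \(K=\{z\}\subset J\) both \(\sigma(T_0)\cap K\) and \(\sigma_\varepsilon(T_0)\cap K\) are empty. So the stabilized predicates equal the original ones.

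I expect the main obstacle to be the pseudospectral direct-sum identity \(\sigma_\varepsilon(A\oplus T_0)=\sigma_\varepsilon(A)\cup\sigma_\varepsilon(T_0)\) in the closed convention, since the closures must be handled carefully. I will start from \(\|(B-zI)^{-1}\|=\max\{\|(A-zI)^{-1}\|,\|(T_0-zI)^{-1}\|\}\) on \(\rho(B)=\rho(A)\cap\rho(T_0)\). From this, the open superlevel set of \(B\) is the union of the corresponding sets for \(A\) and \(T_0\), intersected with \(\rho(B)\). Because the closure of a finite union is the union of the closures, taking closures gives the identity once spectral points are accounted for. For those points I will use the convention that the resolvent norm is \(+\infty\) on the spectrum, or equivalently the standard fact that \(\sigma(\cdot)\subseteq\sigma_\varepsilon(\cdot)\). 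This fact lets me absorb \(\sigma(A)\) and \(\sigma(T_0)\) into the respective pseudospectra. It also shows that the only difference between intersecting the open sets with \(\rho(B)\) and intersecting them with \(\rho(A)\) or \(\rho(T_0)\) lies in sets already contained in the union. The paper states this identity before the lemma, so I may simply invoke it; the remaining argument is routine bookkeeping.
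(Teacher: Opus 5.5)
Your proposal is correct and follows essentially the same route as the paper: reduce to elementary encodings and chain over the composition, use unitary invariance of the spectrum and of resolvent norms for (E1)--(E4), and use the direct-sum identities together with \((J,\varepsilon)\)-neutrality of \(T_0\) for (E5). Your additional care is a harmless refinement of identities the paper simply states before the lemma; this concerns the domains \(U\mathcal D(A)\) and the closure in the closed pseudospectral convention. In particular, with the \(+\infty\) resolvent-norm convention on the spectrum, the max-identity holds for every \(z\), so the superlevel set of \(A\oplus T_0\) is exactly the union of those of \(A\) and \(T_0\), and closure commutes with that finite union.
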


\begin{proof}
	It suffices to check the claim for each elementary encoding and then use induction over finite compositions.
	
	For unitary conjugacy, basis relabeling, graph relabeling, and representation inclusion, there is a fixed unitary
	\[
	U:H_\rho\to H_{\rho'}
	\]
	such that, for the encoded instance \(E(A,K)=(A_E,K)\),
	\[
	\operatorname{op}_{\rho'}(A_E) = U\,\operatorname{op}_\rho(A)\,U^{-1}.
	\]
	Hence, for every \(z\in\mathbb C\),
	\[
	\operatorname{op}_{\rho'}(A_E)-zI = U(\operatorname{op}_\rho(A)-zI)U^{-1}.
	\]
	Therefore
	\[
	\sigma(\operatorname{op}_{\rho'}(A_E)) = \sigma(\operatorname{op}_{\rho}(A)),
	\]
	and, on the common resolvent set,
	\[
	(\operatorname{op}_{\rho'}(A_E)-zI)^{-1} =
	U(\operatorname{op}_{\rho}(A)-zI)^{-1}U^{-1}.
	\]
	Thus
	\[
	\|(\operatorname{op}_{\rho'}(A_E)-zI)^{-1}\| =
	\|(\operatorname{op}_{\rho}(A)-zI)^{-1}\|,
	\]
	and consequently
	\[
	\sigma_\varepsilon(\operatorname{op}_{\rho'}(A_E)) =
	\sigma_\varepsilon(\operatorname{op}_{\rho}(A)).
	\]
	Since the compact input \(K\) is unchanged, both window predicates are preserved.
	
	For neutral direct-sum stabilization, there are fixed data \(T_0,W\) such that
	\[
	\operatorname{op}_{\rho'}(A_E) = W(\operatorname{op}_\rho(A)\oplus T_0)W^{-1}.
	\]
	By unitary invariance and the direct-sum resolvent identity,
	\[
	\sigma(\operatorname{op}_{\rho'}(A_E)) = \sigma(\operatorname{op}_{\rho}(A))\cup\sigma(T_0),
	\]
	and
	\[
	\sigma_\varepsilon(\operatorname{op}_{\rho'}(A_E)) = \sigma_\varepsilon(\operatorname{op}_{\rho}(A))\cup\sigma_\varepsilon(T_0).
	\]
	The \((J,\varepsilon)\)-neutrality assumption gives
	\[
	\sigma(T_0)\cap K=\varnothing, \qquad
	\sigma_\varepsilon(T_0)\cap K=\varnothing
	\]
	for every singleton \(K\subset J\). Hence both window predicates are preserved.
	
	Finite compositions preserve the two predicates because each elementary encoding preserves them. This proves the lemma.
\end{proof}

Thus a geometric encoding cannot turn two instances with the same pseudospectral window answer into different target pseudospectral answers, nor can it do so for the exact
spectral window answer. This makes the cross-block separation proofs very short.

\subsection{Within-Block Reductions And Cross-Block Geometric Separations}
First we check the positive reductions inside each block. These are exactly the natural representation embeddings.

\begin{lemma}[Within-block geometric source reductions]\label{lem:within-block-geom}
	One has
	\[
	\Psd\lemfq{\geom_{J,\varepsilon}^{\mathrm{CH23}}}\Psd,
	\qquad
	\Psd\lemfq{\geom_{J,\varepsilon}^{\mathrm{CH23}}}\Psg,
	\qquad
	\Psd\lemfq{\geom_{J,\varepsilon}^{\mathrm{CH23}}}\Psr,
	\]
	and
	\[
	\Ppd\lemfq{\geom_{J,\varepsilon}^{\mathrm{CH23}}}\Ppd,
	\qquad
	\Ppd\lemfq{\geom_{J,\varepsilon}^{\mathrm{CH23}}}\Ppg,
	\qquad
	\Ppd\lemfq{\geom_{J,\varepsilon}^{\mathrm{CH23}}}\Ppr.
	\]
\end{lemma}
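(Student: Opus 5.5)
The proof is a direct verification. For each of the six claimed reductions I will exhibit an admissible encoding in $\mathcal E_{\geom^{\mathrm{CH23}}_{J,\varepsilon}}$ and take the decoder $D=\operatorname{id}_{\{0,1\}}$, which is continuous. I will then check the output identity and the finite reconstruction requirement. Since the modality inherits the raw reconstruction class, arbitrary finite transcript maps are allowed, so the only genuinely new point beyond \cref{lem:raw-within-block-representation-embeddings} is that the encodings used there belong to the geometric class.

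The two reflexive reductions $\Psd\lemfq{\geom^{\mathrm{CH23}}_{J,\varepsilon}}\Psd$ and $\Ppd\lemfq{\geom^{\mathrm{CH23}}_{J,\varepsilon}}\Ppd$ follow from the case $N=0$ of \cref{def:geom-encoding}, which admits the identity encoding. Each target evaluation $\lambda$ is reconstructed from the one-entry transcript $(\lambda)$ by the identity map. This is exactly the reflexivity argument of \cref{prop:modal-preorder}.

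For the diagonal-to-general reductions I will use the elementary encoding (E4) with $\iota_{\mathrm{diag},\mathrm{gen}}$ and $W_{\mathrm{diag},\mathrm{gen}}=\operatorname{id}$. For the diagonal-to-graph reductions I will use (E4) with $\iota_{\mathrm{diag},\mathrm{graph}}$ and $W_{\mathrm{diag},\mathrm{graph}}e_j=\mathbf 1_{v_j}$. Both are explicitly listed in \cref{def:geom-encoding}, so each is a one-step geometric encoding.

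The output identity, for the spectral and the pseudospectral targets simultaneously, follows from \cref{lem:geom-preserves}. The encoding leaves $K$ unchanged and preserves both the predicate $\sigma(\cdot)\cap K=\varnothing$ and the predicate $\sigma_\varepsilon(\cdot)\cap K=\varnothing$. Hence $\Xi_P(E(A,K))=\Xi_S(A,K)$, and the identity decoder suffices.

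For finite reconstruction I will reuse the transcript data from the proof of \cref{lem:raw-within-block-representation-embeddings}:
\begin{itemize}
\item a matrix entry $\langle Ae_j,e_i\rangle$, or a graph coefficient $\alpha(v_i,v_j)$, is reconstructed from $\mu_j$ when $i=j$ and by the constant $0$ otherwise;
\item the dispersion witness data are reconstructed by the constants $f(n)=n$ and $c_n=2^{-n}$, using $(I-P_n)AP_n=0$;
\item the local supports are the constants $S_{v_j}=\{v_j\}$;
\item the approximants $\rho_n$ are passed through via the transcript $(\rho_n)$.
\end{itemize}
All of these are raw-admissible reconstructions, which is all the modality demands.

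The only step requiring care is the claim that the encodings are genuinely generated by (E1)–(E5), rather than merely raw. This holds because the representation maps $\iota_{\rho,\rho'}$ are fixed, do not depend on $(A,K)$ beyond the operator formula, and are named in the definition. I would also remark that the auxiliary dispersion and support data are part of the fixed representation map $\iota_{\rho,\rho'}$ and not input-dependent choices. With this, combining the three cases gives all six reductions.
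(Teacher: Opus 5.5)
Your proposal is correct and follows the paper's proof essentially step for step. It uses reflexivity for the two diagonal-to-diagonal cases and the representation inclusions of (E4) with the identity decoder for the general and graph targets, together with the same finite reconstructions: $\mu_j$ or $0$ for entries, constant dispersion data $f(n)=n$, $c_n=2^{-n}$, constant supports $S_{v_j}=\{v_j\}$, and passed-through window approximants. The differences are only cosmetic: you obtain the output identity from \cref{lem:geom-preserves} where the paper cites unitary invariance directly, and you treat the graph embedding as the single generator $\iota_{\mathrm{diag},\mathrm{graph}}$ where the paper describes it as a unitary relabeling composed with a representation inclusion.
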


\begin{proof}
	The diagonal-to-diagonal reductions are reflexivity.
	
	For diagonal-to-general, use the representation-inclusion encoding
	\[
	E(A,K):=(A,K),
	\]
	where the same diagonal operator is viewed as a member of the general \(\ell^2(\mathbb N)\)-operator class. This encoding is one of the generators of $\geom^{\mathrm{CH23}}_{J,\varepsilon}$. The decoder is
	\[
	D=\operatorname{id}_{\{0,1\}}.
	\]
	By representation inclusion, the underlying operator and compact input are unchanged. Hence the exact spectral and fixed-\(\varepsilon\) pseudospectral target identities hold.
	
	For the finite-query reconstruction, a target matrix-entry query satisfies
	\[
	\langle Ae_j,e_i\rangle =
	\begin{cases}
		\mu_j(A,K),&i=j,\\
		0,&i\neq j.
	\end{cases}
	\]
	Compact-window approximants are passed through unchanged. The general-class auxiliary bounded-dispersion data can be chosen canonically for diagonal operators, for example
	\[
	f(n)=n, \qquad c_n=2^{-n},
	\]
	because
	\[
	(I-P_n)AP_n=0, \qquad (I-P_n)A^*P_n=0.
	\]
	Thus all target evaluations are reconstructed from finitely many source evaluations and constants. This proves both
	\[
	\mathcal P^\sigma_{J,\diag} \le_{\geom^{\mathrm{CH23}}_{J,\varepsilon},\mathrm{fq}} \mathcal P^\sigma_{J,\gen}
	\]
	and
	\[
	\mathcal P^{\sigma_\varepsilon}_{J,\varepsilon,\diag} \le_{\geom^{\mathrm{CH23}}_{J,\varepsilon},\mathrm{fq}}
	\mathcal P^{\sigma_\varepsilon}_{J,\varepsilon,\gen}.
	\]
	
	For diagonal-to-graph, fix a connected countable graph
	\[
	\mathcal G=(V,E_{\mathcal G}), \qquad V=\{v_1,v_2,\ldots\},
	\]
	and the unitary
	\[
	U:\ell^2(\mathbb N)\to\ell^2(V), \qquad Ue_j=\mathbf 1_{v_j}.
	\]
	Encode by
	\[
	E(A,K):=(UAU^{-1},K).
	\]
	This is a composition of a unitary relabeling and a graph-representation inclusion, hence is \(\geom^{\mathrm{CH23}}_{J,\varepsilon}\)-admissible.
	
	If
	\[
	Ae_j=a_je_j,
	\]
	then
	\[
	UAU^{-1}\mathbf 1_{v_j}=a_j\mathbf 1_{v_j}.
	\]
	Thus the graph coefficient function satisfies
	\[
	\alpha(v_i,v_j) =
	\begin{cases}
		a_j,&i=j,\\
		0,&i\neq j.
	\end{cases}
	\]
	Therefore each graph coefficient evaluation is reconstructed from either one diagonal source evaluation or a constant. Local support sets may be chosen as
	\[
	S_{v_j}:=\{v_j\},
	\]
	so local support queries are constant reconstructions. Compact-window approximants are unchanged by this procedure.
	
	The output identities follow from unitary invariance:
	\[
	\sigma(UAU^{-1})=\sigma(A), \qquad
	\sigma_\varepsilon(UAU^{-1}) = \sigma_\varepsilon(A).
	\]
	This proves the two diagonal-to-graph reductions.
\end{proof}

The cross-block directions fail because exact spectrum and fixed \(\varepsilon\)-pseudospectrum do not determine one another under predicate-preserving geometric encodings.

\begin{lemma}[Geometric cross-block separations]\label{lem:geom-cross-separations}
	For every \(\star\in\{\diag,\gen,\Graph\}\),
	\[
	\Psd \Nlemfq{\geom_{J,\varepsilon}^{\mathrm{CH23}}} \mathcal P^{\sigma_\varepsilon}_{J,\varepsilon,\star},
	\]
	and
	\[
	\Ppd \Nlemfq{\geom_{J,\varepsilon}^{\mathrm{CH23}}} \mathcal P^{\sigma}_{J,\star}.
	\]
\end{lemma}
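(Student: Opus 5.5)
We argue by contradiction in both directions, using \cref{lem:geom-preserves} as the only structural input. Suppose a $\geom^{\mathrm{CH23}}_{J,\varepsilon}$-transport $\Psd\le\mathcal P^{\sigma_\varepsilon}_{J,\varepsilon,\star}$ exists, with geometric encoding $E(A,K)=(A_E,K)$ and continuous decoder $D:\{0,1\}\to\{0,1\}$. By \cref{lem:geom-preserves}, the target value satisfies
\[
\Xi_{\sigma_\varepsilon}(E(A,K))=\Xi_{\sigma_\varepsilon}(A,K).
\]
Hence the output identity reads
\[
D\bigl(\Xi_{\sigma_\varepsilon}(A,K)\bigr)=\Xi_\sigma(A,K)
\]
for all diagonal inputs $(A,K)$. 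So it suffices to exhibit two diagonal singleton inputs with the same pseudospectral answer but different spectral answers. That contradicts the fact that $D$ is a function. The finite-query reconstruction data play no role at all.

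For the first separation, fix an interior point $z\in J$ and $K=\{z\}$. Take $A_1$ diagonal with all entries equal to $z$, so that $\sigma(A_1)=\{z\}$, $\Xi_\sigma=0$ and $\Xi_{\sigma_\varepsilon}=0$. Take $A_2$ diagonal with all entries $z+\varepsilon/2$. Then $\inf_j|a_j-z|=\varepsilon/2$, which lies in $(0,\varepsilon]$. By \cref{lem:diag-tests} this gives $\Xi_\sigma(A_2,K)=1$ but $\Xi_{\sigma_\varepsilon}(A_2,K)=0$. Thus both inputs have pseudospectral answer $0$ while their spectral answers differ, and no $D$ can exist.

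For the reverse separation, we get symmetrically
\[
D\bigl(\Xi_\sigma(A,K)\bigr)=\Xi_{\sigma_\varepsilon}(A,K).
\]
Take $A_2$ as above, with spectral answer $1$ and pseudospectral answer $0$. Take $A_3$ with all entries $z+2\varepsilon$. Then the distance $2\varepsilon$ exceeds $\varepsilon$, so both answers are $1$. Now the spectral answers agree while the pseudospectral answers differ, which is again a contradiction. The same argument works for every $\star\in\{\diag,\gen,\Graph\}$, because \cref{lem:geom-preserves} applies to any target representation label and the witnesses are diagonal source instances.

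The only point needing care is that the lemma is applied with $E$ ranging over all finite compositions of neutral elementary encodings. We must also check that the chosen $A_i$ are legitimate elements of $\Omega_{\diag}$; constant diagonals are bounded, hence maximal closed. No further obstacle is expected: the separation is purely semantic. Its key feature is that geometric encodings fix the compact input and preserve both predicates, so the decoder must factor one predicate through the other.
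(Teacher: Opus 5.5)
Your proposal is correct and follows the paper's proof essentially line by line: predicate preservation from \cref{lem:geom-preserves} makes the decoder factor one window predicate through the other, and the same scalar diagonal witnesses $zI$, $(z+\varepsilon/2)I$ and $(z+2\varepsilon)I$ force $D(0)$ (first separation) and $D(1)$ (second separation) to take both values $0$ and $1$. Your remark that the finite-query reconstruction data play no role also matches the paper, whose argument never uses them.
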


\begin{proof}
	Choose
	\[
	x\in\operatorname{int}(J), \qquad K:=\{x\}.
	\]
	
	\smallskip
	\noindent \textbf{Step 1: No geometric reduction from the exact spectral source to a pseudospectral target:}
	Suppose, toward contradiction, that for some
	\[
	\star\in\{\diag,\gen,\Graph\}
	\]
	there is a geometric finite-query transport
	\[
	\mathcal P^\sigma_{J,\diag} \le_{\geom^{\mathrm{CH23}}_{J,\varepsilon},\mathrm{fq}} \mathcal P^{\sigma_\varepsilon}_{J,\varepsilon,\star}.
	\]
	Let $E$	be its geometric encoding and let $D:\{0,1\}\to\{0,1\}$	be its decoder.
	
	Consider the scalar diagonal operators
	\[
	A_0:=xI, \qquad A_1:=\left(x+\frac{\varepsilon}{2}\right)I.
	\]
	For the exact spectral source,
	\[
	\sigma(A_0)=\{x\}, \qquad \sigma(A_1)=\left\{x+\frac{\varepsilon}{2}\right\}.
	\]
	Thus
	\[
	\sigma(A_0)\cap K\neq\varnothing, \qquad \sigma(A_1)\cap K=\varnothing.
	\]
	With the $0/1$ convention this gives
	\[
	\Xi_\sigma(A_0,K)=0, \qquad \Xi_\sigma(A_1,K)=1.
	\]
	
	On the other hand, since \(A_0\) and \(A_1\) are normal scalar operators,
	\[
	\operatorname{dist}(x,\sigma(A_0))=0, \qquad \operatorname{dist}(x,\sigma(A_1))=\frac{\varepsilon}{2}.
	\]
	Hence
	\[
	x\in\sigma_\varepsilon(A_0), \qquad x\in\sigma_\varepsilon(A_1).
	\]
	By \cref{lem:geom-preserves}, the geometric encoding preserves the pseudospectral window predicate. Therefore
	\[
	\Xi_{\sigma_\varepsilon}(E(A_0,K))=0, \qquad \Xi_{\sigma_\varepsilon}(E(A_1,K))=0.
	\]
	The output identity of the transport would now require
	\[
	D(0)=0 \qquad\text{and}\qquad D(0)=1,
	\]
	which is impossible. Therefore
	\[
	\mathcal P^\sigma_{J,\diag} \not\le_{\geom^{\mathrm{CH23}}_{J,\varepsilon},\mathrm{fq}} \mathcal P^{\sigma_\varepsilon}_{J,\varepsilon,\star}.
	\]
	
	\smallskip
	\noindent \textbf{Step 2: No geometric reduction from the pseudospectral source to an exact spectral target:}
	Suppose, toward contradiction, that for some
	\[
	\star\in\{\diag,\gen,\Graph\}
	\]
	there is a geometric finite-query transport
	\[
	\mathcal P^{\sigma_\varepsilon}_{J,\varepsilon,\diag} \le_{\geom^{\mathrm{CH23}}_{J,\varepsilon},\mathrm{fq}} \mathcal P^\sigma_{J,\star}.
	\]
	Let again $E$ be its geometric encoding and $D:\{0,1\}\to\{0,1\}$ its decoder.
	
	Define
	\[
	B_0:=\left(x+\frac{\varepsilon}{2}\right)I, \qquad B_1:=(x+2\varepsilon)I.
	\]
	Then
	\[
	\sigma(B_0)\cap K=\varnothing, \qquad \sigma(B_1)\cap K=\varnothing.
	\]
	Hence the exact spectral target output is
	\[
	\Xi_\sigma(B_0,K)=1, \qquad \Xi_\sigma(B_1,K)=1.
	\]
	By \cref{lem:geom-preserves}, the geometric encoding preserves this exact spectral window predicate, so
	\[
	\Xi_\sigma(E(B_0,K))=1, \qquad \Xi_\sigma(E(B_1,K))=1.
	\]
	
	For the pseudospectral source,
	\[
	\operatorname{dist}(x,\sigma(B_0))=\frac{\varepsilon}{2} \le\varepsilon,
	\]
	so
	\[
	x\in\sigma_\varepsilon(B_0), \qquad \Xi_{\sigma_\varepsilon}(B_0,K)=0.
	\]
	But
	\[
	\operatorname{dist}(x,\sigma(B_1))=2\varepsilon>\varepsilon,
	\]
	so
	\[
	x\notin\sigma_\varepsilon(B_1), \qquad \Xi_{\sigma_\varepsilon}(B_1,K)=1.
	\]
	The output identity of the transport would now require
	\[
	D(1)=0 \qquad\text{and}\qquad D(1)=1,
	\]
	which is again impossible. Therefore
	\[
	\mathcal P^{\sigma_\varepsilon}_{J,\varepsilon,\diag} \not\le_{\geom^{\mathrm{CH23}}_{J,\varepsilon},\mathrm{fq}}
	\mathcal P^\sigma_{J,\star}.
	\]
\end{proof}

The proof book for the finite theorem-block theorem is therefore now complete.

\subsection{The Geometric Two-Source Theorem}
We assemble the within-block reductions and cross-block separations using the abstract finite source-separated theorem-block criterion.

\begin{theorem}[Geometric two-source theorem for the CH23 singleton block]\label{thm:geom-two-source}
	The six-problem ambient \(\mathcal U^{\sharp}_{2,J,\varepsilon}\) carries a \(\geom_{J,\varepsilon}^{\mathrm{CH23}}\)-source-separated finite theorem-block structure at height \(2\) with blocks
	\[
	\mathcal U^{\sigma,\mathrm{sgl}}_J \qquad\text{and}\qquad \mathcal U^{\sigma_\varepsilon,\mathrm{sgl}}_{J,\varepsilon},
	\]
	and sources
	\[
	\Psd, \qquad \Ppd.
	\]
	Consequently,
	\[
	\operatorname{MinDeg}^{\geom_{J,\varepsilon}^{\mathrm{CH23}}}_2 (\mathcal U^{\sharp}_{2,J,\varepsilon}) = 
	\left\{ [\mathcal P^\sigma_{J,\diag}]_{\geom^{\mathrm{CH23}}_{J,\varepsilon}}, [\mathcal P^{\sigma_\varepsilon}_{J,\varepsilon,\diag}]_{\geom^{\mathrm{CH23}}_{J,\varepsilon}} \right\}.
	\]
\end{theorem}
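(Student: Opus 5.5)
The plan is to verify the three conditions (B1)--(B3) of \cref{def:modal-source-separated} for the partition
\[
\mathcal U^{\sharp}_{2,J,\varepsilon}=\mathcal U^{\sigma,\mathrm{sgl}}_J\sqcup \mathcal U^{\sigma_\varepsilon,\mathrm{sgl}}_{J,\varepsilon}
\]
with sources \(\mathbf S_1=\Psd\) and \(\mathbf S_2=\Ppd\), and then invoke \cref{thm:finite-block} with \(\mathfrak m=\geom^{\mathrm{CH23}}_{J,\varepsilon}\), \(k=2\), \(r=2\). Before that, I will record that \(\geom^{\mathrm{CH23}}_{J,\varepsilon}\) is an admissible modality, so that the degree set \(D^{\mathfrak m}_2\) and its order are well defined by \cref{prop:modal-preorder} and \cref{lem:modal-welldefined}. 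This follows the pattern of \cref{prop:calibrated-modalities-admissible}. Identity encodings are allowed (the case \(N=0\)). Geometric encodings are finite compositions of elementary encodings, so they are closed under composition. Decoders are continuous, hence contain identities and compose. Reconstructions are unrestricted, so (A3) holds trivially.

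For (B1), I will cite \cref{thm:height-input} directly: all six problems have \(\SCIG=2\). For (B2), \cref{lem:within-block-geom} supplies exactly the six required reductions. The source reduces to itself by reflexivity, and the diagonal source reduces to the general and graph members of its own block via representation inclusions and unitary relabelings.

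For (B3), I will take \(i\neq j\) and \(\mathbf P\in\mathcal U_j\), and apply \cref{lem:geom-cross-separations}. Its first statement shows that \(\Psd\) does not reduce to \(\mathcal P^{\sigma_\varepsilon}_{J,\varepsilon,\star}\) for any \(\star\in\{\diag,\gen,\Graph\}\). Its second statement shows that \(\Ppd\) does not reduce to \(\mathcal P^{\sigma}_{J,\star}\). Together these cover every cross-block pair.

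Once the three conditions are checked, \cref{thm:finite-block} yields the stated identity for \(\operatorname{MinDeg}\). It also shows that the two source degrees are distinct, and that each member of a block has its own source as the unique minimal degree below it. No step is really hard, since the substantive work sits in \cref{lem:geom-preserves} and \cref{lem:geom-cross-separations}. The only point that needs care is that the encoding class is defined locally, on the CH23 subcategory. I must make sure composites of encodings stay inside \(\mathcal U^\sharp_{2,J,\varepsilon}\), and that the extension by identities outside this subcategory, from \cref{def:geomCH23-modality}, does not break the composition axiom (A2). I would handle this by noting that any composite passing outside the subcategory uses only identity encodings.
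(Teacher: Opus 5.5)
Your proposal is correct and follows the paper's proof exactly: (B1)--(B3) are read off from \cref{thm:height-input}, \cref{lem:within-block-geom} and \cref{lem:geom-cross-separations}, and \cref{thm:finite-block} is applied with $r=2$. The admissibility you verify is implicit in the paper via \cref{prop:calibrated-modalities-admissible}. Your concern about the global identity extension can simply be dropped: reflexivity and transitivity are only ever used among the six members of $\mathcal U^\sharp_{2,J,\varepsilon}$, where only the local encoding classes enter, as the paper notes. Your proposed fix is also not quite right. If a mixed pair $(\mathbf S,\mathbf P)$ with $\mathbf S$ outside and $\mathbf P$ inside admitted the identity, then composing with a relabeling $R_\pi$ would give a non-identity composite for a pair whose source lies outside the subcategory.
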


\begin{proof}
	This follows directly from \cref{thm:height-input}, \cref{lem:within-block-geom} and \cref{lem:geom-cross-separations}. Hence the hypotheses of \cref{thm:finite-block} hold with \(r=2\).
\end{proof}

\begin{corollary}[Raw-principal but geometrically two-source]\label{cor:raw-principal-geom-two-source}
	The CH23 singleton-window spectral/pseudospectral ambient satisfies
	\[
	\left| \operatorname{MinDeg}^{\raw}_2(\mathcal U^{\sharp}_{2,J,\varepsilon}) \right|=1,
	\]
	but
	\[
	\left| \operatorname{MinDeg}^{\geom_{J,\varepsilon}^{\mathrm{CH23}}}_2 (\mathcal U^{\sharp}_{2,J,\varepsilon}) \right|=2.
	\]
	Thus it is raw-principal but geometrically non-principal.
\end{corollary}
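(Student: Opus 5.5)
The corollary follows by combining two results already established. For the raw count, I invoke \cref{cor:raw-principal}. It gives
\[
\operatorname{MinDeg}^{\raw}_2(\mathcal U^{\sharp}_{2,J,\varepsilon})=\{[\Psd]_{\raw}\},
\]
a singleton, so the cardinality is \(1\). The only point to note is that this set is nonempty: it is a singleton by the explicit computation in that corollary, via the principal collapse criterion \cref{cor:principal-collapse}, so no separate existence argument is needed.

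For the geometric count, I invoke \cref{thm:geom-two-source}, which identifies
\[
\operatorname{MinDeg}^{\geom_{J,\varepsilon}^{\mathrm{CH23}}}_2(\mathcal U^{\sharp}_{2,J,\varepsilon})
=\left\{[\Psd]_{\geom^{\mathrm{CH23}}_{J,\varepsilon}},[\Ppd]_{\geom^{\mathrm{CH23}}_{J,\varepsilon}}\right\}.
\]
To conclude the cardinality is exactly \(2\), I still need the two listed degrees to be distinct. This is the one step requiring an argument beyond citation, though it is short. The proof of \cref{thm:finite-block} shows that source degrees of different blocks are pairwise distinct, using (B3). Concretely, equality of the degrees would give \(\Psd\lemfq{\geom^{\mathrm{CH23}}_{J,\varepsilon}}\Ppd\), which contradicts \cref{lem:geom-cross-separations} with \(\star=\diag\).

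Finally, ``raw-principal'' is by definition a single raw-minimal degree lying below every exact member. This holds by \cref{cor:raw-principal}, since \(\Psd\) raw-reduces to all six problems. ``Geometrically non-principal'' follows because there are two incomparable minimal degrees: neither source geometrically reduces to the other, by \cref{lem:geom-cross-separations}. Hence no single minimal degree lies below both \(\Psd\) and \(\Ppd\). I expect no real obstacle; the only care needed is making the distinctness of the two geometric degrees explicit rather than reading it off the set notation.
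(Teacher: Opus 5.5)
Your proposal is correct and takes the same approach as the paper, which proves the corollary simply by combining \cref{cor:raw-principal} and \cref{thm:geom-two-source}. Your explicit check that the two geometric source degrees are distinct, via \cref{lem:geom-cross-separations} with $\star=\diag$, only spells out what the proof of \cref{thm:finite-block} already establishes using (B3).
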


\begin{proof}
	This is a combination of \cref{cor:raw-principal} and \cref{thm:geom-two-source}.
\end{proof}

This is the main calibration phenomenon: the same exact SCI ambient has one minimal source in the raw degree theory and two minimal sources in the geometric degree theory.
The corrected exact-basis problem must therefore be modality-indexed.

\section{Modal Profiles And The Corrected Exact-Basis Problem}\label{sec:corrected-open-problem}
The CH23 theorem block suggests that the invariant of interest is not one degree set but a profile of degree sets across modalities.

\subsection{Refinement Maps And Modal Degree Profiles}
Whenever one modality is stricter than another, there is a canonical quotient map from the stricter degree set to the coarser one.

\begin{definition}[Modal refinement map]\label{def:refinement-map}
	Let \(\mathcal U\) be a family of (typed) SCI computational problems, let \(k\in\bN\), and let \(\mathfrak m\preceq\mathfrak n\). Then there is a canonical map
	\[
	\pi_{\mathfrak m\to\mathfrak n}: D^{\mathfrak m}_k(\mathcal U) \to D^{\mathfrak n}_k(U), \qquad
	[\mathcal P]_{\mathfrak m} \mapsto [\mathcal P]_{\mathfrak n}.
	\]
	It is well defined because \(\mathcal P \equiv_{\mathfrak m,\fq} \mathcal Q\) implies \(\mathcal P \equiv_{\mathfrak n,\fq} \mathcal Q\). Its fibers measure which distinctions are visible in the stricter modality \(\mathfrak m\) but invisible in the coarser modality \(\mathfrak n\).
\end{definition}

The fibers of this map record precisely which distinctions are forgotten when one moves to a coarser semantic world.

\begin{definition}[Raw-collapsed but modally split ambient]\label{def:raw-collapsed-modally-split}
	Let \(\mathcal U\) be \(k\)-bounded in the sense that \(\SCIG(\mathcal P)\le k\) for all \(\mathcal P\in \mathcal U\), and let \(\mathfrak m\preceq\raw\). We say that \(\mathcal U\) is raw-collapsed but \(\mathfrak m\)-split at height \(k\) if there exist
	\[
	\mathcal P, \mathcal Q\in\cO_k(\mathcal U)
	\]
	such that
	\[
	\mathcal P\equiv_{\raw,\fq} \mathcal Q \qquad\text{but}\qquad \mathcal P\not\equiv_{\mathfrak m,\fq} \mathcal Q.
	\]
	Equivalently, some fiber of
	\[
	\pi_{\mathfrak m\to\raw}: D^{\mathfrak m}_k(\mathcal U) \to D^{\raw}_k(\mathcal U)
	\]
	has cardinality at least two.
\end{definition}

The CH23 diagonal spectral/pseudospectral sources manifest a prototype for this point: they are equivalent raw-finitely, but not geometrically equivalent.

\begin{definition}[Basis-refining ambient]\label{def:basis-refining}
	Let \(\mathfrak m\preceq\mathfrak n\). A \(k\)-bounded ambient \(\mathcal U\) is basis-refining from \(\mathfrak n\) to \(\mathfrak m\) at height \(k\) if
	\[
	\left| \operatorname{MinDeg}^{\mathfrak m}_k(\mathcal U) \right| > \left| \operatorname{MinDeg}^{\mathfrak n}_k(\mathcal U) \right|.
	\]
	The CH23 ambient \(\mathcal U^{\sharp}_{2,J,\varepsilon}\) is basis-refining from \(\raw\) to \(\geom_{J,\varepsilon}^{\mathrm{CH23}}\) by \cref{cor:raw-principal-geom-two-source}.
\end{definition}

Basis-refinement is stronger than mere splitting: it says that the number of minimal sources needed to cover the exact layer changes with the modality.

\subsection{The Corrected Modal Exact-Basis Problem}
We can now state the corrected form of the exact-basis problem. The word ``natural'' is external: it refers to mathematical origin, not to whether a family happens to split
modally.

\begin{openproblem}[Modal exact-basis and finite-rooted ambient problem]\label{op:modal-open-problem-1}
	Let \(\mathcal N\) be an externally specified class of natural SCI ambients. ``Natural'' is not defined by raw collapse or modal separation; it is supplied by the mathematical source of the problems, such as spectral decision problems, pseudospectral decision problems, spectral-gap and bottom-classification problems, spectral-measure/type problems, information-regime variants, Koopman spectral tasks, and other SCI families arising in the literature.
	
	For each raw-sound admissible finite-query modality
	\[
	\mathfrak m\preceq\raw,
	\]
	each \(\mathcal U\in\mathcal N\), and each \(k\in\mathbb N\), determine the modal exact degree structure
	\[
	D^{\mathfrak m}_k(\mathcal U) := \cO_k(\mathcal U)/{\equiv_{\mathfrak m,\fq}},
	\]
	construct a modal exact basis
	\[
	B^{\mathfrak m}_k(\mathcal U)\subseteq\cO_k(\mathcal U),
	\]
	and classify it as principal, finitely generated non-principal, or non-finitely generated. If one imposes an additional external naturality condition on admissible sources, then a natural basis may fail to exist inside \(\mathcal U\); in that strengthened sense one may ask whether the ambient must be enlarged.
	
	For modalities not below \(\raw\), the corresponding exact-basis problem should be formulated relative to an implementation height \(\mathrm{SCI}_C\) for which \(\mathfrak m\) is sound.
	
	Moreover, for \(\mathfrak m\preceq\mathfrak n\), study the refinement maps
	\[
	\pi_{\mathfrak m\to\mathfrak n}: D^{\mathfrak m}_k(\mathcal U) \to D^{\mathfrak n}_k(\mathcal U)
	\]
	and classify which natural ambients are raw-principal but modally non-principal, raw-collapsed but modally split, principal in all modalities, or non-principal in all modalities.
	
	Finally, construct finite global rooted ambients
	\[
	\mathcal F \longmapsto \mathcal U^{\sharp,\mathfrak m}_k(\mathcal F)
	\]
	for natural families \(\mathcal F\), with modal minimal-support maps, such that modal witness-space sharpness gives a necessary-and-sufficient criterion for family-pointwise exactness at height \(k\).
\end{openproblem}

\begin{remark}[What is solved here]\label{rem:what-solved}
	This paper solves the first nontrivial finite instance of \cref{op:modal-open-problem-1}: for the CH23 singleton-window spectral/pseudospectral ambient, the raw exact basis is principal, while the CH23 geometric exact basis has two minimal sources. It does not solve/prove the universal covering theorem for all natural SCI families. It supplies the corrected formulation and the first calibrated theorem-block example.
\end{remark}

\begin{remark}[Relation to implementation modalities]\label{rem:implementation-modalities}
	Implementation modalities, such as Type-2, Weihrauch, BSS, Borel, or continuous tower implementations, determine which approximation towers count as implemented solutions. Transport modalities determine which reductions between problems count. A pair consisting of an implementation modality \(C\) and a transport modality \(\mathfrak m\) is sound when
	\[
	\mathcal S \lemfq{\mathfrak m} \mathcal P \quad\Longrightarrow\quad \SCI_C(\mathcal S) \le \SCI_C(\mathcal P).
	\]
	The raw finite-query preorder is sound for raw type-G SCI. For implemented SCI notions, one must impose corresponding regularity and uniformity on encodings, decoders, and reconstruction maps. This is why the modality-indexed formulation is not redundant with the implementation hierarchy.
\end{remark}

\subsection{Semantic Interpretation: Raw Truth, Computational Meaning, And Geometry}
There are three distinct senses of ``meaning'' in the modal SCI picture.

\smallskip

\noindent\textbf{Formal-extensional meaning:}
A raw SCI computational problem
\[
\mathcal P=(\Xi,\Omega,\mathcal Y,\Lambda)
\]
already has formal meaning in the ordinary set-theoretic metatheory: it is a target map
\[
\Xi: \Omega\to \mathcal Y
\]
together with an evaluation interface. A statement such as
\[
\mathcal S \lemfq{\raw} \mathcal P
\]
has a definite Tarskian truth condition in the metatheory in the sense of the notation of truth in \cite{Tarski1944}. In this sense, the raw level is not meaningless. It is a logical framework in which the sentence
\[
``\mathcal S \lemfq{\raw} \mathcal P"
\]
is true exactly when the corresponding encoding, decoder, and finite reconstruction data exist.

This is the sense in which one may say that raw-equivalent problems have the same \textit{raw finite-information truth value}: each can be simulated from the other inside
the raw framework.

\smallskip

\noindent\textbf{Modal or implemented meaning:}
A modality $\mathfrak m$ adds semantic structure to the raw syntax by specifying which encodings, decoders, and finite transcript reconstructions count as admissible. For example, the TTE modality interprets the raw problem inside represented spaces and requires computable realizers; the geometric modality interprets the same raw problem inside a category of natural operator-theoretic transformations.

Thus it does not follow here that TTE is a semantics of all set theory. It is a represented-space semantics for the raw SCI problem language. It supplies an implementation world in which a raw map is accepted only if it has a uniform computable realizer.

In this sense, modalities are semantic enrichments of the raw SCI syntax. They do not replace the raw problem; they specify which interpretations of its maps are admissible.

\smallskip

\noindent \textbf{Modal degree profile as computational meaning:}
The CH23 spectral/pseudospectral example shows that two problems may be equivalent in the raw modality but distinct in a stricter modality:
\[
\mathcal P^\sigma_{J,\diag} \equiv_{\raw,\fq}
\mathcal P^{\sigma_\varepsilon}_{J,\varepsilon,\diag},
\]
while, under the CH23 geometric modality,
\[
\mathcal P^\sigma_{J,\diag} \not\equiv_{\geom,\fq}
\mathcal P^{\sigma_\varepsilon}_{J,\varepsilon,\diag}.
\]
Therefore the raw degree does not exhaust computational meaning. The more stable invariant is the modal degree profile
\[
\mathfrak m \longmapsto [\mathcal P]_{\mathfrak m}.
\]
This profile records which equivalences survive when one changes the semantic world.

\smallskip

This interpretation is close to Carnap's idea that questions become internal and truth-evaluable only after choosing a linguistic framework, while the choice of framework
itself is a practical or methodological decision, see \cite{Carnap1950}. It is also compatible with Quine's warning that meaning should not be reduced to isolated analytic equivalence or statement-by-statement reduction, see \cite{quine2000two}. In the present setting, the raw framework gives one internal notion of equivalence, but stricter modalities reveal distinctions hidden by that raw equivalence.

So the modal SCI conclusion is not the ``raw problems have no meaning.'' The correct conclusion is more that ``raw equivalence is only one framework-internal meaning; computational meaning is modality-relative.''

\section{Outlook}
The corrected modal exact-basis problem suggests three immediate directions. First, one should analyze information-regime ambients, where raw non-reductions are expected to be robust under all stricter raw-sound modalities.  Second, one should formulate Koopman geometric modalities (as these represent an important computational family in the SCI literature) and determine which restricted Koopman families admit finite modal exact bases. Promising candidate problem families are the Koopman $L^2$ case from \cite{colbrook2024limits}, the $L^p$, $1<p<\infty$, case from \cite{sorg2025solvability} and the RKHS case from \cite{boulle2025convergent}. Third, beyond the trace characterization and strictness result proved in \cref{subsec:TTEfqtransportsW}, one should compare the TTE finite-query exact-degree structure with Weihrauch-theoretic ranks, cylinders, jumps, and witness-uniformity phenomena. In particular, it remains open whether natural SCI exact sources have recognizable Weihrauch normal forms after forgetting interface data, and which modal distinctions disappear under this forgetful passage. These directions are left for future work.

\textbf{Acknowledgments } The author thanks Patrick Uftring for interesting discussions on the correct formulation of \cref{def:regularity-controlled-modalities}(v).

\textbf{Funding } This research did not receive any specific grant from funding agencies in the public, commercial, or not-for-profit sectors.

\textbf{Statement } During the preparation of this work the author used UniBwM-ChatGPT5.5 in order to improve the language in the abstract and introduction. After using this tool, the author reviewed and edited the content as needed and takes full responsibility for the content of the published article.

\textbf{Conflict of interest } The author declares no conflict of interest.

\bibliographystyle{alpha}  
\bibliography{bib.bib}

\newcommand{\etalchar}[1]{$^{#1}$}
\begin{thebibliography}{BACH{\etalchar{+}}15}

\bibitem[BACH{\etalchar{+}}15]{ben2015computing}
Jonathan Ben-Artzi, Matthew~J Colbrook, Anders~C Hansen, Olavi Nevanlinna, and
  Markus Seidel.
\newblock Computing spectra--on the solvability complexity index hierarchy and
  towers of algorithms.
\newblock {\em arXiv preprint arXiv:1508.03280}, 2015.

\bibitem[BCC25]{boulle2025convergent}
Nicolas Boull{\'e}, Matthew~J Colbrook, and Gustav Conradie.
\newblock Convergent methods for koopman operators on reproducing kernel
  hilbert spaces.
\newblock {\em arXiv preprint arXiv:2506.15782}, 2025.

\bibitem[BGP21]{BrattkaGherardiPauly2021}
Vasco Brattka, Guido Gherardi, and Arno Pauly.
\newblock Weihrauch complexity in computable analysis.
\newblock In {\em Handbook of computability and complexity in analysis}, pages
  367--417. Springer, 2021.

\bibitem[BP18]{BrattkaPauly2018}
Vasco Brattka and Arno Pauly.
\newblock On the algebraic structure of {W}eihrauch degrees.
\newblock {\em Log. Methods Comput. Sci.}, 14(4):Paper No. 4, 36, 2018.

\bibitem[Car50]{Carnap1950}
Rudolf Carnap.
\newblock Empiricism, semantics, and ontology.
\newblock {\em Revue internationale de philosophie}, pages 20--40, 1950.

\bibitem[CH23]{ColbrookHansen22}
Matthew~J. Colbrook and Anders~C. Hansen.
\newblock The foundations of spectral computations via the solvability
  complexity index hierarchy.
\newblock {\em J. Eur. Math. Soc. (JEMS)}, 25(12):4639--4718, 2023.

\bibitem[CMS24]{colbrook2024limits}
Matthew~J Colbrook, Igor Mezi{\'c}, and Alexei Stepanenko.
\newblock Limits and powers of koopman learning.
\newblock {\em arXiv preprint arXiv:2407.06312}, 2024.

\bibitem[Han11]{hansen2011solvability}
Anders Hansen.
\newblock On the solvability complexity index, the $\varepsilon$-pseudospectrum
  and approximations of spectra of operators.
\newblock {\em Journal of the American Mathematical Society}, 24(1):81--124,
  2011.

\bibitem[Kec95]{Kechris1995}
Alexander~S. Kechris.
\newblock {\em Classical descriptive set theory}, volume 156 of {\em Graduate
  Texts in Mathematics}.
\newblock Springer-Verlag, New York, 1995.

\bibitem[Qui00]{quine2000two}
Willard Van~Orman Quine.
\newblock Two dogmas of empiricism.
\newblock {\em Perspectives in the Philosophy of Language}, pages 189--210,
  2000.

\bibitem[Sor25]{sorg2025solvability}
Christopher Sorg.
\newblock Residual sci upper bounds and lower witnesses for koopman approximate
  point spectra in ${L}^p$ for $1<p<\infty$: Extended version.
\newblock {\em arXiv preprint arXiv:2509.16016v3}, 2025.

\bibitem[Sor26a]{Sorg26Foundations}
Christopher Sorg.
\newblock Foundational analysis of the solvability complexity index: The
  weihrauch-sci intermediate hierarchy.
\newblock {\em arXiv preprint arXiv:2603.18955v2}, 2026.

\bibitem[Sor26b]{Sorg26Witness}
Christopher Sorg.
\newblock From witness-space sharpness to family-pointwise exactness for the
  solvability complexity index.
\newblock {\em arXiv preprint arXiv:2604.12750v2}, 2026.

\bibitem[Tar44]{Tarski1944}
Alfred Tarski.
\newblock The semantic conception of truth and the foundations of semantics.
\newblock {\em Philos. and Phenomenol. Res.}, 4:341--376, 1944.

\bibitem[TE05]{trefethen2020spectra}
Lloyd~N. Trefethen and Mark Embree.
\newblock {\em Spectra and pseudospectra}.
\newblock Princeton University Press, Princeton, NJ, 2005.
\newblock The behavior of nonnormal matrices and operators.

\bibitem[Wei00]{weihrauch2000computable}
Klaus Weihrauch.
\newblock {\em Computable analysis: an introduction}.
\newblock Springer Science \& Business Media, 2000.

\end{thebibliography}

\end{document}